  \newtheorem{theorem}{Theorem}
  \newtheorem{corollary}{Corollary}
  \newtheorem{proposition}{Proposition}
  \newtheorem{lemma}{Lemma}
\newtheorem{question}{Question}
\DeclareMathOperator{\lcm}{lcm}
\begin{document}

\title{The Schinzel-Szekeres function}

\begin{abstract}
We derive asymptotic estimates for distribution functions related to the Schinzel-Szekeres function. 
These results are then used in three different applications: the longest simple path in the divisor graph,
a problem of Erd\H{o}s about a sum of reciprocals, and the small sieve of Erd\H{o}s and Ruzsa.
\end{abstract}

\author{Andreas Weingartner}
\address{ 
Department of Mathematics,
351 West University Boulevard,
 Southern Utah University,
Cedar City, Utah 84720, USA}
\email{weingartner@suu.edu}

\maketitle

\section{Introduction}

The Schinzel-Szekeres function is defined as
\begin{equation*}
  F(n)=
  \begin{cases}
    1  & \quad (n=1) \\
    \max\{d\,P^-(d) : d|n, \ d>1\} & \quad (n\ge 2),
  \end{cases}
\end{equation*}
where $P^-(d)$ denotes the smallest prime factor of $d>1$ and $P^-(1):=\infty$.
This function appears  implicitly in \cite{SS}.
It plays a role in various applications, 
such as a problem of Erd\H{o}s \cite{Erd} considered by Schinzel and Szekeres \cite{SS}, 
the small sieve of Erd\H{o}s and Ruzsa \cite{Ru}, 
the distribution of divisors \cite{ EDD1, Ten86, PDD}, and the length of the longest simple path in the divisor graph \cite{AEDD, Ten95}. 
For the corresponding counting function
$$
A(x) = |\{n\ge 1: F(n)\le x\}|,
$$
Schinzel and Szekeres \cite{SS} showed that $A(x) = o(x)$ as $x\to \infty$. 
Ruzsa \cite{Ru} found that there is a constant $c>0$ such that $A(x) \le \frac{x}{\log^c x}$. 
Tenenbaum \cite{Ten86, Ten95} improved this to $A(x)=\frac{x}{\log x} (\log\log x)^{O(1)}$
and Saias \cite{AEDD} obtained $A(x) \asymp \frac{x}{\log x}$.

\begin{theorem}\label{thm1}
For $x\ge 2$, we have
\begin{equation*}\label{Aasymp}
A(x)=\frac{a x}{\log x}\left(1+O\left(\frac{1}{\log x}\right)\right),
\end{equation*}
where the constant  $a =1.53796...$ is given by
\begin{equation*}\label{adef}
 a=\frac{1}{1-e^{-\gamma}} \left( 1 -\gamma +\int_1^\infty A(t) g(t) \frac{dt}{t^2}  \right),
\end{equation*}
$\gamma$ is Euler's constant and
\begin{equation*}
 g(t) =\biggl(\sum_{p\le t }\frac{\log p}{p-1} +\gamma - \log t  \biggr)\prod_{p\le t} \left(1-\frac{1}{p}\right) .
\end{equation*}
\end{theorem}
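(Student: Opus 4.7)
The plan is to prove the asymptotic inductively by deriving a self-referential identity for $A$ from the definition of $F$, and then inverting it using Mertens-type estimates combined with Saias's bound $A(x)\asymp x/\log x$.

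\emph{Decomposition along $P^-$.} For $n\ge 2$ with $F(n)\le x$, write $n=p^a m$ with $p=P^-(n)$, $a\ge 1$, and $P^-(m)>p$. Every divisor $d\mid n$ is either a divisor of $m$ (controlled by $F(m)\le x$) or of the form $p^b d'$ with $d'\mid m$ and $1\le b\le a$; in the latter case $P^-(d)=p$ and $d P^-(d)=p^{b+1}d'\le p^{a+1}m$. Consequently
\[
F(n)\le x\iff F(m)\le x\text{ and }p^{a+1}m\le x,
\]
so summing yields
\[
A(x)=1+\sum_p\sum_{a\ge 1}N_a(x,p),\quad N_a(x,p):=\#\{m:P^-(m)>p,\ F(m)\le x,\ m\le x/p^{a+1}\}.
\]
The terms with $a\ge 2$ contribute $O(x/\log^2 x)$ by Saias's bound and a routine series estimate; the main work is the evaluation of $\sum_p N_1(x,p)$.

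\emph{Mertens-type evaluation.} The key analytic step is a Mertens-type identity of the form
\[
N_1(x,p)=A(x/p^2)\prod_{q\le p}\!\left(1-\frac 1q\right)+(\text{relative error }O(1/\log x))
\]
uniformly in $p\le\sqrt x$. The Mertens product appears from the classical sieve of Eratosthenes on the condition $P^-(m)>p$, while the combination of $F(m)\le x$ with $P^-(m)>p$ and $m\le x/p^2$ reduces, up to a controllable error, to the clean constraint $F(m)\le x/p^2$; this reduction is obtained by applying the structural equivalence of the previous paragraph to $m$ itself and using Saias's bound $A(t)\ll t/\log t$. Plugging in the inductive hypothesis $A(y)=ay/\log y+O(y/\log^2 y)$ and using Mertens's theorems
\[
\prod_{p\le y}\!\left(1-\frac 1p\right)=\frac{e^{-\gamma}}{\log y}\!\left(1+O\!\left(\frac 1{\log y}\right)\right),\qquad \sum_{p\le y}\frac{\log p}{p-1}=\log y-\gamma+o(1),
\]
an Abel summation rearranges $\sum_{p\le\sqrt x}N_1(x,p)$ into three pieces: a self-referential contribution $e^{-\gamma}A(x)$, a constant-term contribution of size $\frac{x}{\log x}(1-\gamma)$, and the integral $\frac{x}{\log x}\int_1^{\sqrt x}A(t)g(t)\frac{dt}{t^2}$ encoding the fine Mertens remainder, with $g(t)$ defined exactly as in the theorem. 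The decay $g(t)=O(1/\log^2 t)$ (in fact much faster by PNT) ensures convergence.

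\emph{Solving.} Moving $e^{-\gamma}A(x)$ to the left-hand side gives
\[
(1-e^{-\gamma})A(x)=\frac{x}{\log x}\!\left(1-\gamma+\int_1^\infty A(t)g(t)\frac{dt}{t^2}\right)+O\!\left(\frac{x}{\log^2 x}\right),
\]
where the tail of the integral beyond $\sqrt x$ is absorbed into the error via Saias's bound and the rapid decay of $g$. Dividing by $1-e^{-\gamma}$ yields $A(x)=ax/\log x+O(x/\log^2 x)$ with $a$ as claimed. The central obstacle is the Mertens-type estimate for $N_1(x,p)$: the condition $F(m)\le x$ is not a classical sieve condition, and for small $p$ it is strictly more restrictive than the mere size bound $m\le x/p^2$. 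Securing the relative error $O(1/\log x)$ uniformly in $p\le\sqrt x$ is the most delicate part and is what permits the present refinement of Saias's $A(x)\asymp x/\log x$ to an asymptotic formula with an explicit constant.
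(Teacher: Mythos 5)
Your structural identity $F(p^a m)=\max\bigl(F(m),\,p^{a+1}m\bigr)$ for $n=p^a m$, $P^-(m)>p$, is correct, and so is the resulting decomposition $A(x)=1+\sum_p\sum_{a\ge 1}N_a(x,p)$. But two of your subsequent claims are false, and the second one is fatal.

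First, the terms with $a\ge 2$ are \emph{not} $O(x/\log^2 x)$. Observe that $N_a(x,p)=A\bigl(x/p^{a+1},\,p^{a+1},\,p\bigr)$ in the paper's two-parameter notation; by Theorem~\ref{thmA} this is $\sim a_{p^{a+1},p}\,x/(p^{a+1}\log x)$ for fixed $(p,a)$, and $a_{p^{a+1},p}\asymp \Pi(p)\,a\log p$ by~\eqref{ateq4}. Already $N_2(x,2)=A(x/8,8,2)\sim a_{8,2}\,x/(8\log x)$ is a positive constant times $x/\log x$, so the $a\ge 2$ block changes the leading constant. You cannot discard it.

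Second, and more fundamentally, the claimed Mertens-type identity $N_1(x,p)=A(x/p^2)\Pi(p)\bigl(1+O(1/\log x)\bigr)$ is simply wrong. Again $N_1(x,p)=A(x/p^2,p^2,p)$, and Theorem~\ref{thmA} together with~\eqref{ateq2} gives, for fixed $p$ and $x\to\infty$,
\begin{equation*}
N_1(x,p)\sim\frac{C\,\Pi(p)\,(1+\log p)\,x/p^2}{\log x},\qquad\text{while}\qquad A(x/p^2)\,\Pi(p)\sim\frac{a\,\Pi(p)\,x/p^2}{\log x}.
\end{equation*}
The ratio is $C(1+\log p)/a$, which does not tend to $1$: for $p=2$ it is already about $2.5$, and it grows with $p$. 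The source of the discrepancy is exactly the step you flagged as delicate. On the set $\{m\le x/p^2:\ P^-(m)>p\}$, the constraint $F(m)\le x$ is genuinely weaker than $F(m)\le x/p^2$ (it permits the divisor ratio $F(m)/m$ to be as large as $x/m\ge p^2$), and the gap between the two is not a relative $O(1/\log x)$; it is a factor that depends on $p$. This is precisely the information that the second parameter $y$ in $D(x,y,z)$ and $A(x,y,z)$ records, and the paper remarks explicitly that ``the parameters $y$ and $z$ are both needed when trying to obtain sharp estimates for $A(x)$.'' Because the self-referential equation you write down rests on this false identity, the constant it produces is not $a$, and the argument cannot be repaired by tightening the error analysis.

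The paper's route is structurally different. It does not peel off the full power of $P^-(n)$; instead it uses the multiplicative factorization behind Lemma~\ref{lem0} ($m=nr$ with $n\in\mathcal{D}_{y,z}$ and $P^-(r)>ny$) to obtain a functional equation for $D(x,y,z)$, solves it by Laplace transforms to get Theorem~\ref{AF}, and only then passes to $A(x,y,z)$ via the decomposition $n=mpr$ in the proof of Theorems~\ref{thmAD} and~\ref{thmA}, carrying both parameters $(y,z)$ throughout. Theorem~\ref{thm1} is the specialization $y=z=1$, and the constant $a=a_{1,1}$ emerges from the integral identity~\eqref{adef1}, $a_{y,z}=\int_1^\infty c_{yt,z}\,dt/t^2$, not from the naive sieve heuristic you propose.
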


\subsection{The longest simple path in the divisor graph}

The divisor graph of order $n$ consists of vertices $1,2,\ldots, n$ and an edge between vertices $j, k$ if and only if $j|k$ or $k|j$. 
Let $f(n)$ be the maximal number of vertices in a simple path of the divisor graph of order $n$. 
Improving on earlier work by Pollington \cite{Pol}, Pomerance \cite{Pom} and Tenenbaum \cite{Ten95}, Saias \cite{AEDD,EGD5} showed that $f(n) \asymp \frac{n}{\log n}$ and $f(n) \ge A(n/2)>0.37 \frac{ n}{\log n}$ for $n\ge n_0$.
With Theorem \ref{thm1}, we obtain the following lower bound.
\begin{corollary}
 For all sufficiently large $n$,
$$
f(n) > 0.76898 \frac{n}{\log n}.
$$
\end{corollary}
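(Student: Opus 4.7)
The plan is to combine the lower bound $f(n) \ge A(n/2)$ (valid for $n \ge n_0$, attributed to Saias in the text) with the sharp asymptotic for $A$ supplied by Theorem~\ref{thm1}, and simply read off the constant.

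First, I would apply Theorem~\ref{thm1} with $x = n/2$. This gives
\begin{equation*}
A(n/2) = \frac{a \cdot n/2}{\log(n/2)}\left(1 + O\!\left(\frac{1}{\log n}\right)\right).
\end{equation*}
Since $\log(n/2) = \log n - \log 2 = \log n \bigl(1 + O(1/\log n)\bigr)$, this simplifies to
\begin{equation*}
A(n/2) = \frac{a}{2} \cdot \frac{n}{\log n}\left(1 + O\!\left(\frac{1}{\log n}\right)\right).
\end{equation*}
Thus $A(n/2) / (n/\log n) \to a/2$ as $n \to \infty$.

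Combining with Saias's bound $f(n) \ge A(n/2)$, we obtain $f(n)/(n/\log n) \ge a/2 + o(1)$. Since the numerical value $a = 1.53796\ldots$ yields $a/2 = 0.76898\ldots$, which is strictly larger than $0.76898$, the claimed inequality $f(n) > 0.76898 \, n/\log n$ holds for all sufficiently large $n$.

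There is no real obstacle here: the content of the corollary is entirely in Theorem~\ref{thm1} and in Saias's prior inequality $f(n) \ge A(n/2)$. The only thing to be mildly careful about is that the stated threshold $0.76898$ matches $a/2$ only up to the precision shown, so one must verify that the truncation $1.53796$ is a lower bound for $a$ (i.e.\ that the next digit is nonzero in the right direction), ensuring the strict inequality rather than $\ge$.
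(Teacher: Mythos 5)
Your proof is correct and follows exactly the paper's intended route: apply Theorem~\ref{thm1} at $x = n/2$ and insert the result into Saias's inequality $f(n) \ge A(n/2)$, noting $a/2 = 0.76898\ldots > 0.76898$. The paper leaves this implicit (stating only ``With Theorem~\ref{thm1}, we obtain the following lower bound''), so you have simply written out the same short argument.
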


\subsection{A problem of Erd\H{o}s}

Erd\H{o}s \cite{Erd} considered the question of how large $\sum_{n\in \mathcal{S}} \frac{1}{n}$ can be for a set $\mathcal{S}$ of integers up to $x$ 
with the property 
\begin{equation}\label{lcmprop}
 m,n \in \mathcal{S} \text{ and }   n \neq  m  \ \Rightarrow \ \lcm(m,n) > x.
\end{equation}
Define
$$
R(x) = \max_{\mathcal{S}} \sum_{n\in \mathcal{S}} \frac{1}{n},
$$
where the maximum is taken over all sets of integers $\mathcal{S} \subset \{2,3,\ldots, \lfloor x\rfloor \}$ that have the $\lcm$ property \eqref{lcmprop}. 

Erd\H{o}s \cite{Erd}  proposed the problem to show that $R(x) <2$ and conjectured \cite{SS} that $R(x)<1+o(1)$ as $x\to \infty$. Lehman \cite{Leh} improved this to $R(x) < \frac{7}{6}+\frac{1}{6x}$. 
It seems that there are only two known cases where $R(x)>1$, namely $R(5)=\frac{31}{30}$ from $\mathcal{S}=\{2,3,5\}$,
and $R(11)=\frac{4699}{4620}$ from $\mathcal{S}=\{3,4,5,7,11\} $. 
Schinzel and Szekeres \cite{SS} showed that $R(x) \le \frac{31}{30}$, with equality being attained only at $x=5$. 
Moreover, they proved that $R(x)< c+o(1)$, where $c=1.017262...$.
They \cite[Theorem 3]{SS} also established the lower bound $R(x) >1 - o(1)$, by way of $\mathcal{S}=\mathcal{B}(x)$, 
where
$$ \mathcal{B}(x) = \{ n\le x : F(n)> x \text{ and } F(d)\le x \text{ for all } d|n, d<n\}.$$
It is not difficult to see  \cite[Proof of Thm. 3]{SS} that $\mathcal{B}(x)$ has the $\lcm$ property \eqref{lcmprop}. 

\begin{theorem}\label{corBrec}
For $x\ge 2$, we have
$$
\sum_{n\in \mathcal{B}(x)} \frac{1}{n} =1- \frac{\delta}{\log x} + O\left(\frac{1}{(\log x)^{3/2}}\right),
$$
where $\delta:= a+\gamma-1-\beta =0.560...$, the constant $a$ is as in Theorem \ref{thm1} and $\beta$ is given by \eqref{betadef}. 
More precisely, $0.560374<\delta < 0.560579$.  
\end{theorem}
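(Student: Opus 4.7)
The plan is to leverage the disjoint-union identity
\begin{equation*}
\{n \le x : F(n) > x\} \;=\; \bigsqcup_{b \in \mathcal{B}(x)} \{n \le x : b \mid n\},
\end{equation*}
which I would prove by combining the extremal definition of $\mathcal{B}(x)$ with the lcm property \eqref{lcmprop}. For existence: any $n \le x$ with $F(n) > x$ admits a smallest divisor $d$ satisfying $F(d) > x$, and by minimality every proper divisor $d'$ of $d$ has $F(d') \le x$, so $d \in \mathcal{B}(x)$. For uniqueness: if $b_1, b_2 \in \mathcal{B}(x)$ both divide $n \le x$, then $\lcm(b_1, b_2)$ divides $n$ and is at most $x$, which by \eqref{lcmprop} forces $b_1 = b_2$.

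Summing $1/n$ over both sides and inserting $\sum_{k \le x/b} 1/k = \log(x/b) + \gamma + O(b/x)$ produces a linear relation
\begin{equation*}
\log x + \gamma - H(x) + O(1/x) \;=\; S(\log x + \gamma) - T + E,
\end{equation*}
where $S = \sum_{b \in \mathcal{B}(x)} 1/b$, $T = \sum_{b \in \mathcal{B}(x)} (\log b)/b$, $H(x) = \sum_{n : F(n) \le x} 1/n$, and $E$ is an Euler--Maclaurin remainder. To extract $S$ at the required precision, I would derive a sharp asymptotic $H(x) = \log x + h_0 + O(1/\log^{3/2} x)$ by partial summation against Theorem~\ref{thm1} and the Mertens-type weight $g(t)$, together with a parallel expansion $T = \log x + t_0 + O(1/\log^{3/2} x)$ obtained by analogous partial summation over $\mathcal{B}(x)$ combined with the companion identity $\lfloor x \rfloor - A(x) = \sum_{b \in \mathcal{B}(x)} \lfloor x/b \rfloor$. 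Substituting and solving yields $S = 1 + (t_0 - h_0)/\log x + O(1/\log^{3/2} x)$, and the identification $h_0 - t_0 = a + \gamma - 1 - \beta = \delta$ completes the proof.

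The main obstacle is controlling $E$ to the precision $O(1/\log^{3/2} x)$. Since $|\mathcal{B}(x)|$ is typically of order $x$ (consistent with $S \asymp 1$), the naive bound $E = O(|\mathcal{B}(x)|/x) = O(1)$ is far too crude. Sharpening $E$ requires the refined Euler--Maclaurin expansion $\sum_{k \le y} 1/k = \log y + \gamma + 1/(2y) - \int_y^\infty (\{u\} - 1/2) u^{-2}\,du$ and a companion estimate for $|\mathcal{B}(x)|$ inherited from Theorem~\ref{thm1} via the partition identity, so that the contributions of $\sum_b \{x/b\}/x$ can be extracted rather than absorbed into an error. Once this error analysis is secured, the numerical bounds $0.560374 < \delta < 0.560579$ reduce to evaluating $a$ (from Theorem~\ref{thm1}) and $\beta$ (from \eqref{betadef}) to sufficient decimal precision.
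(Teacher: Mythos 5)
Your opening observation is sound: the partition of $\{n \le x\}$ into $\mathcal{A}(x)$ and the multiples of members of $\mathcal{B}(x)$ is exactly the identity the paper starts from (in the counting form $\lfloor x\rfloor = A(x) + \sum_{n\in\mathcal{B}(x)}\lfloor x/n\rfloor$). But after that point there is a genuine gap. Weighting the partition by $1/n$ and using $\sum_{k\le y} 1/k = \log y + \gamma - \{y\}/y + O(1/y)$ (you dropped the $\{y\}/y$ term, and it is not negligible) produces, after dividing by $b$ and summing, precisely the quantity $-\frac{1}{x}\sum_{b\in\mathcal{B}(x)}\{x/b\}$ inside your $E$. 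That sum of fractional parts is the whole analytic content of the theorem: the paper isolates it as Theorem \ref{thmBfrac}, proves $\sum_{n\in\mathcal{B}(x)}\{x/n\} = (1-\gamma+\beta)x/\log x + O(x/(\log x)^{3/2})$, and then $\delta = a+\gamma-1-\beta$ falls out immediately from $\lfloor x\rfloor = A(x) + x\sum 1/b - \sum\{x/b\}$ and Theorem \ref{thm1}. Your proposal relegates this to ``error analysis'' and suggests a ``companion estimate for $|\mathcal{B}(x)|$'' will let it be extracted; but $|\mathcal{B}(x)|$ alone cannot control $\sum_b\{x/b\}$, since the fractional parts range over $[0,1)$ and their equidistribution is exactly what must be quantified. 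The paper does this by sorting $\mathcal{B}(x)$ by smallest prime factor $p$, using the key Lemma \ref{BpAp} ($|\{n\in\mathcal{B}(x): P^-(n)=p\}| = A(x/p,p,p)$), integrating by parts against the two-parameter count $A(x/y,y,p-1)$ via Theorem \ref{thmA}, and handling the tail of large primes with de la Vall\'ee Poussin's estimate for $\sum_{p\le x}\{x/p\}$. None of this machinery — which is where Theorems \ref{thmAD} and \ref{thmA} and the parameter $z$ in $A(x,y,z)$ earn their keep — appears in your outline.

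Two further concrete errors. First, $|\mathcal{B}(x)|\asymp x/\log x$, not $\asymp x$; the members of $\mathcal{B}(x)$ lie in $(\sqrt{x},x]$ and the bulk are primes, so ``$S\asymp 1$'' forces them to cluster near $x$, not to be dense. Second, the claimed expansions $H(x) = \log x + h_0 + O(1/(\log x)^{3/2})$ and $T = \log x + t_0 + O(1/(\log x)^{3/2})$ cannot hold with leading coefficient $1$: from the exact $1/n$-weighted identity one finds $H(x)$ and $T$ share the same leading coefficient $h_1$, and since $\mathcal{A}(x)$ contains all $n\le\sqrt{x}$ plus a proper-density fraction of $(\sqrt{x},x/2]$, one gets $\tfrac{1}{2} < h_1 < 1$. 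These are correctable, but the underlying strategy — trading the single quantity $\sum_b\{x/b\}$ for two new quantities $H(x)$ and $T$ that each need to be estimated to the same precision by the same machinery — is strictly more work than the paper's route, and no method for the new estimates is given.
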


Theorem \ref{corBrec} improves the estimate $ 1+O(\frac{1}{\log x})$ due to Saias \cite[Remark 1]{AEDD} and yields the improved lower bound
$$
R(x) > 1- \frac{\delta +o(1)}{\log x}, \quad \delta=0.560...
$$
We can improve this further by making a small change to $\mathcal{B}(x)$, based on the two known instances where $R(x)>1$.
Let $\mathcal{B}'(x)$ be the set obtained from $\mathcal{B}(x)$, by replacing every $n\in (\frac{x}{6},\frac{x}{5}]\cap \mathcal{B}(x)$
by $\{2n,3n,5n\}$, and every $n\in (\frac{x}{12},\frac{x}{11}]\cap \mathcal{B}(x)$ by $\{3n,4n,5n,7n,11n\}$.
Then $\mathcal{B}'(x) \subset [1,x]$ and $\mathcal{B}'(x)$ has the $\lcm$ property \eqref{lcmprop}.

\begin{theorem}\label{Rcor}
For $x\ge 2$, we have
$$
R(x) \ge  \sum_{n\in \mathcal{B}'(x)} \frac{1}{n} = 1- \frac{\kappa}{\log x} + O\left(\frac{1}{(\log x)^{3/2}}\right), $$
where $\kappa =0.543...$ is given by \eqref{kappadef}. More precisely, $0.543595<\kappa<0.543804$.
\end{theorem}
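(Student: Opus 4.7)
The plan is to reduce $\sum_{n\in\mathcal{B}'(x)}1/n$ to the sum over $\mathcal{B}(x)$ (handled by Theorem~\ref{corBrec}) plus two correction sums restricted to short intervals, and then to establish a short-interval version of Theorem~\ref{corBrec} to evaluate those corrections. First I would verify that $\mathcal{B}'(x)\subset[1,x]$ and satisfies \eqref{lcmprop}. The size constraint is immediate from the defining ranges $(x/6,x/5]$ and $(x/12,x/11]$ together with the multipliers $\{2,3,5\}$ and $\{3,4,5,7,11\}$. For the lcm-property, two distinct $n,n'\in\mathcal{B}(x)$ already satisfy $\lcm(n,n')>x$, so any multiples $an$, $bn'$ have $\lcm(an,bn')\ge\lcm(n,n')>x$; within a single cluster, a case-by-case check of pairwise lcms confirms the property, using $n>x/6$ in the first case and $n>x/12$ in the second.

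Next comes the algebraic reduction. Replacing one $n$ by $\{2n,3n,5n\}$ changes $1/n$ to $\frac{1}{2n}+\frac{1}{3n}+\frac{1}{5n}=\frac{31}{30n}$, a net gain of $\frac{1}{30n}$; the analogous computation for the second cluster gives a gain of $\frac{79}{4620n}$. Hence
\begin{equation*}
\sum_{n\in\mathcal{B}'(x)}\frac{1}{n}=\sum_{n\in\mathcal{B}(x)}\frac{1}{n}+\frac{S_1(x)}{30}+\frac{79\,S_2(x)}{4620},
\end{equation*}
where $S_1(x):=\sum_{n\in\mathcal{B}(x),\,x/6<n\le x/5}1/n$ and $S_2(x):=\sum_{n\in\mathcal{B}(x),\,x/12<n\le x/11}1/n$.

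The heart of the argument is the short-interval asymptotic
\begin{equation*}
\sum_{\substack{n\in\mathcal{B}(x)\\ \alpha x<n\le\beta x}}\frac{1}{n}=\frac{\sigma(\alpha,\beta)}{\log x}+O\bigl((\log x)^{-3/2}\bigr),
\end{equation*}
uniform for fixed $0<\alpha<\beta<1$, with an explicit constant $\sigma(\alpha,\beta)$. I would obtain this by mirroring the proof of Theorem~\ref{corBrec}: decompose the indicator of $\mathcal{B}(x)$ via the defining condition on $F$, track the localization $\alpha x<n\le\beta x$ through the resulting convolution/partial-summation identity, and invoke Theorem~\ref{thm1} for the required asymptotics of $A$. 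Specializing to $(\alpha,\beta)=(1/6,1/5)$ and $(1/12,1/11)$ produces constants $\sigma_1,\sigma_2$ with $S_i(x)=\sigma_i/\log x+O((\log x)^{-3/2})$.

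Assembling everything via Theorem~\ref{corBrec} gives
\begin{equation*}
\sum_{n\in\mathcal{B}'(x)}\frac{1}{n}=1-\frac{\delta-\sigma_1/30-79\sigma_2/4620}{\log x}+O\bigl((\log x)^{-3/2}\bigr),
\end{equation*}
identifying $\kappa=\delta-\sigma_1/30-79\sigma_2/4620$; this will be the content of \eqref{kappadef}. The numerical enclosure $0.543595<\kappa<0.543804$ then follows by combining the rigorous bounds on $\delta$ from Theorem~\ref{corBrec} with explicit numerical bounds on $\sigma_1$ and $\sigma_2$ coming from their integral formulas. The main obstacle is the uniform short-interval asymptotic above, with the $O((\log x)^{-3/2})$ remainder; once that estimate is secured, the remaining work is algebra, and the inclusion $\mathcal{B}'(x)\subset\{n\le x\}$ together with the lcm-property gives $R(x)\ge\sum_{\mathcal{B}'(x)}1/n$ immediately.
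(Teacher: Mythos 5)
Your plan follows the paper's approach: both reduce the problem to a short-interval asymptotic for $\sum_{n\in\mathcal{B}(x),\ x/(q+1)<n\le x/q}1/n$ (this is the paper's Proposition~\ref{Rprop}, specialized to $q=5$ and $q=11$), proved by mirroring the argument behind Theorem~\ref{corBrec}, and then combine with Theorem~\ref{corBrec} itself. So the strategy is sound and essentially identical to the paper's.

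There is one genuine gap in your reduction step. The identity
\begin{equation*}
\sum_{n\in\mathcal{B}'(x)}\frac1n=\sum_{n\in\mathcal{B}(x)}\frac1n+\frac{S_1(x)}{30}+\frac{79\,S_2(x)}{4620}
\end{equation*}
tacitly assumes that the replacement creates no collisions: no product $mn_1$ with $n_1\in(\frac x6,\frac x5]\cap\mathcal{B}(x)$, $m\in\{2,3,5\}$, and no product $mn_2$ with $n_2\in(\frac x{12},\frac x{11}]\cap\mathcal{B}(x)$, $m\in\{3,4,5,7,11\}$, may coincide with a surviving element of $\mathcal{B}(x)$, nor may a product from the first cluster equal one from the second. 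Your lcm verification does not address this, because equal elements pose no lcm problem but do invalidate the displayed sum decomposition. Both facts need a (short but nontrivial) argument, which the paper supplies: $mn_i\notin\mathcal{B}(x)$ because $n_i$ is a proper divisor of $mn_i$ with $F(n_i)>x$, contradicting the defining property of $\mathcal{B}(x)$; and $m_1n_1\neq m_2n_2$ by a case analysis using that $n_iP^-(n_i)>x$ forces $P^-(n_1)>5$ and $P^-(n_2)>11$. A secondary point: the short-interval asymptotic you invoke is not obtained from Theorem~\ref{thm1} alone; after sorting $\mathcal{B}(x)$ by smallest prime factor (Lemma~\ref{BpAp}), the counts that arise are $A(\cdot,y,p-1)$, so Theorem~\ref{thmA} for $A(x,y,z)$ is the tool actually required.
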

It is clear that any other examples of $R(x)>1$, besides $x=5,11$, if they exist, 
would lead to further small improvements. In the absence of such examples, it seems plausible 
that Theorem \ref{Rcor} is best possible. 
\begin{question}
Is it true that 
$R(x) = 1- \frac{\kappa +o(1)}{\log x} $ as $x\to \infty$ ?
\end{question}

\subsection{The small sieve of Erd\H{o}s and Ruzsa}

Here the question is how small we can make the number of unsieved integers up to $x$, 
after removing multiples of a set $\mathcal{S}$  with the property
\begin{equation}\label{SSER}
  \sum_{n\in \mathcal{S}} \frac{1}{n} \le 1, \quad 1\notin \mathcal{S}.
\end{equation}
Define
$$
H(x) = \min_{\mathcal{S}} |\{n\le x: s \nmid n \text{ for all } s\in \mathcal{S}  \}|,
$$
where the minimum is over all sets $ \mathcal{S}$ satisfying \eqref{SSER}.
Ruzsa \cite{Ru} showed that $\frac{x}{10\log x} \le H(x) < \frac{x}{(\log x)^c}$ for some constant $c>0$ and all $x\ge x_0$.
Tenenbaum \cite{Ten86} improved the upper bound to $H(x) \ll \frac{x}{\log x} (\log\log x)^2$, and Saias \cite{AEDD} 
established $H(x) \ll \frac{x}{\log x}$ via the inequality
$H(x) \le \max(A(x),B(x) +\sqrt{x})$, where $B(x) = |\mathcal{B}(x)|$.

Saias \cite{AEDD} asked whether $\mathcal{B}(x)$ satisfies \eqref{SSER} for all $x\ge 2$ and observed that
a positive answer would imply $H(x) \le A(x)$. 
Theorem \ref{corBrec} shows that $\mathcal{B}(x)$ does indeed satisfy \eqref{SSER} for all sufficiently large $x$. 
After removing multiples of members of $\mathcal{B}(x)$, the unsieved integers up to $x$ are exactly the members of 
$$
\mathcal{A}(x)=\{n\ge 1: F(n)\le x\}.
$$
Thus
$$
H(x) \le A(x) = \frac{(a+o(1)) x}{\log x}< \frac{1.53797 x}{\log x},
$$
by Theorem \ref{thm1}, for all sufficiently large $x$. 
The term $-\frac{\delta}{\log x}$ in Theorem \ref{corBrec} allows us to improve on this upper bound. 
For $\frac{1}{x} \le \tau \le 1$, define
$$
\mathcal{B}_{\tau}(x) = \mathcal{B}(\tau x) \cup \{p \in (\tau x,x]: p \text{ prime}\},
$$
as in Ruzsa \cite[Proof of the upper bound in Theorem I]{Ru}.
After removing multiples of members of $\mathcal{B}_{\tau}(x)$, the unsieved integers up to $x$ are exactly the members of 
$\mathcal{A}(\tau x)$. It follows that 
$$
H(x) \le H^*(x):=\min_{\tau \in \mathcal{T}(x)} A(\tau x),
$$
where 
$$
\mathcal{T}(x)=\Bigl\{\tau \ge 1/x: \sum_{n\in \mathcal{B}_\tau(x)} \frac{1}{n}\le 1\Bigr\}.
$$  
With Theorems \ref{thm1} and \ref{corBrec}, we will establish an estimate for $H^*(x)$.
\begin{theorem}\label{thmHasy}
Let $a$ and $\delta$ be as in Theorems \ref{thm1} and \ref{corBrec}.
For $x\ge 2$, we have
$$
H(x) \le H^*(x) = \frac{a e^{-\delta} x}{\log x}+ O\left(\frac{x}{(\log x)^{3/2}}\right) ,
$$
where $a e^{-\delta} \approx 0.878$. More precisely, $0.877992< a e^{-\delta} < 0.878171$. 
\end{theorem}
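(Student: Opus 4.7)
The plan is to show that the optimal $\tau$ in the definition of $H^*(x)$ is essentially $e^{-\delta}$. Since $A$ is non-decreasing, minimizing $A(\tau x)$ over $\tau \in \mathcal{T}(x)$ amounts to locating the smallest admissible $\tau$, and then invoking Theorem \ref{thm1} to evaluate $A$ there.

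The first step is to expand the constraint function
$$f(\tau) := \sum_{n \in \mathcal{B}_\tau(x)} \frac{1}{n} = \sum_{n \in \mathcal{B}(\tau x)} \frac{1}{n} + \sum_{\tau x < p \le x} \frac{1}{p}.$$
Theorem \ref{corBrec} gives the first sum as $1 - \delta/\log(\tau x) + O((\log x)^{-3/2})$. For the prime sum I would use a quantitative Mertens estimate $\sum_{p \le y} 1/p = \log\log y + M + O((\log y)^{-3/2})$ (which follows from PNT with a classical de la Vall\'ee Poussin error, or from Rosser--Schoenfeld) to write
$$\sum_{\tau x < p \le x} \frac{1}{p} = \log \frac{\log x}{\log (\tau x)} + O\bigl((\log x)^{-3/2}\bigr).$$
Taylor-expanding in $\log\tau/\log x$, uniformly for $\tau \in [e^{-2\delta}, 1]$ say, one obtains
$$f(\tau) = 1 - \frac{\delta + \log \tau}{\log x} + O\bigl((\log x)^{-3/2}\bigr).$$

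Next, this expansion pins down the smallest admissible $\tau$. For a suitable absolute constant $C$, every $\tau \ge e^{-\delta}(1 + C/\sqrt{\log x})$ lies in $\mathcal{T}(x)$, while no $\tau \le e^{-\delta}(1 - C/\sqrt{\log x})$ does. Consequently, any $\tau \in \mathcal{T}(x)$ that minimizes $A(\tau x)$ satisfies
$$\log(\tau x) = \log x - \delta + O\bigl((\log x)^{-1/2}\bigr), \qquad \tau x = e^{-\delta} x\,\bigl(1 + O\bigl((\log x)^{-1/2}\bigr)\bigr).$$
Feeding these into Theorem \ref{thm1} then yields
$$H^*(x) = \frac{a\,\tau x}{\log(\tau x)}\left(1 + O\!\left(\frac{1}{\log x}\right)\right) = \frac{ae^{-\delta} x}{\log x} + O\!\left(\frac{x}{(\log x)^{3/2}}\right),$$
and the numerical range $0.877992 < ae^{-\delta} < 0.878171$ follows from the explicit bounds on $a$ and $\delta$ recorded in Theorems \ref{thm1} and \ref{corBrec}.

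The main obstacle is matching error terms. To localize the optimal $\tau$ to within $O((\log x)^{-1/2})$, the constraint $f(\tau)$ must be known to precision $O((\log x)^{-3/2})$; this forces the Mertens estimate on the prime-reciprocal sum to a precision strictly sharper than the classical $O(1/\log y)$ — any weaker version would only give $\tau = e^{-\delta} + o(1)$ and produce an inferior error term. Once the two estimates are matched, the remainder of the proof is a routine Taylor expansion and monotonicity argument on $A$.
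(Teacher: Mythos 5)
Your argument is correct and follows essentially the same route as the paper's: decompose $\sum_{n\in\mathcal{B}_\tau(x)}1/n$ into the $\mathcal{B}(\tau x)$ piece (Theorem~\ref{corBrec}) plus the prime tail (PNT), expand to localize $\tau_0=\min\mathcal{T}(x)=e^{-\delta}+O((\log x)^{-1/2})$, and evaluate $A(\tau_0 x)$ by Theorem~\ref{thm1}. The paper adds one short preliminary observation you leave implicit—that any $\tau\in\mathcal{T}(x)$ must satisfy $\tau x\to\infty$ and $\log\tau=o(\log x)$ (otherwise the prime sum already exceeds $1$), which justifies restricting attention to $\tau$ near $e^{-\delta}$ before Taylor expanding—but this is a minor bookkeeping point and the substance is identical.
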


\begin{corollary}
For all sufficiently large $x$, we have
$$
H(x) < \frac{0.879 x}{\log x}. 
$$
\end{corollary}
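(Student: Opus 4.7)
The corollary is an essentially immediate consequence of Theorem \ref{thmHasy}, so the plan reduces to showing that the constant and error term together fit under $0.879$.

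The plan is to start from the asymptotic
\begin{equation*}
H(x) \le H^*(x) = \frac{a e^{-\delta} x}{\log x}+ O\left(\frac{x}{(\log x)^{3/2}}\right),
\end{equation*}
given by Theorem \ref{thmHasy}, and divide by $x/\log x$ to get
\begin{equation*}
\frac{H(x) \log x}{x} \le a e^{-\delta} + O\left(\frac{1}{(\log x)^{1/2}}\right).
\end{equation*}
Next I would invoke the explicit numerical bound $a e^{-\delta} < 0.878171$ from Theorem \ref{thmHasy}, leaving a gap of $0.879 - 0.878171 = 0.000829 > 0$. Since the implicit $O(1/\sqrt{\log x})$ term tends to $0$, there exists $x_0$ such that it is bounded by, say, $0.0008$ for $x \ge x_0$, and then $H(x)\log x/x < 0.878171 + 0.0008 < 0.879$ for such $x$.

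The only non-routine input required is the numerical bound $a e^{-\delta} < 0.878171$ itself, which is already asserted as part of Theorem \ref{thmHasy} (and ultimately depends on the effective bounds $a < 1.53797$ and $\delta > 0.560374$ supplied by Theorems \ref{thm1} and \ref{corBrec}). So the main obstacle, if any, lies not in this corollary but in securing those numerical constants with the stated precision; granting them, the corollary is just the observation that the leading constant plus a vanishing error falls below $0.879$.
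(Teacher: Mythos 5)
Your proposal is correct and matches the paper's (unstated, because immediate) argument: the corollary follows directly from the asymptotic in Theorem \ref{thmHasy} together with the explicit bound $a e^{-\delta} < 0.878171$, since the $O(x/(\log x)^{3/2})$ error term eventually contributes less than the remaining gap to $0.879$.
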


For $2\le x \le 40$, we find that $H^*(x)-H(x) \in \{0,1\}$. 
It seems plausible that $H(x)\sim H^*(x)$ as $x\to \infty$.
\begin{question}
Is it true that
$
H(x) \sim  \frac{a e^{-\delta} x}{\log x} 
$
as $x\to \infty$ ?
\end{question}

Ruzsa \cite{Ru} also considered the more general problem of estimating 
$$
H(x,z) = \min_{\mathcal{S}} |\{n\le x: s \nmid n \text{ for all } s\in \mathcal{S}  \}|,
$$
where the minimum now is over all sets $ \mathcal{S}$ that satisfy
\begin{equation}\label{SSER2}
\sum_{n\in \mathcal{S}} \frac{1}{n} \le z, \quad 1\notin \mathcal{S}.
\end{equation}
Let 
$$
H^*(x,z):=\min_{\tau \in \mathcal{T}(x,z)} A(\tau x),
$$
where 
$$
\mathcal{T}(x,z)=\Bigl\{\tau \ge 1/x: \sum_{n\in \mathcal{B}_\tau(x)} \frac{1}{n}\le z\Bigr\}.
$$  
\begin{theorem}\label{thmHasy2}
Let $a$ and $\delta$ be as in Theorems \ref{thm1} and \ref{corBrec}.
Let $\mu >  -\delta$ be constant. For $x\ge 2$, we have 
$$
H\left(x, 1+\frac{\mu}{\log x}\right) \le H^*\left(x, 1+\frac{\mu}{\log x}\right) = \frac{a e^{-\delta -\mu} x}{\log x}+ O\left(\frac{x}{(\log x)^{3/2}}\right) .
$$
Let $Z>1$ be fixed. Uniformly for $1\le z \le Z$, $x\ge 2$,
$$
H\left(x,z\right) \asymp \frac{x^{\exp(1-z)}}{\log x}.
$$
\end{theorem}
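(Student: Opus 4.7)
My plan is to pinpoint the optimal threshold $\tau$ in the definition of $H^*$ by combining Theorem~\ref{corBrec} with Mertens-type estimates, then apply Theorem~\ref{thm1}; the lower bound in the second assertion will require a further Ruzsa-style argument.

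Writing $y=\tau x$, the defining condition of $\mathcal{T}(x,z)$ reads
$$
\sum_{n\in \mathcal{B}(y)} \frac{1}{n} + \sum_{y<p\le x} \frac{1}{p} \le z.
$$
Theorem~\ref{corBrec} evaluates the first sum as $1-\delta/\log y + O((\log y)^{-3/2})$, while Mertens's theorem with the PNT error term gives $\log(\log x/\log y) + O(\exp(-c\sqrt{\log y}))$ for the second. Hence the constraint simplifies to
$$
\log\frac{\log x}{\log y} - \frac{\delta}{\log y} \le z - 1 + O\bigl((\log y)^{-3/2}\bigr).
$$

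For the first assertion, take $z = 1+\mu/\log x$ and parametrize $\log y = \log x - v$; Taylor expanding reduces the constraint to $v-\delta \le \mu + O((\log x)^{-1/2})$, so the minimal admissible $y$ satisfies $y^* = xe^{-\mu-\delta}\bigl(1+O((\log x)^{-1/2})\bigr)$. Substituting $y^*$ into Theorem~\ref{thm1} yields the claimed asymptotic. The upper bound of the second assertion is analogous: for $z \in [1,Z]$ constant, the constraint forces $\log y^* = e^{1-z}\log x + O(1)$, hence $y^* \asymp x^{e^{1-z}}$, and Theorem~\ref{thm1} gives $A(y^*) \asymp x^{e^{1-z}}/\log x$.

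The main obstacle is the matching lower bound $H(x,z) \gg x^{e^{1-z}}/\log x$. My plan is to extend Ruzsa's lower bound for $H(x,1)$ from \cite{Ru} by a dichotomy: pick $y = c_Z x^{e^{1-z}}$ with $c_Z>0$ small enough that $\sum_{y<p\le x}1/p \ge z-1+\eta$ for a fixed $\eta>0$ uniformly over $z\in[1,Z]$. For any admissible $\mathcal{S}$, either $\sum_{s\in \mathcal{S},\,s>y} 1/s \ge z-1$, in which case $\mathcal{S}\cap[1,y]$ has reciprocal sum $\le 1$ and Ruzsa's bound supplies $\gg y/\log y$ integers in $[1,y]$ unsieved by $\mathcal{S}\cap[1,y]$ (these survive the full sieve, since no $s>y$ divides an integer $\le y$); or else a positive proportion of the primes in $(y,x]$ is absent from $\mathcal{S}$, in which case a construction of integers $n=pm$ with such a prime $p$ and a suitably structured cofactor $m$ provides $\gg x^{e^{1-z}}/\log x$ unsieved integers $\le x$. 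Uniformity in $z \in [1,Z]$ follows by tracking $c_Z$ and the implicit constants throughout.
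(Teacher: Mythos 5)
Your treatment of the first assertion and of the upper bound in the second assertion tracks the paper closely: set up the constraint via Theorem~\ref{corBrec} and Mertens, solve for the optimal $\tau$, plug into Theorem~\ref{thm1}. That part is fine.

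For the lower bound $H(x,z)\gg x^{e^{1-z}}/\log x$, your dichotomy is a genuinely different strategy from the paper's, and it has a defect as written. You choose $y=c_Z x^{e^{1-z}}$ with a constant $c_Z$ and claim that $\sum_{y<p\le x}1/p\ge z-1+\eta$ for a \emph{fixed} $\eta>0$. But with $\log y=e^{1-z}\log x+\log c_Z$ and $c_Z$ bounded away from $0$ and $1$, one computes
$$
\sum_{y<p\le x}\frac{1}{p}=\log\frac{\log x}{\log y}+O\!\left(\frac{1}{\log x}\right)=(z-1)+O\!\left(\frac{|\log c_Z|}{\log x}\right),
$$
so the surplus over $z-1$ shrinks like $1/\log x$; no constant $c_Z$ buys a fixed positive gap. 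The dichotomy still works if you aim for a gap of order $\eta_0/\log x$: in the case $\sum_{s\in\mathcal S,\,s>y}1/s<z-1$, you then get $\sum_{p\in(y,x],\,p\notin\mathcal S}1/p\gg1/\log x$, and since each summand is $<1/y$, this already gives $\gg y/\log x\asymp x^{e^{1-z}}/\log x$ unsieved \emph{primes} directly — the auxiliary construction $n=pm$ is superfluous. In the other case you are leaning on Ruzsa's theorem $H(y,1)\gg y/\log y$ as a black box.

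The paper's route is more streamlined and self-contained: it sets $y:=H(x,z)\log x$ (so no threshold needs to be chosen), observes that at most $y/\log x$ of the primes in $(y,x]$ can be absent from $\mathcal{S}$ (each absent one is unsieved and hence counted by $H(x,z)=y/\log x$), and then uses the elementary sieve inequality $U(y,\mathcal{S})+\sum_{s\in\mathcal S}\lfloor y/s\rfloor\ge\lfloor y\rfloor$ together with $U(y,\mathcal S)\le H(x,z)$ to get $\sum_{s\le y,\,s\in\mathcal S}1/s\ge1-O(1/\log x)$. Adding the two contributions to $\sum_{s\in\mathcal S}1/s\le z$ yields $z\ge1+\log(\log x/\log y)+O(1/\log x)$, hence $y\gg x^{e^{1-z}}$ in one stroke. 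That argument avoids the dichotomy, avoids choosing $c_Z$, and does not require importing Ruzsa's bound. Your version can be repaired along the lines above, but I would rework it to remove the ``fixed $\eta$'' claim and to drop the unnecessary $n=pm$ construction.
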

The last estimate improves Ruzsa's result \cite[Theorem I]{Ru}
$$
\lim_{x\to \infty} \frac{\log H(x,z)}{\log x} = e^{1-z}.
$$

\subsection{Integers with dense divisors}

The significance of $F(n)$ to the distribution of divisors comes from Tenenbaum's identity \cite[Lemma 2.2]{Ten86}
$$
\frac{F(n)}{n} = \max_{1\le i <k} \frac{d_{i+1}}{d_i},
$$
where $1=d_1<d_2 < \cdots < d_k = n$ is the increasing sequence of divisors of $n$.
Thus
$$
D(x,y) := |\{n\le x: F(n)/n \le y \}|
$$
counts the number of integers up to $x$ whose sequence of divisors grows by factors of at most $y$. 
It may appear that the asymptotic estimates for $D(x,y)$ in \cite{PDD} should suffice to derive Theorem \ref{thm1}, but we have not been able to do so.
Instead, we derive Theorem \ref{thm1} from new estimates for the more general 
$$
D(x,y,z) := |\{n\le x: F(n)/n\le y, \ P^-(n)>z\}|,
$$ 
which has been considered previously by Saias \cite{EDD2} and the author \cite{IDD1}.

In Section \ref{secD}, we state these new estimates for $D(x,y,z)$, which are proved in Sections \ref{secproofduv} through \ref{secproofT2}.
The main result here is Theorem \ref{AF}, which is an extension of \cite[Theorem 1.3]{PDD} (where $z=1$) and is derived with the same overall strategy.
The ideas in \cite{IDD3,PDD,CFAE} generalize nicely to include the additional parameter $z$ without too much added difficulty.
The proofs in Section \ref{secA} suggest that the parameters $y$ and $z$ are both needed when trying to obtain sharp estimates for $A(x)$ from those for $D(x,y,z)$. 
As an added bonus, the new information about $D(x,y,z)$ leads to estimates not only for $A(x)$, but for the more general
$$
A(x,y,z) := |\{n\le x: F(n)\le xy, P^-(n)>z\}|,
$$
which are needed for the proofs of Theorems \ref{corBrec}, \ref{Rcor}, \ref{thmHasy} and \ref{thmHasy2}. 
In Section \ref{secacomp} we give algorithms for computing the constant factors in the asymptotics for $D(x,y,z)$ and $A(x,y,z)$,
mirroring similar computations in \cite{CFAE} for the constant factor in the asymptotic for $D(x,y)$. 
We prove Theorem \ref{corBrec} in Section \ref{secproofcorBrec} and Theorem \ref{Rcor} in Section \ref{secRcor}.
Section \ref{secproofHasy} contains the proofs of Theorems \ref{thmHasy} and \ref{thmHasy2}.
In Sections \ref{secnumbeta} and \ref{secnummuq}, we give the details of the computations of the constants $\beta$ (needed for $\delta$) and 
$\mu_q$ (needed for $\kappa$).

\section{Estimates for $D(x,y,z)$ and $A(x,y,z)$}\label{secD}

Define
$$
u=\frac{\log x}{\log y}, \quad v=\frac{\log x}{\log z}, \quad r=\frac{u}{v}=\frac{\log z}{\log y}.
$$
In \cite[Theorem 1]{IDD1} we found that 
\begin{equation}\label{IDD1T1}
D(x,y,z) = \frac{x d(u,v)}{\log z} +\frac{y}{\log y}-\frac{z}{\log z} +O\left(\frac{x}{\log^2 z}\right), \quad (x \ge y \ge z \ge 3/2),
\end{equation}
where the function $d(u,v)$ is defined by a certain difference-differential equation \cite[Eq. 3]{IDD1}.
This definition of $d(u,v)$ is based on the Buchstab identity
\begin{equation}\label{Dxyzp}
D(x,y,z) = 1 + \sum_{z<p \le y} D(x/p,py,p-0),
\end{equation}
which follows from grouping the integers counted in $D(x,y,z)$ according to their smallest prime factor $p$. 
In \cite{IDD3,PDD} we used a different kind of functional equation (i.e. Lemma \ref{lem0} with $z=1$) to show that
\begin{equation}\label{PDDThm}
D(x,y) = x \eta(y) d(u) \left(1+O\left(\frac{1}{\log x}\right)\right) \quad (x\ge y\ge 2),
\end{equation}
where $\eta(y) =1 +O(1/\log y)$ and
 $d(u)$ is given by $d(u)=0$ for $u<0$ and 
\begin{equation}\label{dinteq}
d(u)= 1-\int_0^{\frac{u-1}{2}} \frac{d(t)}{t+1} \ \omega\left(\frac{u-t}{t+1}\right) \, \mathrm{d} t \qquad (u\ge  0).
\end{equation}
Here and throughout, $\omega(u)$ denotes Buchstab's function. 
Equation \eqref{dinteq} can be solved with Laplace transforms to obtain \cite[Theorem 1]{IDD3}
\begin{equation}\label{IDD3T1}
d(u) = \frac{C}{u+1} \left(1+O\left(\frac{1}{u^2}\right)\right), \quad  C=\frac{1}{1-e^{-\gamma}}=2.280291...
\end{equation}

The following theorems generalize these results to $d(u,v)$ and $D(x,y,z)$. 

\begin{theorem}\label{duv}
For $v\ge u \ge 1$ we have
\begin{equation*}
\begin{split}
d(u,v) &= e^{-\gamma} (1-u/v) \, d(u) \left(1+ O\left(\frac{1}{u v}\right)\right)  =
\frac{C e^{-\gamma}  (1-u/v)}{u+1} 
\left(1+ O\left(\frac{1}{u^2}\right)\right) .
\end{split}
\end{equation*}
\end{theorem}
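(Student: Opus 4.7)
The second equality in Theorem~\ref{duv} follows from the first by applying \eqref{IDD3T1}: substituting $d(u) = \frac{C}{u+1}(1+O(1/u^2))$ into the first asymptotic and using $v \ge u$ (so $1/(uv) \le 1/u^2$) to absorb the error terms yields the second form. The substantive task is therefore to prove $d(u,v) = e^{-\gamma}(1-u/v)\,d(u)\,(1+O(1/(uv)))$.

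\textbf{Integral equation and ansatz.} My plan is to work from the difference-differential equation defining $d(u,v)$ in \cite[Eq.~3]{IDD1}, or equivalently from the integral equation obtained by inserting \eqref{IDD1T1} into the Buchstab identity \eqref{Dxyzp} and letting $x\to\infty$. Changing variables $t=x^{1/s}$ in the sum over primes $p\in(z,y]$ turns this into a recurrence of the form $v\,d(u,v) = \int_u^v d\bigl(\tfrac{u(s-1)}{u+s},\,s-1\bigr)\,ds + \text{lower order}$, which expresses $d(u,v)$ through its values along the diagonal $v'=s-1$. I would then substitute the ansatz $d(u,v) = e^{-\gamma}(1-u/v)\,d(u)\,(1+R(u,v))$ and check that the main-term identity closes: the factor $(1-u/v)=\log(y/z)/\log y$ reflects the range $(z,y]$ of allowed smallest prime factors, while $e^{-\gamma}$ emerges from combining Mertens's product $\prod_{p\le z}(1-1/p)\sim e^{-\gamma}/\log z$ with the integral equation \eqref{dinteq} for $d(u)$ after the change of variables. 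This reduces the theorem to the bound $R(u,v) = O(1/(uv))$.

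\textbf{Main obstacle.} The chief difficulty is uniform control of $R(u,v)$ near the boundary $v\to u^+$, where the principal term vanishes linearly in $v-u$ and the relative error can blow up. I would handle this by first establishing directly from the integral equation that $d(u,v)$ itself vanishes linearly at $v=u$, and then splitting the $s$-integration into a boundary layer near $s=u$ (controlled by a Taylor expansion of $d(\cdot,s-1)$ in $s$) and a bulk region (where the ansatz is plugged in and the Mertens/\eqref{dinteq} cancellation carries the argument). A Grönwall-style induction on $u$, in the spirit of the analyses in \cite{IDD3, PDD, CFAE}, should then propagate the estimate $R(u,v)=O(1/(uv))$ throughout the range $v\ge u\ge 1$, completing the proof.
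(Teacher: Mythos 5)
Your reduction of the second equality to the first via \eqref{IDD3T1} is fine, and you correctly identify that the real work is the estimate $d(u,v)=e^{-\gamma}(1-u/v)d(u)(1+O(1/(uv)))$ and that the danger zone is $v\to u^+$. However, your proposed route and the paper's route diverge at the very first structural choice, and I think the gap you leave is genuine.

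You start from the Buchstab identity \eqref{Dxyzp}, which after the change of variables $p=x^{1/s}$ produces a recurrence that couples $d(u,v)$ to values $d\bigl(\tfrac{u(s-1)}{u+s},s-1\bigr)$ whose ratio $u'/v'=\tfrac{u}{u+s}$ varies with $s$. That makes the equation two-dimensional in an essential way, and you then need to control the relative error $R(u,v)$ uniformly in both variables, including near the degenerate boundary $v=u$. The paper deliberately avoids the Buchstab identity here and instead uses the Lemma~\ref{lem0}-type decomposition ($m=nr$ with $n\in\mathcal{D}_{y,z}$, $P^-(r)>ny$). The decisive consequence is Lemma~\ref{lem3}: after setting $d_r(u):=d(u,u/r)$, one obtains a \emph{closed} integral equation for $d_r$ with the \emph{same} $r$ under the integral sign, namely $d_r(u)=\omega(u/r)-r\omega(u)-\int_0^u\frac{d_r(t)}{t+1}\omega\bigl(\tfrac{u-t}{t+1}\bigr)dt$. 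This one-parameter family of one-dimensional equations is what makes the Laplace-transform method of \cite{IDD3} applicable. Your reparametrization never fixes $r$, so you don't get this closure, and I don't see how a Grönwall iteration on your equation would.

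The other missing ingredient is the cancellation mechanism. After Laplace inversion the paper isolates $E_r(\tau)=\Omega_r(\tau)-e^{-\gamma}-r(\Omega_1(\tau)-e^{-\gamma})$, establishes the superexponential bound $E_r(\tau)\ll_A r(1-r)e^{-A\tau}$, and — crucially — the moment identity $\int_0^\infty E_r(\tau)e^\tau\,d\tau=0$. It is precisely this vanishing that upgrades the naive error $O(r(1-r)/u)$ (which would give only $O(1/v)$ relative error) to $O(r(1-r)/u^3)$, hence a relative error $O(1/(uv))$ after dividing by $(1-r)d(u)\asymp(1-r)/u$. Your sketch invokes a Mertens/\eqref{dinteq} cancellation and a boundary-layer Taylor expansion, but these are heuristics for why the \emph{leading} constant $e^{-\gamma}(1-u/v)$ is right, not a mechanism that produces the quadratic saving $1/(uv)$. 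A Grönwall argument propagates upper bounds; it does not by itself manufacture the exact first-moment cancellation needed here. Without identifying an analogue of $\int_0^\infty E_r(\tau)e^\tau\,d\tau=0$ in your framework, the proof does not close. I'd recommend reorganizing along the ratio $r=u/v$ and using the non-Buchstab functional equation so that the one-dimensional Laplace machinery becomes available.
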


We will use the following notation throughout. Let 
$$
\Pi(z) := \prod_{p\le z}\left(1-\frac{1}{p}\right),\qquad \Sigma(z) := \sum_{p\le z} \frac{\log p}{p-1},
$$
and
\begin{equation}\label{etadef}
 \mathcal{E}(z):= \sup_{t\ge z} \left|\Sigma(t)+\gamma-\log t \right| \ll_\varepsilon \exp\left\{-(\log z)^{3/5-\varepsilon}\right\}, 
\end{equation}
for any fixed $\varepsilon >0$, by the prime number theorem.

\begin{theorem}\label{AF}
For $x\ge y \ge z \ge 3/2$, 
\begin{equation}\label{thmAFeq1}
D(x,y,z)= 
x d(u,v)e^\gamma\Pi(z)
 + \frac{x \beta_{y,z}}{\log xy}
+\frac{y}{\log y} - \frac{z}{\log z} 
+ O\left(\frac{x \log y}{\log^2 x \log z}\right),
\end{equation} 
where $\beta_{y,z} = c_{y,z}- C\Pi(z) \log (y/z)$ and $c_{y,z}$ is as in Theorem \ref{T2}. We have
\begin{equation}\label{thmAFeq2}
 |\beta_{y,z}| \le C \Pi(z) (\mathcal{E}(z)+\mathcal{E}(y)).
\end{equation}
\end{theorem}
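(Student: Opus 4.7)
The plan is to derive Theorem \ref{AF} by algebraically reorganising Theorem \ref{T2}, which I expect to express $D(x,y,z)$ as $y/\log y - z/\log z + xc_{y,z}/\log(xy)$ plus an error of the claimed size, into the target form using the asymptotic for $d(u,v)$ from Theorem \ref{duv}. The key point is that the leading behaviour of $x d(u,v) e^\gamma \Pi(z)$ matches the ``smooth part'' of $x c_{y,z}/\log(xy)$, leaving only the Mertens-type oscillatory remainder in $\beta_{y,z}$.

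First I would match the main terms. The elementary identities $u+1 = \log(xy)/\log y$ and $1-u/v = \log(y/z)/\log y$ combine with Theorem \ref{duv} to give
\begin{equation*}
 x d(u,v) e^\gamma \Pi(z) = \frac{C x \Pi(z) \log(y/z)}{\log(xy)} + R(x,y,z),
\end{equation*}
where $R(x,y,z)$ carries the $O(1/u^2)$ relative error from Theorem \ref{duv}; treating the small-$u$ and large-$u$ ranges separately, and using $\Pi(z) \ll 1/\log z$, one shows that $R$ is absorbed by the claimed error term in \eqref{thmAFeq1}. With the definition $\beta_{y,z} = c_{y,z} - C\Pi(z)\log(y/z)$, this rearranges the main term $xc_{y,z}/\log(xy)$ of Theorem \ref{T2} into $x d(u,v) e^\gamma \Pi(z) + x\beta_{y,z}/\log(xy)$, which together with the boundary contributions $y/\log y - z/\log z$ yields \eqref{thmAFeq1}.

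For \eqref{thmAFeq2}, I would exploit the explicit representation of $c_{y,z}$ that emerges from the proof of Theorem \ref{T2}. I anticipate that $c_{y,z}$ equals $C\Pi(z)$ multiplied by a Mertens-type sum over primes in $(z,y]$ (or an integral of $g(t)$ from the introduction); by \eqref{etadef} such a sum differs from $\log(y/z)$ by at most $\mathcal{E}(y) + \mathcal{E}(z)$, and cancellation with $C\Pi(z)\log(y/z)$ gives $|\beta_{y,z}| \le C\Pi(z)(\mathcal{E}(y)+\mathcal{E}(z))$.

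The main obstacle is Theorem \ref{T2} itself: simultaneously producing the sharp error $O(x\log y/(\log^2 x \log z))$, which refines the $O(x/\log^2 z)$ of \eqref{IDD1T1}, and a closed-form expression for $c_{y,z}$ transparent enough to expose its Mertens-type structure. I expect this to be accomplished by iterating the Buchstab identity \eqref{Dxyzp} together with a functional-equation argument in the spirit of \eqref{dinteq}, isolating the contribution of small primes where $\Sigma$ admits precise control via the prime number theorem bound in \eqref{etadef}. Once that representation is in hand, both \eqref{thmAFeq1} and \eqref{thmAFeq2} follow by the routine algebraic manipulations sketched above.
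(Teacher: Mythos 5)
Your plan runs in the opposite logical direction from the paper, and this creates a genuine circularity: in the paper, \eqref{Dasymp} of Theorem~\ref{T2} is \emph{derived from} \eqref{thmAFeq1}, not the other way around. The proof of Theorem~\ref{AF} proceeds by a Laplace-transform argument (Lemmas~\ref{A1}--\ref{A4}, then the change of variables $x=y^{e^\lambda-1}$ followed by Lemma~\ref{lem4} and \eqref{FLap}), which produces \eqref{thmAFeq1} directly with $\beta_{y,z}:=(\log y)(\mu_{y,z}+\varepsilon_z C r)$. Only afterwards does one specialize to $y,z$ fixed, $x\to\infty$ to obtain \eqref{Dsmally}, identify $c_{y,z}=C\Pi(z)\log(y/z)+\beta_{y,z}$, derive the series representation \eqref{tdensecon} by the method of \cite{CFAE}, and then deduce \eqref{thmAFeq2} from \eqref{cyz1b}. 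You defer the proof of Theorem~\ref{T2} to ``iterating the Buchstab identity,'' but that identity (as in \cite{IDD1}) yields only \eqref{IDD1T1} with the weaker error $O(x/\log^2 z)$; it does not give the sharper $O(x\log y/(\log^2 x\log z))$ without the Laplace-transform machinery you are trying to avoid. You also misremember the shape of Theorem~\ref{T2}: the boundary terms $y/\log y - z/\log z$ do not appear there, and the error is relative, of size $c_{y,z}x/\log(xy)\cdot O(1/\log x + \log^2 y/\log^2 x)$, which when $y$ is not small compared to $x$ is not $O(x\log y/(\log^2 x\log z))$.

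Independently of the circularity, your error bookkeeping does not close. You replace $x\,d(u,v)e^\gamma\Pi(z)$ by $Cx\Pi(z)\log(y/z)/\log(xy)$ using the second estimate of Theorem~\ref{duv}, whose relative error is $O(1/u^2)=O(\log^2 y/\log^2 x)$. Multiplying by the main term (of size $\ll x\log y/(\log z\log xy)$) gives a remainder $\ll x\log^3 y/(\log z\log^3 x)$, which exceeds the claimed $O(x\log y/(\log^2 x\log z))$ precisely when $\log^2 y\gg\log x$. Theorem~\ref{AF} is asserted for the full range $x\ge y\ge z\ge 3/2$, so this regime is not negligible; ``treating small-$u$ and large-$u$ separately'' does not fix the intermediate range. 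The paper's proof avoids this loss by using the \emph{first} estimate in Theorem~\ref{duv}, whose relative error is $O(1/(uv))=O(\log y\log z/\log^2 x)$, together with $d(u,v)\ll(1-r)/(u+1)$ and $\varepsilon_z\ll1/\log^2 z$, to control the cross terms. Your second step, deducing \eqref{thmAFeq2} by comparing $c_{y,z}$ to $C\Pi(z)\log(y/z)$, is sound in spirit: that is exactly how \eqref{thmAFeq2} follows from \eqref{cyz1b}, which in turn comes from the representation \eqref{tdensecon} together with \eqref{etadef} and Lemma~\ref{A2}.
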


Note that Theorem \ref{AF} implies \eqref{IDD1T1}.
A little exercise shows that Theorems \ref{duv} and \ref{AF} imply \eqref{PDDThm}.
 Let $\chi_{y,z}(n)$ be the characteristic function of the set
\begin{equation}\label{Dsetdef}
 \mathcal{D}_{y,z}:=\{n\in \mathbb{N}: F(n)/n \le y, P^-(n)>z \}.
\end{equation}
Let $\pi(y,z)$ be the number of primes $p$ satisfying $z<p\le y$, that is
$$
\pi(y,z) := \sum_{z<p\le y} 1 .$$

\begin{theorem}\label{T2}
For $x\ge y \ge z \ge 3/2$ and $y\ll \frac{x}{\log x}$ we have
\begin{equation}\label{Dasymp}
D(x,y,z)=1+ \frac{c_{y,z} x}{\log xy}\left\{1+O\left(\frac{1}{\log x }+\frac{\log^2 y}{\log^2 x}\right)\right\},
\end{equation}
and, for $x\ge y\ge z \ge 3/2$,
\begin{equation}\label{Dasymp2}
D(x,y,z)=1+ \pi(y,z)+\frac{c_{y,z} x}{\log xy}\left\{1+O\left(\frac{1}{\log x }+\frac{\log^2 y}{\log^2 x}\right)\right\},
\end{equation}
where $c_{y,z}$ is given by
\begin{equation}\label{tdensecon}
c_{y,z} = C\sum_{n\ge 1}\frac{\chi_{y,z}(n)}{n} \bigl(\Sigma(yn)-\Sigma(z) -\log n\bigr) \Pi(yn).
\end{equation}
For $y\ge z \ge 1$,
\begin{align}
c_{y,z} & = C \Pi(z)(\log y -\gamma - \Sigma(z) +\zeta_{y,z}) \quad  & |\zeta_{y,z}|  \le \mathcal{E}(y), \label{cyz1} \\ 
c_{y,z} &= C \Pi(z)(\log (y/z) +\psi_{y,z})                           & |\psi_{y,z}|  \le \mathcal{E}(z)+ \mathcal{E}(y),  \label{cyz1b} \\
c_{y,z} &= C(\Pi(z)-\Pi(y))\left(\log y+\xi_{y,z}\right)       & |\xi_{y,z}| \le  \mathcal{E}(z)+\mathcal{E}(y),\label{cyz2} \\ 
c_{y,z} & \asymp(\Pi(z)-\Pi(y)) \log y. \label{cyz3}
\end{align}
\end{theorem}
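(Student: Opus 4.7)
The plan is to deduce Theorem~\ref{T2} from Theorem~\ref{AF} by inserting the explicit asymptotic for $d(u,v)$ provided by Theorem~\ref{duv}, and then to establish the four alternative formulas \eqref{cyz1}--\eqref{cyz3} for $c_{y,z}$ directly from the defining sum \eqref{tdensecon} via Mertens-type manipulations.

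First I would substitute
$$d(u,v) = \frac{Ce^{-\gamma}(1-u/v)}{u+1}\left(1+O\!\left(\frac{1}{u^2}\right)\right)$$
into the leading term of \eqref{thmAFeq1}. Because $(u+1)\log y = \log(xy)$ and $(1-u/v)\log y = \log(y/z)$, this simplifies to
$$x\,d(u,v)\,e^{\gamma}\Pi(z) = \frac{Cx\Pi(z)\log(y/z)}{\log(xy)} + O\!\left(\frac{x\Pi(z)\log(y/z)}{u^2\log(xy)}\right).$$
The definition $\beta_{y,z} = c_{y,z} - C\Pi(z)\log(y/z)$ then causes the first two summands of \eqref{thmAFeq1} to telescope to $c_{y,z}x/\log(xy)$ plus an acceptable error. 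By the PNT, $y/\log y - z/\log z = \pi(y,z) + O(y/\log^2 y)$, which together with tracking the trivial $n=1$ contribution accounts for the explicit term $1+\pi(y,z)$ in \eqref{Dasymp2}. In the restricted range $y\ll x/\log x$ one has $\pi(y,z)\ll y/\log y$, which is absorbed into the relative error $O(1/\log x)$ of the main term via the lower bound \eqref{cyz3}, yielding \eqref{Dasymp}.

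To obtain the four formulas for $c_{y,z}$, I would split the summand of \eqref{tdensecon} as
$$\Sigma(yn)-\Sigma(z)-\log n = \bigl(\log y - \gamma - \Sigma(z)\bigr) + \bigl(\Sigma(yn)-\log(yn)+\gamma\bigr),$$
where the second bracket is $O(\mathcal{E}(yn))$ by \eqref{etadef}. The constant-in-$n$ piece produces $C(\log y - \gamma - \Sigma(z))\,S$ with $S := \sum_n \chi_{y,z}(n)\Pi(yn)/n$, while the remainder is $O(C\,\mathcal{E}(y)\,S)$. A Mertens-type density identity shows $S = \Pi(z) + O(\Pi(z)\mathcal{E}(y))$, which gives \eqref{cyz1}. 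Formula \eqref{cyz1b} follows at once from $\Sigma(z)=\log z-\gamma+O(\mathcal{E}(z))$. For \eqref{cyz2}, Mertens' theorem gives $\Pi(z)/\Pi(y) = (\log y/\log z)(1+O(\mathcal{E}(z)))$, allowing one to rewrite $\Pi(z)\log(y/z)$ as $(\Pi(z)-\Pi(y))\log y$ up to the stated error, and \eqref{cyz3} is the resulting order-of-magnitude bound.

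The main obstacle is establishing the density identity $S = \Pi(z) + O(\Pi(z)\mathcal{E}(y))$ with sharp error. Intuitively it reflects that every integer $N$ with $P^-(N)>z$ admits a unique decomposition $N = nm$ with $n\in\mathcal{D}_{y,z}$ a maximal ``head'' divisor and $P^-(m)>yn$, so that the weights $\chi_{y,z}(n)\Pi(yn)/n$ partition the natural density $\Pi(z)$ of integers with $P^-(N)>z$. Turning this into a quantitative identity with PNT-strength error requires careful summation by parts using \eqref{etadef} and quantitative control of the head decomposition, and is the one non-mechanical step in the proof.
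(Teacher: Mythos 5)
Your derivation of the expansions \eqref{cyz1} and \eqref{cyz1b} from the defining sum \eqref{tdensecon}, via the split $\Sigma(yn)-\Sigma(z)-\log n = (\log y - \gamma - \Sigma(z)) + (\Sigma(yn)+\gamma-\log(yn))$ and the exact identity $\sum_n \chi_{y,z}(n)\Pi(yn)/n = \Pi(z)$ (Lemma \ref{A2}; the identity is in fact exact, no $O(\Pi(z)\mathcal{E}(y))$ error needed), is essentially what the paper does. But the rest of the plan has two genuine gaps.

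First, a direct substitution of Theorem \ref{duv} into Theorem \ref{AF} cannot deliver the relative error in \eqref{Dasymp} and \eqref{Dasymp2}. Theorem \ref{AF} has the \emph{absolute} error $O\bigl(\frac{x\log y}{\log^2 x\,\log z}\bigr)$, while the claimed main term is $\frac{c_{y,z}x}{\log xy}$ with $c_{y,z}\asymp \Lambda(y,z)\log y$, where $\Lambda(y,z)=\Pi(z)-\Pi(y)$. When the interval $(z,y]$ contains only a single large prime $p$, one has $\Lambda(y,z)\asymp 1/(p\log p)$, so $c_{y,z}\asymp 1/p$, and the main term $\asymp x/(p\log x)$ is dwarfed by $\frac{x\log y}{\log^2 x\,\log z}\asymp \frac{x}{\log^2 x}$ unless $p\ll \log x$. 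Likewise, replacing $y/\log y-z/\log z$ by $\pi(y,z)$ via the PNT introduces an error $O(y/\log^2 y)$ that is not $o$ of the main term in this regime. The paper avoids this by \emph{not} applying Theorem \ref{AF} directly to $D(x,y,z)$: it uses the Buchstab identity $D(x,y,z)=1+\sum_{z<p\le y}D(x/p,py,p-0)$ and applies Theorem \ref{AF} to each summand, so that every contribution, including the error terms, picks up a factor $\sum_{z<p\le y}\frac{1}{p\log p}\asymp\Lambda(y,z)$ matching the order of the main term. This decomposition is the essential idea you are missing; without it the claimed relative error is simply false for what you can extract from Theorem \ref{AF}.

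Second, \eqref{cyz2} cannot be obtained from \eqref{cyz1b} by Mertens manipulation. You propose to rewrite $\Pi(z)\log(y/z)$ as $(\Pi(z)-\Pi(y))\log y$ up to $O(\mathcal{E}(z)+\mathcal{E}(y))$; this is correct as an \emph{absolute} error, since $\Pi(z)\log(y/z)-\Lambda(y,z)\log y=\Pi(y)\log y-\Pi(z)\log z\ll\mathcal{E}(y)+\mathcal{E}(z)$ by Mertens. But \eqref{cyz2} asserts the error is $C\Lambda(y,z)\xi_{y,z}$ with $|\xi_{y,z}|\le\mathcal{E}(z)+\mathcal{E}(y)$, i.e.\ an error \emph{relative to} $\Lambda(y,z)$, and $\Lambda(y,z)$ can be arbitrarily small compared to $1$. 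In fact \eqref{cyz2} is strictly stronger than \eqref{cyz1b}: in the single-large-prime scenario above, \eqref{cyz1b} only gives $c_{y,z}=O(\Pi(z)\mathcal{E}(z))$ whereas \eqref{cyz2} pins down $c_{y,z}\asymp\Lambda(y,z)\log y$. The paper obtains \eqref{cyz2} and \eqref{cyz3} from the Buchstab decomposition as well, where $\xi_{y,z}:=\frac{1}{C\Lambda(y,z)}\sum_{z<p\le y}\frac{\beta_{py,p-0}}{p}$ and the bound $|\beta_{py,p-0}|\le C\Pi(p-1)(\mathcal{E}(p)+\mathcal{E}(py))$ from \eqref{thmAFeq2} gives $|\xi_{y,z}|\le\mathcal{E}(z)+\mathcal{E}(y)$ precisely because $\sum_{z<p\le y}\Pi(p-1)/p=\Lambda(y,z)$ appears in both numerator and denominator.
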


\begin{table}[h]
\caption{Values of the factor $c_{y,z}$.}\label{tab1}
  \begin{tabular}{ | l | l | l | l | l | }
    \hline
    &        $z=1$         & $z=2$      & $z=3$        & $z=5$ \\ \hline
    $y=2$ &  1.2248... &   0            &    0           & 0\\ \hline
    $y=3$ &  2.0554... &   0.4315... &      0         & 0\\ \hline
    $y=4$ &  2.4496... &   0.5242... &      0         & 0\\ \hline
    $y=5$ &  2.9541... &   0.8402... & 0.2351... & 0\\ \hline
    $y=6$ &  3.2477...   &   0.9263... & 0.2574... & 0\\ \hline
    $y=7$ &  3.6441...   &   1.1573... & 0.4321... & 0.1544... \\ \hline
     \end{tabular}
\end{table}

The algorithm for calculating the values in Table \ref{tab1} is described in Section \ref{secacomp}.
In Section \ref{secA}, we derive the following two theorems from Theorems \ref{AF} and \ref{T2}. 

\begin{theorem}\label{thmAD}
For $x\ge y \ge z \ge 3/2$ we have
\begin{equation}\label{thmADeq}
A(x,y,z)=\pi(\sqrt{xy},z)+ x d(u,v) e^{\gamma} \Pi(z)+ \frac{\tau_{y,z} x}{\log xy}+ O\left(\frac{x \log y}{\log^2 x \log z}\right),
\end{equation}
where $\tau_{y,z} = a_{y,z}-C\Pi(z) \log (y/z)$ and $a_{y,z}$ is as in Theorem \ref{thmA}. We have
\begin{equation}\label{taueq2}
\tau_{y,z} = C\Pi(z) \left(1+\eta_{y,z}\right), \qquad  |\eta_{y,z}| \le \mathcal{E}(z)+\mathcal{E}(y).
\end{equation}
\end{theorem}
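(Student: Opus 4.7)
The plan is to derive Theorem \ref{thmAD} from Theorem \ref{AF} via a Buchstab-type recursion for $A(x,y,z)$. For $n \ge 2$, writing $n = pm$ with $p = P^-(n)$ and $P^-(m) \ge p$, the identity $F(pm) = \max(F(m), p^2 m)$ (with $F(1) := 1$; obtained by partitioning divisors of $pm$ by divisibility by $p$) converts $F(n) \le xy$ into $F(m) \le xy$ and $p^2 m \le xy$. Combined with $pm \le x$, the binding upper bound on $m$ is $x/p$ for $p \le y$ and $xy/p^2$ for $p > y$, in either case preserving $x'y' = xy$. Thus
\begin{equation*}
A(x,y,z) = 1 + \sum_{z<p\le y} A(x/p, py, p-0) + \sum_{y<p\le \sqrt{xy}} A(xy/p^2, p^2, p-0),
\end{equation*}
and subtracting \eqref{Dxyzp} yields, with $\Delta := A - D$,
\begin{equation*}
\Delta(x,y,z) = \sum_{z<p\le y} \Delta(x/p, py, p-0) + \sum_{y<p\le \sqrt{xy}} A(xy/p^2, p^2, p-0).
\end{equation*}

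Next I would evaluate each sum. The second sum equals $A(x,y,y) - 1$, as it counts integers in $A(x,y,z)$ with smallest prime factor exceeding $y$. By Theorem \ref{duv}, $d(u,v) = 0$ when $u = v$, so the target formula reduces to $A(x,y,y) = \pi(\sqrt{xy}, y) + C\Pi(y) x/\log(xy) + O$. For the first sum, the inductive estimate (on $\log x$) at the argument $(x/p, py, p)$ gives $\Delta(x/p, py, p-0) = \pi(\sqrt{xy}, py) + C\Pi(p)(x/p)/\log(xy) + O$, since $\sqrt{x'y'} = \sqrt{xy}$ and $\log(x'y') = \log(xy)$ are preserved. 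Summing and invoking the Mertens-type identity $\sum_{z<p\le y} \Pi(p)/p = \Pi(z) - \Pi(y) + O(\mathcal{E}(z) + \mathcal{E}(y))$ (from $\Pi(t) \sim e^{-\gamma}/\log t$ via Abel summation) consolidates the smooth pieces to $C\Pi(z) x/\log(xy) + O$. The prime pieces $\pi(\sqrt{xy}, y) + \sum_{z<p\le y} \pi(\sqrt{xy}, py)$ assemble by PNT to $\pi(\sqrt{xy}, z) - (y/\log y - z/\log z) + O$, which, added to the $y/\log y - z/\log z$ already present in the $D$-estimate of Theorem \ref{AF}, recovers the announced $\pi(\sqrt{xy}, z)$, while the main term $xd(u,v) e^\gamma \Pi(z)$ carries over unchanged from $D$.

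The main obstacle is the self-referential structure of the recursion, which I would handle by induction on $\log x$ with a base case at $x \asymp y$. Uniformly maintaining the announced error $O(x \log y/(\log^2 x \log z))$ across recursive levels is the delicate technical point, requiring the PNT-error $\mathcal{E}(t) = \sup_{s\ge t}|\Sigma(s)+\gamma-\log s|$ at each substitution. Finally, identifying $\tau_{y,z} = a_{y,z} - C\Pi(z) \log(y/z) = C\Pi(z)(1+\eta_{y,z})$ with $|\eta_{y,z}| \le \mathcal{E}(y) + \mathcal{E}(z)$ parallels the identity $\beta_{y,z} = c_{y,z} - C\Pi(z)\log(y/z)$ of Theorem \ref{AF}, with $a_{y,z}$ defined in the companion Theorem \ref{thmA} by a convolution sum analogous to \eqref{tdensecon}.
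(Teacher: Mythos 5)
Your recursion
$$A(x,y,z) = 1 + \sum_{z<p\le y} A(x/p, py, p-0) + \sum_{y<p\le \sqrt{xy}} A(xy/p^2, p^2, p-0)$$
is a correct identity (and a nice observation, resting on $F(pm)=\max(F(m),p^2m)$), but it leads you down a genuinely different road from the paper, and along that road there are real gaps. The paper does \emph{not} iterate a Buchstab recursion: it writes each $n\in\mathcal{A}(x,y,z)\setminus\mathcal{D}(x,y,z)$ in the two-level form $n=mpr$ with $p$ the ``jump prime'' where the divisor ratio first exceeds $y$, obtaining the one-shot product decomposition $(A-D)(x,y,z)=\sum_{y<p\le\sqrt{xy}}A'(p/y,y,z)\,D(xy/p^2,p^2,p-1)$, then applies the known bounds for $A'$ and for $D$ (Theorems \ref{AF} and \ref{T2}) with a single split of the $p$-range at $M=\max(y,(xy)^{1/5})$. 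There is no induction and no error accumulation across levels.

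The specific problems with your inductive scheme are the following. First, the claim that the prime pieces ``assemble by PNT to $\pi(\sqrt{xy},z)-(y/\log y-z/\log z)$'' does not hold. Writing out the inductive hypothesis for $\Delta=A-D$ at $(x/p,py,p{-}0)$ and summing over $z<p\le y$ produces $\sum_{z<p\le y}\pi(\sqrt{xy},p-1)\approx\pi(y,z)\,\pi(\sqrt{xy})$, which neither telescopes nor cancels: for the consistency of the scheme one has to check, prime by prime, that this quantity is dominated by the per-term error $O\bigl(\tfrac{(x/p)\log py}{\log^2(x/p)\log(p-1)}\bigr)$, and the sum of those per-term errors then exceeds the target error $O\bigl(\tfrac{x\log y}{\log^2 x\log z}\bigr)$ by a factor $\log\frac{\log y}{\log z}$. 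So the naive propagation does not close. Second, the recursion preserves $x'y'=xy$, so ``induction on $\log x$'' needs a genuine well-founded parameter, and the recursive arguments leave the range $x'\ge y'\ge z'$ exactly where it matters: $(x/p,py,p{-}0)$ has $x'<y'$ once $p>\sqrt{x/y}$, and $(xy/p^2,p^2,p{-}0)$ has $x'<y'$ once $p>(xy)^{1/4}$. None of these boundary regimes is addressed, nor is an independent base case for $A(x,y,y)$ supplied (invoking ``the target formula with $u=v$'' is circular). Third, the derivation of $\eqref{taueq2}$ in the paper is not a formal analogy with $\beta_{y,z}$: it equates the two estimates of Theorems \ref{thmAD} and \ref{thmA} as $x\to\infty$, applies Theorem \ref{duv}, and reads off $C\Pi(z)\log(y/z)+\tau_{y,z}=a_{y,z}$, with $\eqref{taueq2}$ then following from $\eqref{ateq2}$; your sketch asserts the identity without this comparison step.

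In short, the recursion identity is sound, but neither the error propagation nor the prime-term bookkeeping works as sketched, and the route diverges from the paper's (considerably simpler) one-shot factorisation argument.
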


\begin{theorem}\label{thmA}
For $y\ge 1$,  $z \ge 1$, $x\ge \max(2,y,z)$,  we have
\begin{equation}\label{Aasymp1}
A(x,y,z) =1+\pi(\sqrt{xy},z)+ \frac{a_{y,z} x}{\log xy}\left(1+O\left(\frac{1}{\log xy } + \frac{\log^2 2zy}{\log^2 xy}\right)\right).
\end{equation}
If $y\ge z \ge 1$,
\begin{equation}\label{adef2}
 a_{y,z}=C\Pi(z) \left( 1 -\gamma +\log y -\Sigma(z)\right)+  C\int_1^\infty A(t,y,z) g(yt) \frac{dt}{t^2}  ,
\end{equation}
where
$ g(t) =(\Sigma(t) +\gamma - \log t  ) \Pi(t) .$
If $z \ge y\ge 1$, then $a_{y,z} =\frac{y}{z} a_{z,z}$.

For $y\ge z \ge 1$,
\begin{align}
a_{y,z} & =C\Pi(z) \left(1-\gamma + \log y  -\Sigma(z)+\delta_{y,z}\right) \qquad  & |\delta_{y,z}| \le \mathcal{E}(y),\label{ateq1} \\
a_{y,z} & = C\Pi(z) \left(1+ \log(y/z)+\eta_{y,z}\right) \qquad   & |\eta_{y,z}| \le \mathcal{E}(z)+\mathcal{E}(y).\label{ateq2}
\end{align}
For $y\ge 1$, $z\ge 1$, 
\begin{equation}\label{ateq4}
 a_{y,z} \asymp  \Pi(z) \log(1+y/z).
\end{equation}
\end{theorem}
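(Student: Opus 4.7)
The plan is to derive the asymptotic \eqref{Aasymp1} and the bound \eqref{ateq2} from Theorem \ref{thmAD}, then to establish the integral representation \eqref{adef2} by a Buchstab-style decomposition, and finally to deduce the sharper bound \eqref{ateq1}, the reduction for $z\ge y$, and the size estimate \eqref{ateq4}. I would handle the case $z\ge y$ first by a soft observation: for any $n>1$ with $P^-(n)>z$ one has $F(n)\ge n P^-(n)>nz$, so when $y\le z$ the bound $F(n)\le xy$ already forces $n<xy/z\le x$, whence $A(x,y,z)=A(xy/z,z,z)$. Matching the resulting asymptotic expansion of $A(xy/z,z,z)$ with \eqref{Aasymp1} for $A(x,y,z)$ forces $a_{y,z}=(y/z)a_{z,z}$.

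Assuming from now on $y\ge z$, I would deduce \eqref{Aasymp1} and \eqref{ateq2} by combining the two main terms of Theorem \ref{thmAD} into a single expression of the form $a_{y,z}x/\log xy$. By Theorem \ref{duv},
$$xd(u,v)e^{\gamma}\Pi(z)=\frac{C(1-u/v)\,x\,\Pi(z)}{u+1}\bigl(1+O(1/u^{2})\bigr),$$
and the algebraic identities $1-u/v=\log(y/z)/\log y$ and $u+1=\log xy/\log y$ give $(1-u/v)/(u+1)=\log(y/z)/\log xy$. Adding $\tau_{y,z}x/\log xy$ produces $(\tau_{y,z}+C\Pi(z)\log(y/z))x/\log xy$; setting $a_{y,z}:=\tau_{y,z}+C\Pi(z)\log(y/z)$ yields \eqref{Aasymp1}, with the summand $n=1$ accounting for the explicit $+1$. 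The bound \eqref{ateq2} then follows immediately from \eqref{taueq2}.

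For the integral representation \eqref{adef2} I would apply a Buchstab identity obtained by writing each $n>1$ counted in $A(x,y,z)$ as $n=p^{a}m$ with $p=P^-(n)>z$, $a\ge 1$, and either $m=1$ or $P^-(m)>p$. Since $F(p^{a}m)=\max(pn,F(m))$, the constraint $F(n)\le xy$ splits into $p^{a+1}m\le xy$ together with $F(m)\le xy$, producing
$$A(x,y,z)=1+\sum_{p>z}\sum_{a\ge 1}A\bigl(M_{p,a},\,xy/M_{p,a},\,p\bigr),\qquad M_{p,a}=\min(x/p^{a},\,xy/p^{a+1}).$$
Substituting the asymptotic \eqref{Aasymp1} on the right, evaluating the resulting Mertens-type sums over $p$, and rearranging the $m$-dependent contributions as a Stieltjes integral with integrator $A(t,y,z)$ should yield \eqref{adef2}: the leading part $C\Pi(z)(1-\gamma+\log y-\Sigma(z))$ comes from Mertens' main terms, and the integral $C\int_{1}^{\infty}A(t,y,z)g(yt)dt/t^{2}$ encodes the Mertens remainder $g(yt)/\Pi(yt)=\Sigma(yt)+\gamma-\log(yt)$. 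The main technical obstacle will be tracking which error terms in the Buchstab identity cancel against each other and which accumulate precisely into this integral.

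Finally, \eqref{ateq1} would follow from \eqref{adef2} by using $|g(yt)|\le\mathcal{E}(yt)\Pi(yt)\le\mathcal{E}(y)\Pi(yt)$, valid for $t\ge 1$ since $yt\ge y$ and $\mathcal{E}$ is nonincreasing, together with the bound $\int_{1}^{\infty}A(t,y,z)\Pi(yt)dt/t^{2}\le\Pi(z)$. This last inequality I would prove by inserting the asymptotic \eqref{ateq2}, applying Mertens in the form $\Pi(yt)\asymp 1/\log yt$, and making the substitution $u=\log yt$ to reduce to $\int_{\log y}^{\infty}du/u^{2}=1/\log y$. The two-sided bound \eqref{ateq4} then combines the regimes: for $y\ge z$, \eqref{ateq2} gives $a_{y,z}\asymp\Pi(z)(1+\log(y/z))\asymp\Pi(z)\log(1+y/z)$; for $y\le z$, the reduction $a_{y,z}=(y/z)a_{z,z}$ together with $a_{z,z}\asymp\Pi(z)$ yields $a_{y,z}\asymp(y/z)\Pi(z)\asymp\Pi(z)\log(1+y/z)$ since $y/z\le 1$.
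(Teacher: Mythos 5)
Your reduction for $z\ge y$ is correct and matches the paper, and the algebraic identity $(1-u/v)/(u+1)=\log(y/z)/\log xy$ is right. However, there are several genuine gaps in the rest.

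First, deriving \eqref{ateq2} from \eqref{taueq2} is circular: in the paper, \eqref{taueq2} is the last thing established, obtained \emph{by comparison} with \eqref{ateq2} after the relationship $\tau_{y,z}=a_{y,z}-C\Pi(z)\log(y/z)$ is noted. There is no independent proof of \eqref{taueq2}; before that comparison, all one has is $\tau_{y,z}=\beta_{y,z}+\alpha_{y,z}$ with $\alpha_{y,z}\ll 1/\log z$, which is far weaker. So the bound \eqref{ateq2} must be obtained from the integral representation \eqref{adef2} and not the other way around.

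Second, your Buchstab-style path to \eqref{adef2} is not the paper's route and I do not think it lands where you want. The paper's key tool is Lemma~\ref{lem1}: the identity $\int_1^x A(t,y,z)\,dt/t=\int_1^x D(x/t,yt,z)\,dt/t$ (take $f\equiv1$) combined with the asymptotic for $D$ gives the clean formula $a_{y,z}=\int_1^\infty c_{yt,z}\,dt/t^2$. Substituting \eqref{tdensecon}, splitting $\Sigma(nyt)-\Sigma(z)-\log n=(\Sigma(nyt)+\gamma-\log nyt)+(\log yt-\gamma-\Sigma(z))$, and applying Lemma~\ref{A2} and Lemma~\ref{lem1} again (now with $f(u)=g(u)/u$) yields \eqref{adef2}. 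Your proposed decomposition $n=p^am$ and the claimed recursion would require tracking a two-dimensional family of $A$-values with shifting $y$, and you offer no mechanism for these remainders to telescope into the single integral $C\int A(t,y,z)g(yt)\,dt/t^2$; the acknowledged ``main technical obstacle'' is precisely the missing idea.

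Third, \eqref{ateq1} with the clean constant $|\delta_{y,z}|\le\mathcal{E}(y)$ needs the \emph{exact} identity $\int_1^\infty A(t,y,z)\Pi(yt)\,dt/t^2=\Pi(z)$ (Corollary~\ref{Acor1}), which again comes from Lemma~\ref{lem1} and Lemma~\ref{A2}. Your plan to prove the weaker inequality $\le\Pi(z)$ by inserting \eqref{ateq2} and using Mertens only gives an $\asymp$ relation with an unspecified constant, and is moreover circular since \eqref{ateq2} is itself in question.

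Finally, the passage from Theorem~\ref{thmAD} to \eqref{Aasymp1} is not a matter of ``combining main terms'': Theorem~\ref{thmAD} has an \emph{absolute} error $O(x\log y/(\log^2 x\log z))$ and no explicit $+1$. Converting this to the \emph{relative} error in \eqref{Aasymp1}, with the $+1$ made explicit, requires first establishing the lower bound in \eqref{ateq4} so that the main term dominates, and then the bootstrap the paper carries out via \eqref{Aabserror} and \eqref{secondUB} (using the latter in place of the weaker a priori bound \eqref{firstUB}). Deriving \eqref{ateq4} from \eqref{ateq2} as you suggest is also delicate for small $y/z$, because $\mathcal{E}(z)+\mathcal{E}(y)$ need not be less than $1+\log(y/z)$; the paper instead works directly from \eqref{adef2} with the explicit two-sided bounds $-0.12<\Sigma(t)+\gamma-\log t<0.73$.
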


\begin{table}[h] \caption{Values of the factor $a_{y,z}$.}\label{tab2}
  \begin{tabular}{ | l | l | l | l | l | }
    \hline
              & $z=1$           &  $z=2$        & $z=3$        & $z=5$ \\ \hline
    $y=1$ &  1.5379...   &   0.4178...  &  0.1583...  &   0.0831... \\ \hline
    $y=2$ &  3.0759...   &   0.8357...  &  0.3167...  &   0.1662... \\ \hline
    $y=3$ &  3.9184...   &   1.2535...  &  0.4751...  &   0.2493... \\ \hline
    $y=4$ &  4.4804...   &   1.5153...  &  0.6335...  &   0.3325... \\ \hline
    $y=5$ &  4.9557...   &   1.7525...  &  0.7918...  &   0.4156... \\ \hline
    $y=6$ &  5.3297...   &   1.9280...  &  0.9015...  &   0.4987... \\ \hline
     \end{tabular}
\end{table}

Theorem \ref{thm1} follows from Theorem \ref{thmA} with $y=z=1$. Theorems \ref{thmAD} and \ref{thmA} sharpen earlier results by
Saias \cite[Lem. 5]{AEDD} and the author \cite[Thm. 1]{IDD1}. The algorithm for calculating the values in Table \ref{tab2} is 
described in Section \ref{secacomp}.

\section{Proof of Theorem \ref{duv}}\label{secproofduv}
Let
$$\Phi(x,z) = |\{n\le x: P^-(n)>z \}|.$$
Note that for $n \ge 2$ with  prime factorization $n=p_1 \cdots p_k$, where  $p_1\le p_2\le \ldots \le p_k$, we have
$$
\frac{F(n)}{n}\le y \quad \Leftrightarrow \quad p_{j+1} \le y \prod_{1\le i\le j} p_i \quad (0\le j<k).  
$$

\begin{lemma}\label{lem0}
For $x\ge 1$, $y\ge z \ge 1$ we have 
$$\Phi(x,z) = \sum_{n\le x}\chi_{y,z}(n) \Phi(x/n, yn).$$
\end{lemma}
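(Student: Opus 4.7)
The plan is to prove Lemma~\ref{lem0} by an explicit bijection. I will show that every integer $N\le x$ with $P^-(N)>z$ corresponds to a unique pair $(n,m)$ satisfying $N=nm$, $n\in\mathcal{D}_{y,z}$, $n\le x$, $P^-(m)>yn$, and $m\le x/n$. Summing $\Phi(x/n,yn)=|\{m\le x/n:P^-(m)>yn\}|$ over $n\in\mathcal{D}_{y,z}$ with $n\le x$ will then yield $\Phi(x,z)$, which is the claimed identity.

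To construct the map, I will use the characterization stated just before the lemma: for $n'=p_1\cdots p_i$ with $p_1\le\cdots\le p_i$, one has $F(n')/n'\le y$ if and only if $p_{s+1}\le y\,p_1\cdots p_s$ for every $0\le s<i$. Given $N=p_1\cdots p_k$ in non-decreasing order with $P^-(N)>z$, I would let $j$ be the largest index in $\{0,\ldots,k\}$ for which $p_{s+1}\le y\,p_1\cdots p_s$ holds for every $s<j$ (vacuously true at $j=0$), and set $n=p_1\cdots p_j$, $m=p_{j+1}\cdots p_k$, with the empty-product conventions $n=1$ when $j=0$ and $m=1$ when $j=k$. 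By the characterization $F(n)/n\le y$, and $P^-(n)\ge P^-(N)>z$ (or $P^-(1)=\infty>z$), so $n\in\mathcal{D}_{y,z}$. The maximality of $j$ forces $p_{j+1}>y\,n$ whenever $j<k$, giving $P^-(m)>yn$; and $nm=N\le x$ produces $m\le x/n$.

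For the inverse, given any pair $(n,m)$ satisfying the listed conditions, I take $N=nm$. Because $P^-(m)>yn\ge P^+(n)$ (using $y\ge 1$), concatenating the sorted factorizations of $n$ and $m$ produces the sorted factorization of $N$, and $P^-(N)>z$ holds (via $P^-(m)>y\ge z$ in the degenerate case $n=1$). Re-running the algorithm on $N$ recovers the same $j$: the prefix inequalities through $s=j-1$ are exactly the defining condition of $\mathcal{D}_{y,z}$ for $n$, while the strict inequality $P^-(m)>yn$ makes the relation fail at $s=j$, and strictness here is automatic as the logical negation of the weak inequality $p_{j+1}\le yn$. The identity then follows by counting.

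I do not expect a serious obstacle: the only delicate points are the bookkeeping around the boundary cases $j=0$ (where one must invoke $y\ge z$ to propagate $P^-(m)>y$ to $P^-(N)>z$) and $j=k$ (where $m=1$ trivially satisfies $P^-(1)=\infty>yn$), together with checking that the decomposition is truly unambiguous. Once these are handled, the reduction of $\Phi(x,z)$ to the stated sum is immediate.
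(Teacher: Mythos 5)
Your proof is correct and follows essentially the same route as the paper's one-line argument, which asserts the unique factorization $m=nr$ with $n\in\mathcal{D}_{y,z}$ and $P^-(r)>ny$ and counts the resulting pairs. You have simply spelled out the bijection explicitly, including the boundary cases and the use of $y\ge z$, which the paper leaves implicit.
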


\begin{proof}
Every $m$ counted in $\Phi(x,z)$ factors uniquely as $m=nr$, where $n\in \mathcal{D}_{y,z}$  and $P^-(r)>ny$.
\end{proof}

\begin{lemma}\label{lem2}
For $x\ge 1$,  $y\ge z \ge 1$ we have 
\begin{equation*}
D(x,y,z)-1=\Phi(x,z)-\Phi(x,y)-\sum_{z< n\le \sqrt{x/y}}\chi_{y,z}(n) \bigl(\Phi(x/n, yn)-1\bigr)
\end{equation*}
\end{lemma}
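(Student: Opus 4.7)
The plan is to derive the formula by splitting the identity of Lemma 1 according to the size of the summation variable $n$ and showing that the contribution from large $n$ essentially gives $D(x,y,z)$.

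First I would observe that by definition $D(x,y,z)=\sum_{n\le x}\chi_{y,z}(n)$, since $\chi_{y,z}(1)=1$ (as $F(1)/1=1\le y$ and $P^-(1)=\infty>z$). Next, the key range observation: if $n>\sqrt{x/y}$, then $yn>x/n$, so any integer $m\le x/n$ with $P^-(m)>yn$ must have $m=1$ (a nontrivial $m$ would satisfy $m\ge P^-(m)>yn>x/n$). Hence $\Phi(x/n,yn)=1$ in that range. Also, for $1<n\le z$ we have $P^-(n)\le n\le z$, so $\chi_{y,z}(n)=0$.

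Starting from Lemma 1, I would split the sum at $n=1$, $z<n\le\sqrt{x/y}$, and $\sqrt{x/y}<n\le x$:
\begin{equation*}
\Phi(x,z)=\Phi(x,y)+\sum_{z<n\le\sqrt{x/y}}\chi_{y,z}(n)\Phi(x/n,yn)+\sum_{\sqrt{x/y}<n\le x}\chi_{y,z}(n).
\end{equation*}
The last sum equals $D(x,y,z)-1-\sum_{z<n\le\sqrt{x/y}}\chi_{y,z}(n)$, using $\chi_{y,z}(1)=1$ and the vanishing of $\chi_{y,z}$ on $(1,z]$. Substituting this in and rearranging yields exactly
\begin{equation*}
D(x,y,z)-1=\Phi(x,z)-\Phi(x,y)-\sum_{z<n\le\sqrt{x/y}}\chi_{y,z}(n)\bigl(\Phi(x/n,yn)-1\bigr).
\end{equation*}

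There is no real obstacle here beyond bookkeeping; the only subtle point is verifying $\Phi(x/n,yn)=1$ for $n>\sqrt{x/y}$, which is just the elementary observation that a prime exceeding $yn$ cannot be $\le x/n$ under that constraint. Everything else is a direct manipulation of Lemma 1.
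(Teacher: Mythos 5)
Your proof is correct and follows exactly the same route as the paper: the paper's proof of Lemma~\ref{lem2} cites Lemma~\ref{lem0} together with the three observations $1\in\mathcal{D}_{y,z}$, $2\le n\le z\Rightarrow n\notin\mathcal{D}_{y,z}$, and $n>\sqrt{x/y}\Rightarrow\Phi(x/n,yn)=1$, which are precisely the facts you establish and then use to rearrange the identity. Your write-up simply makes the bookkeeping explicit.
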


\begin{proof}
This follows from Lemma \ref{lem0}, since $1\in \mathcal{D}_{y,z}$, $2\le n \le z \Rightarrow
n \notin \mathcal{D}_{y,z}$, and $n>\sqrt{x/y} \Rightarrow \Phi(x/n, yn)=1$.
\end{proof}

\begin{lemma}\label{Phi}
For $x\ge 1$, $z\ge 3/2$, we have
\begin{equation*}
\Phi(x,z) -1 =  x \Pi(z)
+\frac{x(\omega(v)-e^{-\gamma})}{\log z}-\left. \frac{z}{\log z}\right|_{x\ge z}
+ O\left(\frac{x e^{-v/3}}{\log^2 z}\right),
\end{equation*}
where $v=\frac{\log x}{\log z}$ and $\omega(v)$ is Buchstab's function. 
\end{lemma}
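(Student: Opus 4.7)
The plan is to deduce this estimate from the classical de Bruijn-type asymptotic for $\Phi(x,z)$, rewritten via Mertens' theorem with prime number theorem error. The target intermediate statement is
$$
\Phi(x,z) = 1 + \frac{x\omega(v)}{\log z} - \frac{z}{\log z}\bigg|_{x\ge z} + O\!\left(\frac{xe^{-v/3}}{\log^2 z}\right),
$$
which I would prove by induction on $v$. The base range $1 \le v \le 2$ exploits the exact identity $\Phi(x,z) = 1 + \pi(x) - \pi(z)$ together with $\omega(v) = 1/v$ and the PNT with error $\mathcal{E}$ from \eqref{etadef}. For $v > 2$, one iterates the Buchstab recurrence
$$
\Phi(x,z) = \Phi(x, \sqrt{x}) + \sum_{z < p \le \sqrt{x}} \Phi(x/p, p^-),
$$
and applies partial summation via PNT, recovering $\omega(v)$ through its defining integral equation. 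The super-exponential decay $|\omega(v) - e^{-\gamma}| \ll 1/\Gamma(v+1)$ ensures that the $e^{-v/3}$ saving propagates through the induction.

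Once this intermediate estimate is in hand, I would convert the main term to $x\Pi(z)$ by splitting $\omega(v) = e^{-\gamma} + (\omega(v) - e^{-\gamma})$ and invoking Mertens' theorem in the refined form $\Pi(z) \log z = e^{-\gamma}(1 + O(\mathcal{E}(z)))$, which yields
$$
\frac{xe^{-\gamma}}{\log z} = x\Pi(z) + O\!\left(\frac{x\mathcal{E}(z)}{\log z}\right).
$$
Substituting into the de Bruijn-type estimate produces the claimed decomposition, with the Buchstab correction $\frac{x(\omega(v)-e^{-\gamma})}{\log z}$ emerging as a separate term.

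The main obstacle is verifying that the Mertens error $O(x\mathcal{E}(z)/\log z)$ is absorbed into the stated error $O(xe^{-v/3}/\log^2 z)$ uniformly over the range $z \ge 3/2$ and $v \ge 1$. Since $\mathcal{E}(z) \ll_\varepsilon \exp\{-(\log z)^{3/5-\varepsilon}\}$ decays faster than any negative power of $\log z$, the ratio $\mathcal{E}(z)\log z \cdot e^{v/3}$ is controlled throughout the bulk of the parameter space; the edge cases of bounded $z$ or very large $v$ are handled directly, either because the main term dominates or because $xe^{-v/3}/\log^2 z$ is already larger than $\Phi(x,z)$ itself. A secondary bookkeeping point is the treatment of $\omega$ for $v < 1$, where the convention $\omega(v) = 0$ and the condition $x < z$ reduce both sides to $O(x\mathcal{E}(z)/\log z)$, which is absorbed into the stated error.
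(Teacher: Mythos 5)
The paper proves this lemma by citing Lemma~6 of \cite{OMB}, so there is no in-paper argument to compare against; you are attempting a proof from scratch. Unfortunately there is a fatal gap at the very start: your ``target intermediate statement''
\[
\Phi(x,z) = 1 + \frac{x\omega(v)}{\log z} - \left.\frac{z}{\log z}\right|_{x\ge z} + O\!\left(\frac{xe^{-v/3}}{\log^2 z}\right)
\]
is false. Fix $z$ (say $z=3$) and let $x\to\infty$. Then $\Phi(x,z)-1 = x\Pi(z)+O(1)$ by the Legendre sieve, while $\omega(v)\to e^{-\gamma}$ superexponentially fast, so the right-hand side is $xe^{-\gamma}/\log z + o(x)$. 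Since $\Pi(z)\ne e^{-\gamma}/\log z$ for fixed $z$ (e.g.\ $\Pi(2)=\tfrac12$ versus $e^{-\gamma}/\log 2\approx 0.81$), the two sides differ by a quantity $\asymp x$, which is not $O(xe^{-v/3}/\log^2 z)=o(x)$. The classical de Bruijn estimate with $x\omega(v)/\log z$ as the main term carries the weaker, and in this regime unavoidable, error $O(x/\log^2 z)$ precisely because of this discrepancy. So the induction you propose cannot establish that intermediate statement, and the issue is not one of bookkeeping at the edges.

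Your own discussion of the ``Mertens error'' obstacle is the right instinct, but the resolution you sketch does not hold. For bounded $z$ and large $v$, the Mertens error $x\mathcal{E}(z)/\log z$ is of size $\asymp x$, while the target error $xe^{-v/3}/\log^2 z$ is $\ll x e^{-v/3}$; the former vastly dominates. Nor is it true that ``$xe^{-v/3}/\log^2 z$ is already larger than $\Phi(x,z)$'' there --- in fact $\Phi(x,z)\asymp x$ for bounded $z$. The point of stating Lemma~\ref{Phi} with $x\Pi(z)$ as the main term (rather than $xe^{-\gamma}/\log z$ or $x\omega(v)/\log z$) is exactly that $\Pi(z)$ is the correct density as $v\to\infty$, which is what permits the superexponential $e^{-v/3}$ error. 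A working Buchstab-iteration proof must therefore carry the Euler product $\Pi(z)$ through the recursion and isolate the correction $x(\omega(v)-e^{-\gamma})/\log z$ as a separate, decaying term from the outset, never passing through the inexact replacement $\Pi(z)\approx e^{-\gamma}/\log z$. This is the route taken in the cited Lemma~6 of \cite{OMB}; see also the refined form of de Bruijn's theorem in Tenenbaum's book, Theorem~III.6.4, where the main term is likewise $x\Pi(z)\,e^\gamma\omega(v)$.
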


\begin{proof}
This is Lemma 6 of \cite{OMB}.
\end{proof}

We write $d_r(u):=d(u,u/r)$ for $0<r\le 1$ and $d_0(u):= e^{-\gamma} d(u)$.

\begin{lemma}\label{lem3}
For $u \ge 0$, $0<r\le 1$ we have
$$d_r(u)=\omega(u/r)-r\omega(u)-\int_{0}^{u} \frac{d_r(t)}{t+1}\, \omega\left(\frac{u-t}{t+1}\right) \mathrm{d}t .$$
\end{lemma}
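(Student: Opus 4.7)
The plan is to extract the stated integral equation as the leading-order form, in the limit $x\to\infty$ with $u = \log x/\log y$ and $v = \log x/\log z$ held fixed, of the combinatorial identity in Lemma \ref{lem2}. Both sides of the target equation depend only on $u$ and $r = u/v$, so matching leading behaviour on the two sides of Lemma \ref{lem2} (divided by the natural scale $x/\log z$) yields the desired exact identity in the limit.

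First I would substitute \eqref{IDD1T1} for the left-hand side and apply Lemma \ref{Phi} to each $\Phi$-term on the right-hand side of Lemma \ref{lem2}. Mertens' theorem (in the form $\Pi(z)\log z \to e^{-\gamma}$) then gives $\Phi(x,z)/(x/\log z)\to\omega(v)=\omega(u/r)$ and $\Phi(x,y)/(x/\log z)\to r\,\omega(u)$, using $\log z/\log y = r$. These are precisely the first two terms of the target identity.

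For the Buchstab-type sum I would change variables via $n = y^{t}$, so the range $z < n \le \sqrt{x/y}$ becomes $r < t \le (u-1)/2$. Differentiating \eqref{IDD1T1} gives the asymptotic density $y^t \log y \cdot d_r(t)/\log z$ of $\mathcal{D}_{y,z}$ per unit $t$, while Lemma \ref{Phi} gives $\Phi(x/n,yn)-1\sim x\,\omega((u-t)/(t+1))/(n(t+1)\log y)$. The factors of $y^t$, $n$ and $\log y$ cancel cleanly, and the sum passes to $(x/\log z)\int_{r}^{(u-1)/2} d_r(t)\omega((u-t)/(t+1))/(t+1)\,dt + o(x/\log z)$. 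Dividing Lemma \ref{lem2} by $x/\log z$ and taking the limit yields the identity with integration range $[r,(u-1)/2]$. Extension to the full range $[0,u]$ in the statement is free: on $((u-1)/2,u]$ the factor $\omega((u-t)/(t+1))$ vanishes because its argument is smaller than $1$, while on $[0,r)$ the natural extension of $d_r$ vanishes, consistent with $D(y^t,y,z)=1$ for $t<r$.

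The main obstacle is the sum-to-integral step: one needs the error terms in \eqref{IDD1T1} and Lemma \ref{Phi} to be uniform across the whole summation range, and the Abel-summation argument must cope with the borderline contribution $yn/\log(yn)$ from Lemma \ref{Phi} near $n\approx\sqrt{x/y}$. A subsidiary technicality is the convention for $d_r$ on $[0,r)$, which must be pinned down (by $d_r\equiv 0$ there) so the ``extended'' integral equals the physical one.
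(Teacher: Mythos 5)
Your proposal follows the paper's own argument almost exactly: start from Lemma \ref{lem2}, replace $D(x,y,z)$ by \eqref{IDD1T1}, replace the two outer $\Phi$-terms via Lemma \ref{Phi} together with Mertens, then estimate $\Phi(x/n,yn)$ inside the sum and use partial summation (your ``density of $\mathcal{D}_{y,z}$ per unit $t$'' is precisely the partial-summation step with $D(\cdot,y,z)$ as the summatory function, again estimated by \eqref{IDD1T1}) to pass to the integral, finally letting $x\to\infty$ with $u$ and $r$ fixed. You also correctly identify the boundary conventions ($d_r\equiv 0$ on $[0,r)$, $\omega$ vanishing for argument below $1$) that let the physical range $[r,(u-1)/2]$ be replaced by $[0,u]$, and you flag the same technical points (uniformity of errors, the $yn/\log(yn)$ contribution) that the paper leaves implicit.
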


\begin{proof}
When $u\le 1$, this follows from the initial conditions for $d(u,v)$ \cite[Eq. (4)]{IDD1}.
Assume $u>1$ and $0<r\le 1$ are fixed. 
In Lemma \ref{lem2}, we estimate $D(x,y,z)$ with \eqref{IDD1T1} and apply 
$$
\Phi(x,z)= \frac{x \omega(v) - z}{\log z} + O\left(\frac{x}{\log^2 z}\right), \quad (x\ge z \ge 3/2),
$$
which follows from Lemma \ref{Phi}. 
In the sum of Lemma \ref{lem2}, first estimate $\Phi(x/n, yn)$, 
then use partial summation and estimate $D(x,y,z)$ with \eqref{IDD1T1}.
\end{proof}

Note that as $r\to 0$, the equation in Lemma \ref{lem3} turns into the equation for
$d(u)$ in \eqref{dinteq}, multiplied by $e^{-\gamma}$. 

In order to calculate Laplace transforms, we make the following change of variables. Let
\begin{equation}\label{Frdef}
 G_r(y)=  d_r(e^y-1)  \quad (0\le r \le 1),
\end{equation}
\begin{equation}\label{Omegardef}
\Omega_r(y)=\omega\left(\frac{e^y-1}{r}\right) \quad (0< r \le 1), \qquad \Omega_0(y)=e^{-\gamma}.
\end{equation}
\begin{lemma}\label{lem4}
For $0\le  r \le 1$ and $\mathrm{Re}(s) >0$ we have
$$ \widehat{G}_r(s) =\frac{\widehat{\Omega}_r(s)-r \widehat{\Omega}_1(s)}{1+\widehat{\Omega}_1(s)}.$$
\end{lemma}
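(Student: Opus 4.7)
The plan is to convert the integral equation of Lemma \ref{lem3} into a convolution equation via the exponential substitution, and then apply the Laplace transform.

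First I would substitute $u=e^y-1$ in the equation of Lemma \ref{lem3}. The left side becomes $G_r(y)$ by definition \eqref{Frdef}. The term $\omega(u/r)$ becomes $\Omega_r(y)$ and $r\omega(u)=r\omega(e^y-1)=r\Omega_1(y)$ by definition \eqref{Omegardef}. For the integral, I would make the change of variables $t=e^\tau-1$, so that $dt=e^\tau\,d\tau$, $t+1=e^\tau$, and
\[
\frac{u-t}{t+1}=\frac{e^y-e^\tau}{e^\tau}=e^{y-\tau}-1.
\]
Therefore $\omega\bigl((u-t)/(t+1)\bigr)=\Omega_1(y-\tau)$, while
\[
\frac{d_r(t)}{t+1}\,dt=\frac{d_r(e^\tau-1)}{e^\tau}\,e^\tau\,d\tau=G_r(\tau)\,d\tau.
\]
After substitution, Lemma \ref{lem3} reads
\[
G_r(y)=\Omega_r(y)-r\,\Omega_1(y)-\int_0^y G_r(\tau)\,\Omega_1(y-\tau)\,d\tau,
\]
which is a genuine convolution identity $G_r=\Omega_r-r\,\Omega_1-G_r*\Omega_1$ on $[0,\infty)$.

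Next I would take the Laplace transform in $y$. Since $d(u)\ll 1/(u+1)$ by \eqref{IDD3T1}, $d_r(u)$ is bounded on $[0,\infty)$, hence $G_r$ is bounded; similarly $\Omega_r$ is bounded (tending to $e^{-\gamma}$). Thus $\widehat{G}_r(s)$ and $\widehat{\Omega}_r(s)$ exist for $\mathrm{Re}(s)>0$, and the convolution becomes a product. The transformed identity is
\[
\widehat{G}_r(s)=\widehat{\Omega}_r(s)-r\,\widehat{\Omega}_1(s)-\widehat{G}_r(s)\,\widehat{\Omega}_1(s),
\]
which I would solve algebraically for $\widehat{G}_r(s)$ to obtain the stated formula. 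The case $r=0$ is handled by noting $d_0(u)=e^{-\gamma}d(u)$ and $\Omega_0(y)\equiv e^{-\gamma}$, so the same derivation applies (the limiting form of Lemma \ref{lem3} is the equation \eqref{dinteq} multiplied by $e^{-\gamma}$, as noted just after Lemma \ref{lem3}).

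The only mild obstacle is justifying the use of Laplace transforms — i.e., confirming that the transforms converge and that the transform of the convolution is the product of transforms. Both are immediate from boundedness of $G_r$ and $\Omega_r$ on $[0,\infty)$ together with the finite interval of integration in the convolution, so the proof is essentially the algebraic manipulation above.
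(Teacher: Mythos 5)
Your proof is correct and follows essentially the same route as the paper: the same exponential change of variables $u=e^y-1$, $t=e^\tau-1$ converting Lemma \ref{lem3} into the convolution identity $G_r=\Omega_r-r\,\Omega_1-G_r*\Omega_1$, followed by taking Laplace transforms and solving algebraically. The added remarks on convergence of the transforms and on the $r=0$ case are sound and merely make explicit what the paper leaves implicit.
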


\begin{proof}
In Lemma \ref{lem3}, changing variables $u=e^\lambda-1$, $t=e^\mu-1$, leads to 
$$
G_r(\lambda)=\Omega_r(\lambda)-r\Omega_1(\lambda)-\int_0^\lambda G_r(\mu) \Omega_1(\lambda-\mu) d\mu.
$$
Calculating the Laplace transform, we have, for $\mathrm{Re}(s)  >0$,
$$
 \widehat{G}_r(s) = \widehat{\Omega}_r(s)-r \widehat{\Omega}_1(s) -\widehat{G}_r(s) \widehat{\Omega}_1(s),
$$
which yields the stated result.
\end{proof}

\begin{proof}[Proof of Theorem \ref{duv}]
Let $r=u/v$, $\lambda=\log(u+1)$, and 
$$
E_r(\lambda):=\Omega_r(\lambda)-e^{-\gamma}-r(\Omega_1(\lambda)-e^{-\gamma}).
$$
Let $G(\lambda):=d(e^\lambda -1)$, where $d(u)$ is as in \eqref{dinteq}.
As in the proof of Lemma \ref{lem4}, equation \eqref{dinteq} leads to
\begin{equation}\label{FLap}
 \widehat{G}(s)= \frac{1}{s(1+\widehat{\Omega}_1(s))},
\end{equation}
for $\mathrm{Re}(s)>0$. With Lemma \ref{lem4}, we find that
$$
 \widehat{G}_r(s) = e^{-\gamma}(1-r) \widehat{G}(s) + \widehat{E}_r(s) s \widehat{G}(s).
$$
We have
\begin{equation}\label{sFseq}
 s\widehat{G}(s) = \widehat{G'}(s)+G(0)=\widehat{G'}(s)+1
\end{equation}
and
\begin{equation}\label{Fplameq}
 G'(\lambda)=d'(e^\lambda-1) e^\lambda =  -C e^{-\lambda}+O\left(e^{-3\lambda}\right),
\end{equation}
by Corollary 5 of \cite{IDD3}.
Thus
\begin{equation}\label{Freq}
G_r(\lambda) =  e^{-\gamma}(1-r)  G(\lambda) + E_r(\lambda) + \int_0^\lambda E_r(\tau) \left(-C e^{-(\lambda-\tau)} +O( e^{-3(\lambda-\tau)}) \right) d\tau.
\end{equation}
Considering the two cases $0\le r\le 1/2$ and $1/2<r \le 1$, we find that for any fixed $A>0$ we have
$$
E_r(\tau) \ll_A r (1-r) e^{-A\tau}.
$$
Moreover, 
$$
\int_0^\infty E_r(\tau) e^\tau d\tau = \int_0^\infty \left[ (\omega(u/r)-e^{-\gamma})-r(\omega(u)-e^{-\gamma})\right] du = 0.
$$
This shows that the integral in \eqref{Freq} amounts to $\ll r(1-r) e^{-3\lambda} $. Thus \eqref{Freq} simplifies to
$$
G_r(\lambda) =  e^{-\gamma}(1-r)  G(\lambda) +O(r(1-r) e^{-3\lambda}),
$$
that is
$$
d_r(u) = e^{-\gamma}(1-r) d(u) + O(r(1-r)u^{-3}).
$$
The first estimate in Theorem \ref{duv} now follows, since $d(u) \asymp 1/u$ for $u\ge 1$. 
The second estimate follows from the first and \eqref{IDD3T1}.
\end{proof}

\section{Proof of Theorem \ref{AF}}

Let 
$$
\delta(x)=
\begin{cases} 1 &\mbox{if } x \ge 0 \\ 
0 & \mbox{if } x<0. \end{cases} 
$$

\begin{lemma}\label{UB}
For $x\ge 1$, $y\ge z \ge 3/2$,
$$ D(x,y,z)-1 \ll \frac{x \log y}{\log xy \log z}.$$
\end{lemma}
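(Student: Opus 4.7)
My plan is to prove Lemma \ref{UB} by combining the asymptotic \eqref{IDD1T1} for $D(x,y,z)$ from \cite{IDD1} with the bound $d(u,v) \ll \log(y/z)/\log(xy)$ that follows from Theorem \ref{duv}. Specifically, Theorem \ref{duv} gives
\[
d(u,v) \ll \frac{1-u/v}{u+1} = \frac{\log(y/z)}{\log y} \cdot \frac{\log y}{\log(xy)} = \frac{\log(y/z)}{\log(xy)},
\]
and substituting into \eqref{IDD1T1} yields, for $x \ge y \ge z \ge 3/2$,
\[
D(x,y,z) - 1 \ll \frac{x \log(y/z)}{\log(xy) \log z} + \frac{y}{\log y} + \frac{x}{\log^2 z}.
\]
The first term is at most $\frac{x \log y}{\log(xy) \log z}$ since $\log(y/z) \le \log y$, and the $y/\log y$ term is absorbed into the claim for $y \le x$ by a routine split on $y \le \sqrt{x}$ versus $y > \sqrt{x}$.

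The edge case $x < y$ is handled directly: for every $n \le x$ with $P^-(n) > z$, each divisor $d$ of $n$ satisfies $d \, P^-(d) \le n^2 \le xy$, so $F(n)/n \le y$ automatically. Hence $D(x,y,z) = \Phi(x,z) \ll x/\log z$, and since $\log y/\log(xy) \ge 1/2$ in this regime, the claim follows.

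The main obstacle is absorbing the error $O(x/\log^2 z)$ from \eqref{IDD1T1} into $\frac{x \log y}{\log(xy) \log z}$, which is automatic when $\log(xy) \lesssim \log y \log z$. In the complementary regime, where $x$ is very large compared to $y^{\log z}$, I would supplement the argument with one application of the Buchstab identity \eqref{Dxyzp}, $D(x,y,z) - 1 = \pi(y,z) + \sum_{z<p\le y}[D(x/p, py, p-0) - 1]$, combined with the elementary sieve bound $D(x/p, py, p-0) \le \Phi(x/p, p-0) \ll (x/p)/\log p$ and the partial-summation estimate $\sum_{z<p\le y} 1/(p \log p) = \log(y/z)/(\log y \log z) + O(1/\log^2 z)$ (from Mertens and the prime number theorem). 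A careful case analysis on the relative sizes of $y$, $z$, and $x$ then yields the uniform bound with constants matching across the boundary of the two regimes.
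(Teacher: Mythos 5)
The paper itself proves Lemma \ref{UB} by a one-line citation to Saias \cite[Lemma 5]{AEDD} (together with the trivial observation that $D(x,y,z)=1$ when $x<z$). Your proposal instead tries to derive the bound from the older asymptotic \eqref{IDD1T1} combined with Theorem \ref{duv}, which is a genuinely different route, but it has a gap that I do not think your proposed patch closes.

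The first part is fine: for $x\ge y$, Theorem \ref{duv} gives $d(u,v)\ll (1-u/v)/(u+1)=\log(y/z)/\log(xy)$, and the edge case $x<y$ reduces to $D(x,y,z)=\Phi(x,z)$. You also correctly identify the obstacle: \eqref{IDD1T1} carries an absolute error $O(x/\log^2 z)$, which is comparable to $x$ when $z$ is bounded, while the target $x\log y/(\log xy\,\log z)$ can be as small as $x/\log x$. But the Buchstab patch you propose does not recover the missing factor. Using $D(x/p,py,p-0)\le\Phi(x/p,p-0)\ll (x/p)/\log p$ together with $\sum_{z<p\le y}1/(p\log p)\asymp\log(y/z)/(\log y\log z)$ yields only
$$D(x,y,z)-1\ll \pi(y,z)+\frac{x\log(y/z)}{\log y\,\log z},$$
with no dependence on $x$ in the denominator. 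Take $z=3/2$ and $y$ fixed with $x\to\infty$: the target is $\asymp x/\log x$, but your Buchstab bound is $\asymp x$, off by a full factor of $\log x$. The crude replacement $D\le\Phi$ throws away precisely the constraint $F(n)/n\le y$, and it is that constraint that produces the $\log y/\log(xy)$ decay in the sharp bound. Iterating Buchstab once more with \eqref{IDD1T1} inside does not help either, because each application of \eqref{IDD1T1} re-introduces an $O((x/p)/\log^2 p)$ error whose sum over $p>z$ is again $\asymp x/\log^2 z$. In short, the route through \eqref{IDD1T1} cannot reach Lemma \ref{UB} in the regime where $\log(xy)\gg\log y\log z$; the paper sidesteps this entirely by quoting Saias's Lemma 5, which is proved by a more careful direct sieve argument. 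If you want a self-contained proof you would need to reproduce that argument (or an equivalent one), not \eqref{IDD1T1}.
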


\begin{proof}
If $x\ge z$, this follows from \cite[Lemma 5]{AEDD}. If $x<z$ then $D(x,y,z)=1$.
\end{proof}

Define
\begin{equation}\label{Edef}
E(x,y,z):=\frac{x \log y}{(\log xy)^2 \log z}.
\end{equation}

\begin{lemma}\label{A1}
For $x\ge 1$, $y\ge z \ge 3/2$,
\begin{multline*}
D(x,y,z)-1 = \Phi(x,z)-\Phi(x,y) +O\left(E(x,y,z)\right) \\
- x\sum_{z<n\le \sqrt{x/y}}
\frac{\chi_{y,z}(n)}{n}\left\{\Pi(yn)+ \frac{1}{\log yn}\left( \omega\left(\frac{\log x/n}{\log yn}\right) -e^{-\gamma} \right) \right\} .
\end{multline*}
\end{lemma}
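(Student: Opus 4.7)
The plan is to apply Lemma~2 as the starting identity, and then replace each term $\Phi(x/n,yn)-1$ appearing in its sum via the asymptotic formula of Lemma~\ref{Phi}. Since the summation range is $z<n\le \sqrt{x/y}$, we always have $x/n\ge yn \ge 3/2$, so the subtracted term $\frac{z}{\log z}\big|_{x\ge z}$ from Lemma~\ref{Phi} is active throughout and becomes $\frac{yn}{\log yn}$. Writing $v_n := \frac{\log(x/n)}{\log yn}$, Lemma~\ref{Phi} gives
\[
\Phi(x/n,yn)-1 = \frac{x}{n}\Pi(yn) + \frac{(x/n)(\omega(v_n)-e^{-\gamma})}{\log yn} - \frac{yn}{\log yn} + O\!\left(\frac{(x/n) e^{-v_n/3}}{(\log yn)^2}\right),
\]
and summing against $\chi_{y,z}(n)$ produces exactly the main sum claimed in Lemma~\ref{A1}, together with a secondary term $\sum \chi_{y,z}(n)\frac{yn}{\log yn}$ and a remainder $O\!\big(\sum \chi_{y,z}(n)\frac{(x/n)e^{-v_n/3}}{(\log yn)^2}\big)$. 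It then suffices to show that both of these are $O(E(x,y,z))$.

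Both estimates will be carried out by partial summation against $N(t) := D(t,y,z)-1 = \sum_{z<n\le t}\chi_{y,z}(n)$, using the upper bound $N(t) \ll \frac{t\log y}{\log(ty)\log z}$ from Lemma~\ref{UB}. For the secondary term, take $f(t) = yt/\log yt$ and write
\[
\sum_{z<n\le T} \chi_{y,z}(n) f(n) = f(T)N(T) - \int_z^T N(t) f'(t)\,dt, \qquad T=\sqrt{x/y}.
\]
The boundary contribution evaluates to $f(T)N(T) \ll \frac{\sqrt{xy}}{\log xy}\cdot \frac{\sqrt{x/y}\log y}{\log xy\log z} \asymp E(x,y,z)$, and since $f'(t)\asymp y/\log yt$, the integral is bounded by $\frac{y\log y}{\log z}\int_z^T \frac{t}{(\log yt)^2}dt \asymp \frac{y\log y}{\log z}\cdot\frac{T^2}{(\log yT)^2} \asymp E(x,y,z)$.

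For the exponential remainder one applies the same scheme with $h(t) := (x/t)e^{-v_t/3}/(\log yt)^2$. The boundary term $h(T)N(T)$ produces $\ll \frac{\sqrt{xy}}{(\log xy)^2}\cdot N(T) \ll E/\log xy$, which is more than safe. The main technical obstacle is the integral $\int_z^T N(t)|h'(t)|\,dt$: one computes
\[
\frac{h'(t)}{h(t)} = \frac{1}{t}\left(-1 + \frac{\log xy}{3(\log yt)^2} - \frac{2}{\log yt}\right),
\]
which changes sign at an interior point $t_0$ where $3(\log yt_0)^2 + 6\log yt_0 = \log xy$, so $h$ is not monotone. The clean way forward is to split the integral at $t_0$, apply $N(t)\le N(T)$ on each piece, and use $\int |h'|\,dt = (h(t_0)-h(z)) + (h(t_0)-h(T))\le 2h(t_0)$; a short calculation at the saddle-point value $\log yt_0 \asymp \sqrt{\log xy}$ shows that $N(T)h(t_0) \ll E(x,y,z)$, completing the proof. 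The hardest step is this last estimate, where one must carefully track the decay of $e^{-v_t/3}$ against the growth of $x/t$ in the non-monotone regime.
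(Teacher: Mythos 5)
Your setup is correct and matches the paper: start from Lemma~\ref{lem2}, expand each $\Phi(x/n,yn)-1$ via Lemma~\ref{Phi} (noting that the indicator there is active since $n\le\sqrt{x/y}$ forces $x/n\ge yn$), and reduce the lemma to the two bounds
\[
\sum_{z<n\le\sqrt{x/y}}\chi_{y,z}(n)\,\frac{yn}{\log yn}\ll E(x,y,z),\qquad
\sum_{z<n\le\sqrt{x/y}}\chi_{y,z}(n)\,\frac{(x/n)\,e^{-v_n/3}}{(\log yn)^2}\ll E(x,y,z),
\]
which is exactly the paper's plan. Your partial-summation treatment of the first sum is fine. (You should also record the trivial case $x\le yz^2$, where the sum is empty, but that is cosmetic.)

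The gap is in the last step of the exponential bound. After correctly locating the interior maximum $t_0$ of $h$ and splitting the integral there, you replace $N(t)$ by $N(T)$ and claim $N(T)\,h(t_0)\ll E(x,y,z)$. That inequality is false. Write $L=\log xy$ and $\ell_0=\log yt_0\asymp\sqrt{L}$. Then $h(t_0)\asymp \dfrac{xy}{\ell_0^{2}}\,e^{-2\ell_0}$ while $N(T)\ll\dfrac{\sqrt{x/y}\,\log y}{L\log z}$, so
\[
\frac{N(T)\,h(t_0)}{E(x,y,z)}\asymp \sqrt{xy}\;e^{-2\ell_0}=e^{\,L/2-O(\sqrt{L})}\longrightarrow\infty .
\]
The defect is that $h$ is enormous at $t_0$ (the factor $x/t_0$ is still close to $x$ because $t_0\approx e^{\sqrt{L}}/y$ is tiny), and the compensating smallness comes from the fact that $N(t_0)$, not $N(T)$, is what sits against it; $N(t_0)\ll t_0\ll e^{\sqrt{L}}$. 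Bounding $N(t)\le N(T)$ throws away precisely this saving and the estimate no longer closes.

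The fix is to keep Lemma~\ref{UB}'s $t$-dependent bound $N(t)\ll \dfrac{t\log y}{\log ty\,\log z}$ inside the integral rather than pulling out $N(T)$. With $\ell=\log yt$, one has $|h'(t)|\ll \dfrac{h(t)}{t}\max\!\bigl(1,\tfrac{L}{3\ell^{2}}\bigr)$, so
\[
\int_z^T N(t)\,|h'(t)|\,dt\;\ll\;\frac{\log y}{\log z}\,x\!\int \frac{e^{-L/(3\ell)}}{\ell^{3}}\max\!\Bigl(1,\frac{L}{3\ell^{2}}\Bigr)\,d\ell\;\ll\;\frac{x\log y}{L^{2}\log z}=E(x,y,z),
\]
the last step by the substitution $s=L/(3\ell)$. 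This is, in substance, the dyadic-blocks argument the paper uses for the same error term; the point is that the density bound must be applied locally in $t$, not globally.
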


\begin{proof}
We use Lemma \ref{lem2}. The case $x\le yz^2$ is trivial, since the sum in Lemma \ref{lem2} vanishes. If $x>yz^2$, we use Lemma \ref{Phi}
to estimate each occurrence of $\Phi(x/n,ny)-1$. 
The term $\frac{z\,\delta(x-z)}{\log z}$ contributes
$$
\sum_{z<n\le \sqrt{x/y}} \chi_{y,z}(n) \frac{ny}{\log ny} \ll \frac{\sqrt{xy}}{\log \sqrt{xy}} (D(\sqrt{x/y},y,z)-1) \ll E(x,y,z),
$$
by Lemma \ref{UB}.
For the contribution of the error term in Lemma \ref{Phi}, we can split the interval $(z,\sqrt{x/y}]$ by powers of $2$ and use Lemma \ref{UB}. 
This shows that the error term in Lemma \ref{Phi} contributes
\begin{equation*}
\begin{split}
& \ll  \sum_{z<n\le \sqrt{x/y}} \chi_{y,z}(n)  \frac{x}{n (\log ny)^2} \exp\left(-\frac{\log xy}{3\log ny}\right)\\
& \ll \sum_{z<n\le \sqrt{x/y}} \frac{x \log y}{n (\log ny)^3 \log z} \exp\left(-\frac{\log xy}{6\log ny}\right)
\ll E(x,y,z).
\end{split}
\end{equation*}
\end{proof}

\begin{lemma}\label{A2}
For $y\ge z \ge 3/2$, we have
$\displaystyle \Pi(z) = 
\sum_{n\ge 1} \frac{\chi_{y,z}(n)}{n}\Pi(yn).
$
\end{lemma}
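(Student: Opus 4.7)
The plan is to take Lemma \ref{lem0}, divide both sides by $x$, and let $x\to\infty$ with $y,z$ fixed. By Mertens' theorem, the left-hand side $\Phi(x,z)/x$ tends to $\Pi(z)$, and for each fixed $n\in\mathcal{D}_{y,z}$ the $n$-th summand $\chi_{y,z}(n)\Phi(x/n,yn)/x$ tends to $\chi_{y,z}(n)\Pi(yn)/n$. The identity will follow once the interchange of limit and summation on the right is justified.

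I would split the right-hand sum at $n=\sqrt{x/y}$. For $n>\sqrt{x/y}$ one has $yn>x/n$, so the only integer $m\le x/n$ with $P^-(m)>yn$ is $m=1$, giving $\Phi(x/n,yn)=1$. Hence the tail contribution equals $(D(x,y,z)-D(\sqrt{x/y},y,z))/x$, which by Lemma \ref{UB} is $O(\log y/(\log xy\,\log z))$ and vanishes as $x\to\infty$. For $n\le\sqrt{x/y}$ I would invoke the standard Mertens/sieve upper bound
$$\Phi(t,w)\ll t\,\Pi(w)\qquad (t\ge 1,\ w\ge 2)$$
to dominate each summand uniformly in $x$ by a constant multiple of $\chi_{y,z}(n)\Pi(yn)/n$.

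The dominator is summable: partial summation combined with $D(t,y,z)\asymp t/\log(yt)$ from Theorem \ref{T2} and $\Pi(w)\asymp 1/\log w$ shows
$$\sum_{n\ge 1}\frac{\chi_{y,z}(n)\Pi(yn)}{n}\asymp \int_1^\infty\frac{dt}{t\log^2(yt)}<\infty.$$
Dominated convergence then yields
$$\lim_{x\to\infty}\sum_{n\le\sqrt{x/y}}\chi_{y,z}(n)\frac{\Phi(x/n,yn)}{x}=\sum_{n\ge 1}\frac{\chi_{y,z}(n)\Pi(yn)}{n},$$
and assembling the three limits proves the lemma.

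The main obstacle is the exchange of limit and summation; it reduces cleanly to the classical sieve upper bound for $\Phi(t,w)$ together with the sharp size $D(t,y,z)\asymp t/\log(yt)$ already recorded in Theorem \ref{T2}, so no new structural information about $\mathcal{D}_{y,z}$ is required.
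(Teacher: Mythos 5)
Your proof takes a genuinely different route from the paper's. The paper derives Lemma \ref{A2} from Lemma \ref{A1}, which already records the Buchstab-function asymptotics for $\Phi$ via Lemma \ref{Phi}: one bounds the term $\omega(t)-e^{-\gamma}\ll e^{-t}$ as in the proof of Lemma \ref{A1}, then divides by $x$, lets $x\to\infty$, and uses Lemma \ref{UB} to kill $(D(x,y,z)-1)/x$, so that $\Pi(z)-\Pi(y)-\sum_{n>z}\chi_{y,z}(n)\Pi(yn)/n=0$. You instead start from the bare functional equation of Lemma \ref{lem0}, dispose of the tail $n>\sqrt{x/y}$ using $\Phi(x/n,yn)=1$ together with Lemma \ref{UB}, and handle the main range with the crude uniform sieve bound $\Phi(t,w)\ll t\,\Pi(w)$ plus dominated convergence. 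Your route is more elementary and self-contained, needing nothing from the Buchstab machinery; the paper's is a free byproduct of work already done for Lemma \ref{A3}. One small slip: the sieve bound should be stated for $t\ge w\ge 2$, not $t\ge 1$ (it fails when $t<w$, where $\Phi(t,w)=1$ can exceed $t\Pi(w)$); this is harmless here since $n\le\sqrt{x/y}$ forces $x/n\ge yn$.

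There is, however, a genuine circularity as written. To prove the dominator $\sum_n \chi_{y,z}(n)\Pi(yn)/n$ is summable you invoke $D(t,y,z)\asymp t/\log(yt)$ from Theorem \ref{T2}. But Theorem \ref{T2} is established via Corollaries \ref{corlambda2}--\ref{corlambda}, which rest on Theorem \ref{AF}, which rests on Lemma \ref{A4}, then \ref{A3}, then \ref{A2} --- the very statement you are proving. The fix is easy and in fact simpler than what you wrote: convergence of the dominator needs only an upper bound, and Lemma \ref{UB} (proved directly from \cite{AEDD} and logically prior to Lemma \ref{A2}) already gives $D(t,y,z)-1\ll t\log y/(\log ty\,\log z)$. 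Partial summation with this bound and $\Pi(yn)\asymp 1/\log(yn)$ yields $\sum_n\chi_{y,z}(n)\Pi(yn)/n\ll_{y,z}\int_2^\infty dt/(t\log^2 t)<\infty$. With that substitution the argument is sound.
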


\begin{proof}
Fix $y\ge z \ge 3/2$. In Lemma \ref{A1}, use the estimate $\omega(t)-e^{-\gamma} \ll e^{-t}$. 
The contribution from this error term can be estimated as in the proof of Lemma \ref{A1}.
Use Lemma \ref{UB} for $D(x,y,z)-1$, divide the resulting equation by $x$ and let $x\to \infty$.
\end{proof}

\begin{lemma}\label{A3}
For $x \ge 1$, $y\ge z \ge 3/2$, we have
\begin{multline*}
D(x,y,z)-1 = 
\frac{x(\omega(v)-e^{-\gamma})}{\log z}-\frac{x(\omega(u)-e^{-\gamma})}{\log y}\\
-x \sum_{n>z}
\frac{\chi_{y,z}(n)}{n \log yn} 
\left(  \omega\left(\frac{\log x/n}{\log yn}\right)-e^{-\gamma}  \right) \\
+ \frac{y\, \delta(x-y)}{\log y}-\frac{z\, \delta(x-z)}{\log z}
+O\left(E(x,y,z)
+\frac{x e^{-v/3}}{\log^2 z} \right).
\end{multline*}
\end{lemma}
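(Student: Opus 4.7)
The plan is to start from Lemma~\ref{A1}, substitute the asymptotic for $\Phi(x,z)$ and $\Phi(x,y)$ supplied by Lemma~\ref{Phi}, then use Lemma~\ref{A2} to absorb the residual Mertens main terms into the truncated sum, leaving only tail sums controlled via partial summation and Lemma~\ref{UB}.

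Applying Lemma~\ref{Phi} separately to $\Phi(x,z)$ and to $\Phi(x,y)$ inside the identity of Lemma~\ref{A1} produces the two $\omega$-Mertens pieces $\frac{x(\omega(v)-e^{-\gamma})}{\log z}-\frac{x(\omega(u)-e^{-\gamma})}{\log y}$, the boundary terms $\frac{y\,\delta(x-y)}{\log y}-\frac{z\,\delta(x-z)}{\log z}$, the constant pieces $x\Pi(z)-x\Pi(y)$, and errors of size $O\bigl(xe^{-v/3}/\log^2 z + xe^{-u/3}/\log^2 y\bigr)$. Since $u\le v$ and $\log y\ge\log z$, a short computation shows $xe^{-u/3}/\log^2 y \ll E(x,y,z)$, so the second error is absorbed.

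Next, Lemma~\ref{A2}, combined with the observations $\chi_{y,z}(1)=1$ and $\chi_{y,z}(n)=0$ for $1<n\le z$, yields the identity $\Pi(z)-\Pi(y)=\sum_{n>z}\chi_{y,z}(n)\Pi(yn)/n$. Pairing the constant contribution $x(\Pi(z)-\Pi(y))$ with the $-x\sum_{z<n\le\sqrt{x/y}}\chi_{y,z}(n)\Pi(yn)/n$ term of Lemma~\ref{A1} leaves the tail $-x\sum_{n>\sqrt{x/y}}\chi_{y,z}(n)\Pi(yn)/n$. In parallel, extending the $\omega$-sum of Lemma~\ref{A1} from $(z,\sqrt{x/y}]$ to $(z,\infty)$, as the target formula demands, adds $-x\sum_{n>\sqrt{x/y}}\chi_{y,z}(n)(\omega(\log(x/n)/\log yn)-e^{-\gamma})/(n\log yn)$. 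Because $n>\sqrt{x/y}$ forces $\log(x/n)/\log yn<1$, and $\omega$ is taken to be zero on $(0,1)$, this second tail reduces to $e^{-\gamma}x\sum_{n>\sqrt{x/y}}\chi_{y,z}(n)/(n\log yn)$.

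The two tails then collapse into $-x\sum_{n>\sqrt{x/y}}\chi_{y,z}(n)\bigl[\Pi(yn)-e^{-\gamma}/\log yn\bigr]/n$. By Mertens' theorem, $\Pi(t)-e^{-\gamma}/\log t \ll 1/\log^2 t$, and partial summation against $D(t,y,z)-1\ll t\log y/(\log ty\log z)$ from Lemma~\ref{UB} bounds this combined tail by $O(E(x,y,z))$. The main obstacle is precisely this final estimate: bounding each of the two tails separately would yield only $O(x\log y/(\log xy\log z))$, which exceeds $E(x,y,z)$ by a factor of $\log xy$. The cancellation furnished by Mertens' asymptotic, coupled with the vanishing of $\omega$ on $(0,1)$, is what brings the error down to the required $O(E(x,y,z))$.
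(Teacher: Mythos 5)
Your proposal is correct and follows the same approach as the paper's (terse) proof: substitute Lemma~\ref{Phi} into Lemma~\ref{A1}, cancel the Euler-product pieces via Lemma~\ref{A2}, extend the $\omega$-sum using that $\omega$ vanishes on $(0,1)$, and show the combined tail over $n>\sqrt{x/y}$ is $\ll E(x,y,z)$. Your explicit identification of the Mertens cancellation $\Pi(yn)-e^{-\gamma}/\log yn\ll 1/\log^2 yn$ as the step that gains the extra $\log$ over the naive bound is exactly what the paper's phrase ``we use Mertens' theorem to extend the sum'' is gesturing at.
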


\begin{proof}
In Lemma \ref{A1}, we use Mertens' theorem to extend the sum to all $n>z$. This changes the sum by $\ll E(x,y,z)$, since $\omega(t)=0$ when $t<1$.
Approximate $\Phi(x,z)$ and $\Phi(x,y)$  in Lemma \ref{A1} by Lemma \ref{Phi}. By Lemma \ref{A2}, all terms with Euler products cancel. 
The result now follows since $x e^{-u/3}/\log^2 y \ll E(x,y,z)$. 
\end{proof}

\begin{lemma}\label{A4}
For $x \ge 1$, $y\ge z \ge 3/2$, we have
\begin{multline}\label{lemA4eq}
D(x,y,z)-1 =
\frac{x \omega(v)}{\log z}-\frac{x \omega(u)}{\log y}\\
+x\alpha_{y,z}  - x \int_1^x \frac{D(t,y,z)-1}{t^2 \log t y } 
\, \omega\left(\frac{\log x/t}{\log t y}\right) \, \mathrm{d}t\\
+ \frac{y\, \delta(x-y)}{\log y}-\frac{z\, \delta(x-z)}{\log z}
+O\left(E(x,y,z) +\frac{x e^{-v/3}}{\log^2 z}\right),
\end{multline}
where 
$$ \alpha_{y,z} :=\frac{e^{-\gamma}}{\log y}- \frac{e^{-\gamma}}{\log z}+e^{-\gamma} \int_z^\infty \frac{D(t,y,z)-1}{t^2 \log t y }\, \mathrm{d}t .$$
We have
\begin{equation}\label{alphaest}
 \alpha_{y,z} =  \Pi(z) - \frac{e^{-\gamma}}{\log z} 
+ O\left(\frac{1}{\log y \log z}\right).
\end{equation}
\end{lemma}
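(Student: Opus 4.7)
The plan is to convert the sum $S$ from Lemma \ref{A3} into the target integral via Abel summation, and then to derive the asymptotic for $\alpha_{y,z}$ by combining Lemma \ref{A2} with Mertens' theorem. Set $\phi(t) := (\omega(u_t)-e^{-\gamma})/(t\log yt)$ with $u_t = \log(x/t)/\log(yt)$, so that $S = \sum_{n>z} \chi_{y,z}(n)\phi(n) = \int_z^\infty \phi(t)\, dD(t,y,z)$. Since $D(z,y,z)-1=0$ and $\phi(t)(D(t,y,z)-1)\to 0$ as $t\to\infty$, integration by parts yields $S = -\int_z^\infty (D(t,y,z)-1)\, d\phi(t)$ plus a contribution from the jump of $\omega(u_t)$ at $t = \sqrt{x/y}$ (where $u_t$ crosses $1$); Lemma \ref{UB} bounds that jump contribution by $O(E(x,y,z)/x)$.

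Differentiating directly gives
$$
-\phi'(t) = \frac{\omega(u_t) - e^{-\gamma}}{t^2 \log yt} + \frac{\omega(u_t) - e^{-\gamma}}{t^2 \log^2 yt} + \frac{\omega'(u_t)\log xy}{t^2 \log^3 yt},
$$
the first term being exactly what is needed. For the remaining two error terms, the key technical device is to change variables from $t$ to $u = u_t$, under which $\log yt = \log(xy)/(u+1)$. After applying Lemma \ref{UB} to $D(t,y,z)-1$, and using the uniform bound on $|\omega(u)-e^{-\gamma}| + |\omega'(u)|$ for $u\in [1,2]$ together with the exponential decay $|\omega(u)-e^{-\gamma}| + |\omega'(u)| \ll e^{-u}$ for $u\ge 2$, both error integrals evaluate to $O((\log y)/(\log z \log^2 xy))$. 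Multiplying by $x$ gives the required $O(E(x,y,z))$ bound.

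Substituting the resulting integral formula for $S$ back into Lemma \ref{A3}, collecting the $e^{-\gamma}/\log z$ and $e^{-\gamma}/\log y$ terms together with the integral remainder into the definition of $\alpha_{y,z}$, and noting that $\omega(u_t)=0$ for $t > \sqrt{x/y}$ so that the main integral may be written as $\int_1^x$, yields \eqref{lemA4eq}. For the estimate \eqref{alphaest}, let $I := \int_z^\infty (D(t,y,z)-1)/(t^2 \log yt)\,dt$. Writing this as a discrete sum and integrating by parts gives $I = \sum_{n>z}\chi_{y,z}(n) J(n)$ with $J(n) = 1/(n\log yn) + O(1/(n\log^2 yn))$. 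Lemma \ref{A2} combined with Mertens' theorem $\Pi(yn) = e^{-\gamma}/\log yn + O(1/\log^2 yn)$ produces $\sum_{n>z}\chi_{y,z}(n)/(n\log yn) = e^\gamma \Pi(z) - 1/\log y + O(1/(\log y \log z))$, and substituting into the definition of $\alpha_{y,z}$ gives the claimed estimate.

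The main obstacle lies in the error analysis of the second step: the naive pointwise bound $|\omega(u_t)-e^{-\gamma}| \le 1$ yields an error of order $(\log y)/(\log z\log^2 yz)$, which exceeds $E(x,y,z)/x$ when $x \gg yz$. It is the factor $(u+1)$ revealed by the substitution $u=u_t$, paired with the exponential decay of $\omega(u)-e^{-\gamma}$ and $\omega'(u)$ for $u\ge 2$, that supplies the missing $(\log(yz)/\log(xy))^2$ gain and makes the argument work.
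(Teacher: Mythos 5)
Your derivation of \eqref{lemA4eq} follows the paper's route: Abel summation on the sum in Lemma \ref{A3}, with Lemma \ref{UB} controlling the boundary term at infinity, the jump of $\omega(u_t)$ at $t=\sqrt{x/y}$, and the two secondary error integrals arising from $\phi'$. The explicit formula for $-\phi'$ and the substitution $u=u_t$ (so that $\log yt=\log(xy)/(u+1)$ and $dt/t=-\log(xy)\,du/(u+1)^2$), paired with the exponential decay of $\omega(u)-e^{-\gamma}$ and $\omega'(u)$, correctly deliver the $O(E(x,y,z))$ bound; this is precisely the content the paper's one-line remark ``found to be $\ll E(x,y,z)$ with the help of Lemma \ref{UB}'' leaves implicit, and your observation that the pointwise bound $|\omega-e^{-\gamma}|\le 1$ is inadequate when $x\gg yz$ is the right way to see why the decay is essential. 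Where you genuinely diverge is in proving \eqref{alphaest}. The paper's device is to set $x=y$ in the freshly derived \eqref{lemA4eq}: then $u_t<1$ for all $t>1$, so the $\omega$-integral vanishes identically, $D(y,y,z)-1=\Phi(y,z)-1$, and comparison with Lemma \ref{Phi} reads off $\alpha_{y,z}$ at once. You instead evaluate the defining integral from scratch, interchanging sum and integral to get $\sum_{n>z}\chi_{y,z}(n)\int_n^\infty t^{-2}(\log yt)^{-1}\,dt$, expanding the inner integral as $1/(n\log yn)+O(1/(n\log^2 yn))$, and then invoking Lemma \ref{A2} with Mertens' theorem, together with the bound $\sum_{n>z}\chi_{y,z}(n)/(n\log^2 yn)\ll 1/(\log y\log z)$ (routine from Lemma \ref{UB}). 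Both routes give the same $O(1/(\log y\log z))$ error. The paper's specialization is shorter because it recycles \eqref{lemA4eq}; yours is more self-contained and makes the leading term transparent---the $e^{-\gamma}/\log y$ in the definition of $\alpha_{y,z}$ cancels against $-\Pi(y)$, which is the $n=1$ term removed from Lemma \ref{A2}, leaving exactly $\Pi(z)-e^{-\gamma}/\log z$.
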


\begin{proof}
Use partial summation on the sum in Lemma \ref{A3} to turn the sum over $n>z$ into an integral $\int_z^\infty$. 
The two new error terms arising during partial summation are found to be $\ll E(x,y,z)$, with the help of Lemma \ref{UB}. 
After separating the contribution from $e^{-\gamma}$ to the integral, we can change the limits in the remaining integral from $\int_z^\infty$ to 
$\int_1^x$, since $D(t,y,z)-1=0$ when $1\le t \le z$ and $\omega(t)=0$ for $t\le 0$.

To see that \eqref{alphaest} holds, put $x=y$ in \eqref{lemA4eq} and use Lemma \ref{Phi} to estimate $D(x,x,z)-1=\Phi(x,z)-1$. 
The integral in \eqref{lemA4eq} vanishes when $x=y$, since $\omega(t)=0$ for $t<1$. 
\end{proof}

\begin{proof}[Proof of Theorem \ref{AF}]
For $x\ge 1$, $t \ge 1$, define $\lambda\ge 0$ and $ \tau \ge 0$ by
$$ x=y^{e^\lambda -1}, \quad t = y^{e^\tau -1}.$$
Let 
$$ H_{y,z}(\lambda) := \frac{D( y^{e^\lambda -1}, y,z)-1}{y^{e^\lambda -1}} 
=\frac{D(x,y,z)-1}{x} \ll \frac{\log y}{\log xy \log z},$$
by Lemma \ref{UB}. 
Let $G_r(\lambda)$  and $ \Omega_r(\lambda)$ be as in \eqref{Frdef} and \eqref{Omegardef}.
Lemma \ref{A4} shows that for $\lambda\ge 0$ we have
$$
H_{y,z}(\lambda) =\frac{\Omega_r(\lambda)-r\Omega_1(\lambda) }{\log z}
+\alpha_{y,z}
-\int_0^\lambda H_{y,z}(\tau) \Omega_1(\lambda-\tau) \, \mathrm{d} \tau
+R_{y,z}(\lambda),
$$
where
$$
R_{y,z}(\lambda) := 
 \frac{ \delta\bigl(\lambda - \log 2\bigr)}{y^{e^\lambda -2}\log y}
-\frac{\delta\bigl(\lambda - \log(1+r)\bigr)}{y^{e^\lambda -(1+r)}\log z}
+O\left(\frac{e^{-2\lambda}}{\log y \log z}
+\frac{ e^{ -(e^\lambda -1)/(3r)}}{\log^2 z}  \right).
$$
Calculating Laplace transforms, we obtain, for $\mathrm{Re}(s)>0$,
\begin{equation*}
\widehat{H}_{y,z}(s) = \frac{\widehat{\Omega}_r(s)-r \widehat{\Omega}_1(s)}{\log z}
+\frac{\alpha_{y,z}}{s}-\widehat{H}_{y,z}(s) \widehat{\Omega}_1(s) + \widehat{R}_{y ,z}(s).
\end{equation*}
By Lemma \ref{lem4} and \eqref{FLap},
\begin{equation*}
\widehat{H}_{y,z}(s) =
\frac{\widehat{G}_r(s)}{\log z}
+\alpha_{y,z}\widehat{G}(s)
  + \widehat{R}_{y,z}(s) s\widehat{G}(s) .
\end{equation*}
From \eqref{sFseq} and \eqref{Fplameq}, we obtain
\begin{equation*}
\begin{split}
H_{y,z}(\lambda) 
=   \frac{G_r(\lambda)}{\log z}  +\alpha_{y,z} G(\lambda)+ R_{y,z}   +\int_0^\lambda R_{y,z}(\tau) \Bigl(-Ce^{-(\lambda-\tau)}+O\bigl(e^{-3(\lambda-\tau)} \bigr)\Bigr) \,\mathrm{d}\tau.
\end{split} 
\end{equation*}
In the remainder of this proof we may assume $x\ge y \ge z \ge 3/2$.
The last integral equals
\begin{equation*}
\begin{split}
&  e^{-\lambda} \int_0^\infty -C  R_{y,z}(\tau) e^\tau  \,\mathrm{d}\tau
+O\left(\int_\lambda^\infty | R_{y,z}(\tau ) | e^{\tau -\lambda}\,\mathrm{d}\tau
+  \int_0^\lambda |R_{y,z}(\tau)|  \, e^{3(\tau - \lambda)} \,\mathrm{d}\tau \right) \\
& =:  e^{-\lambda} \kappa_{y,z} \  + \ O\left(\frac{e^{-\lambda}}{\log x \log z}\right),
\end{split}
\end{equation*}
where
$$\kappa_{y,z} \ll \frac{1}{\log y \log z} .$$
Hence
\begin{equation*}
\begin{split}
D(x,y,z) = & \frac{ x \, d(u,v)}{\log z}+ x\alpha_{y,z} d(u)
 + \frac{x \, \kappa_{y,z}}{u+1}
+ \frac{y}{\log y}-\frac{z}{\log z}
+ O\left(E(x,y,z)\right).
\end{split}
\end{equation*}
From \eqref{alphaest} we have 
$$ \alpha_{y,z} =  \Pi(z) - \frac{e^{-\gamma}}{\log z} 
+ \rho_{y,z},
\quad  \rho_{y,z}\ll \frac{1}{\log y \log z}, $$
and, by Mertens' theorem,
$$
\varepsilon_z := \alpha_{y,z}-\rho_{y,z} =  \Pi(z)  - \frac{e^{-\gamma}}{\log z}  \ll \frac{1}{\log^2 z}.
$$ 
With the estimate \eqref{IDD3T1}, we can write
$$
x \alpha_{y,z} d(u) =x(\varepsilon_z + \rho_{y,z}) d(u) =x\varepsilon_z d(u)  + \frac{x C \rho_{y,z}}{u+1} +O(E(x,y,z)).
$$
Thus
\begin{equation*}
\begin{split}
D(x,y,z) = & \frac{ x \, d(u,v)}{\log z}+ x\varepsilon_z d(u)
 + \frac{x \, \mu_{y,z}}{u+1}
+ \frac{y}{\log y}-\frac{z}{\log z}
+ O\left(E(x,y,z)\right),
\end{split}
\end{equation*}
where
$$
\mu_{y,z}:=\kappa_{y,z}+C \rho_{y,z} \ll \frac{1}{\log y \log z}.
$$
If $v\ll 1$, that is $\log z \gg \log x$, then the result follows from Mertens' theorem. 
Assume $v$ is sufficiently large. Then the first estimate in Theorem \ref{duv} shows that, with $r=u/v$,
\begin{equation*}
\begin{split}
x\varepsilon_z d(u) &= x\varepsilon_z d(u,v) e^\gamma \left(1+\frac{r}{1-r}\right) \left(1+O\left(\frac{1}{uv}\right)\right) \\
& = x\varepsilon_z d(u,v) e^\gamma +\frac{r  x\varepsilon_z d(u,v) e^\gamma }{1-r} +O\left(\frac{x\varepsilon_z d(u,v) }{(1-r)uv}\right).
\end{split}
\end{equation*}
Since $d(u,v)\ll \frac{1-r}{u+1}$, the last error term is $\ll E(x,y,z)$. 
The second estimate in Theorem \ref{duv} yields
$$
\frac{r  x\varepsilon_z d(u,v) e^\gamma }{1-r} =x \frac{\varepsilon_z C r}{u+1} +O(E(x,y,z)).
$$ 
The estimate \eqref{thmAFeq1} now follows with 
$$
\beta_{y,z} := (\log y)(\mu_{y,z} + \varepsilon_z C r) \ll \frac{1}{ \log z}.
$$
It remains to prove the stronger upper bound \eqref{thmAFeq2}. For fixed $y\ge z \ge 3/2$, \eqref{thmAFeq1} 
and Theorem \ref{duv} yield
\begin{equation}\label{Dsmally}
D(x,y,z) = \frac{c_{y,z} x}{\log xy} + O_{y,z}\left(\frac{x}{\log^2 x}\right),
\end{equation}
where
\begin{equation}\label{cyzeq}
c_{y,z} = C \Pi(z) \log(y/z)+  \beta_{y,z}.
\end{equation}
With \eqref{Dsmally} and the method in \cite{CFAE}, 
just like in the derivation of the formula for $c_y=c_{y,1}$ in \cite{CFAE}, we derive \eqref{tdensecon}:
As a first step, \cite[Lemma 1]{CFAE} is replaced by the more general identity
$$
1 = \sum_{n\ge 1} \frac{\chi_{y,z}(n)}{n^s} \prod_{z<p\le ny} \left(1-\frac{1}{p^s}\right) \qquad (\mathrm{Re}(s)>1),
$$
the proof of which is analogous to that of \cite[Lemma 1]{CFAE}. For the rest of the proof of \eqref{tdensecon}, 
use this modified version of Lemma 1 and follow the
proofs of Lemmas 2 through 4 of \cite{CFAE}.

With \eqref{etadef} and Lemma \ref{A2}, we obtain \eqref{cyz1} and \eqref{cyz1b}.
Comparing \eqref{cyz1b} with \eqref{cyzeq} establishes \eqref{thmAFeq2}.
\end{proof}

\section{Proof of Theorem \ref{T2}}\label{secproofT2}

For $y\ge z \ge 1$, let $\Lambda(y,z)$ be the natural density of integers whose smallest prime factor is in the interval $(z,y]$, that is
$$\Lambda(y,z) :=\Pi(z)-\Pi(y)
= \sum_{z<p\le y} \frac{\Pi(p-1) }{p} \asymp \sum_{z<p\le y} \frac{1}{p\log p}.
$$

We first establish the following corollary to Theorem \ref{AF}.
\begin{corollary}\label{corlambda2}
For $x\ge y \ge z \ge 3/2$ we have
\begin{equation}\label{corlam2eq}
D(x,y,z) = 1+\pi(y,z)+\frac{C x  \Lambda(y,z)\bigl(\log y +\xi_{y,z}\bigr)}{\log xy}
\left(1+O\left(\frac{1}{\log x} + \frac{\log^2 y}{\log^2 x}  \right)\right),
\end{equation}
where
$$ |\xi_{y,z}| \le \mathcal{E}(z)+\mathcal{E}(y), \quad \log y +\xi_{y,z} \asymp \log y.$$
\end{corollary}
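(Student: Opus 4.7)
The plan is to derive Corollary \ref{corlambda2} from Theorem \ref{AF} by simplifying the leading term with the help of Theorem \ref{duv}, then converting the resulting coefficient to the desired form using Mertens' theorem (with PNT error), and finally rewriting the explicit $y/\log y - z/\log z$ contribution as $1 + \pi(y,z)$ via the prime number theorem.

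First, apply Theorem \ref{duv}. Since $u = \log x/\log y$ and $v = \log x/\log z$, one checks that $1 - u/v = \log(y/z)/\log y$ and $u+1 = \log(xy)/\log y$, so $(1-u/v)/(u+1) = \log(y/z)/\log(xy)$. Theorem \ref{duv} then yields
$$
x\,d(u,v)\,e^{\gamma}\,\Pi(z) = \frac{Cx\,\Pi(z)\log(y/z)}{\log xy}\bigl(1 + O(\log^2 y/\log^2 x)\bigr).
$$
Combining with the middle term $x\beta_{y,z}/\log xy$ of Theorem \ref{AF}, and using the bound $|\beta_{y,z}| \le C\Pi(z)(\mathcal{E}(z)+\mathcal{E}(y))$ from \eqref{thmAFeq2}, the main contribution takes the form $\frac{Cx\,\Pi(z)(\log(y/z)+\psi_{y,z})}{\log xy}\bigl(1+O(\log^2 y/\log^2 x)\bigr)$ with $|\psi_{y,z}| \le \mathcal{E}(z)+\mathcal{E}(y)$.

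Next, convert from $\Pi(z)\log(y/z)$ to $\Lambda(y,z)\log y$ using the identity
$$\Pi(z)\log(y/z) - \Lambda(y,z)\log y = \Pi(y)\log y - \Pi(z)\log z,$$
which is immediate from $\Lambda(y,z) = \Pi(z)-\Pi(y)$. By Mertens' theorem with the PNT error, $\Pi(t)\log t = e^{-\gamma} + O(\mathcal{E}(t))$ for $t \ge 3/2$, so the right-hand side is $O(\mathcal{E}(z)+\mathcal{E}(y))$. This rewrites the main contribution as $\frac{Cx\,\Lambda(y,z)(\log y + \xi_{y,z})}{\log xy}\bigl(1+O(\log^2 y/\log^2 x)\bigr)$, with $|\xi_{y,z}| \le \mathcal{E}(z)+\mathcal{E}(y)$, absorbing $\psi_{y,z}$ and the Mertens-generated correction into $\xi_{y,z}$. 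The subsidiary claim $\log y + \xi_{y,z} \asymp \log y$ then follows from the boundedness of $\mathcal{E}$ relative to $\log y \ge \log(3/2)$ in the relevant range. Separately, PNT in the form $\pi(t) = t/\log t + O(t/\log^2 t)$ converts $y/\log y - z/\log z$ into $\pi(y,z) + O(y/\log^2 y + z/\log^2 z)$, and the trivial "$+1$" coming from $n=1$ (present in $D(x,y,z)$ by the structure of Lemmas \ref{lem2} and \ref{Phi}) accounts for the remaining additive constant.

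The main obstacle is showing that every error collected along the way — the $O(\log^2 y/\log^2 x)$ from Theorem \ref{duv}, the Mertens and PNT errors, and the original $O(x\log y/(\log^2 x \log z))$ term of Theorem \ref{AF} — sits inside the multiplicative envelope $(1 + O(1/\log x + \log^2 y/\log^2 x))$ of the main term $\frac{Cx\,\Lambda(y,z)\log y}{\log xy}$. The sensitive case is when $\Lambda(y,z)$ is small (e.g., $y$ close to $z$), where the main term itself is small: here one must exploit the fact that as $\Lambda(y,z) \to 0$, both $\pi(y,z)$ and $y/\log y - z/\log z$ tend to zero in concert, so the cancellation occurs at the correct order of magnitude and the remaining absolute errors remain proportional to the shrinking main term. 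Verifying this uniformly across the range $x \ge y \ge z \ge 3/2$ will be the crux of the proof.
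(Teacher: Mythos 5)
Your plan of applying Theorem \ref{AF} directly to $D(x,y,z)$ and then massaging the result cannot close, because the additive error $O\bigl(x\log y/(\log^2 x\log z)\bigr)$ in Theorem \ref{AF} is \emph{not} proportional to $\Lambda(y,z)$. When $(z,y]$ contains a single prime $p$ with $p$ large, the main term $\frac{Cx\Lambda(y,z)\log y}{\log xy}$ is of size roughly $\frac{x}{p\log x}$, while the error from Theorem \ref{AF} remains of size $\frac{x}{\log^2 x}$. Their ratio is about $p/\log x$, which is not $\ll 1/\log x + \log^2 y/\log^2 x$; so the error cannot be absorbed into the multiplicative envelope you claim. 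You flag this case at the end as ``the crux,'' but the appeal to ``cancellation in concert'' between $\pi(y,z)$ and $y/\log y - z/\log z$ does not address it: that cancellation concerns the \emph{secondary} additive terms, while the unabsorbable quantity is the error term of Theorem \ref{AF} itself.

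The paper avoids this by inserting the Buchstab identity $D(x,y,z)=1+\sum_{z<p\le y}D(x/p,py,p-0)$ \emph{first} (at least for $y\le x^{1/4}$), and only then applying Theorem \ref{AF} to each summand. The $1/p$ in each term of the prime sum causes the accumulated error to carry the factor $\sum_{z<p\le y}\frac{1}{p\log p}\asymp\Lambda(y,z)$, which is exactly what lets it sit inside the multiplicative envelope of the main term. This also explains where $\pi(y,z)$ actually comes from: not from a PNT conversion of $y/\log y - z/\log z$ (the paper shows that those terms are $\ll x^{3/4}\Lambda(y,z)$ and negligible), but from the additive $1$'s that the Buchstab sum produces, one for each prime in $(z,y]$. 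Your PNT conversion of $y/\log y-z/\log z$ would moreover introduce an error $O(y/\log^2 y)$ which again is not controlled by $\Lambda(y,z)$. Finally, for $y>x^{1/4}$ the paper switches to a different, trivial bound $D-1\le\Phi(x,z)-1\ll x/\log z$ combined with Buchstab; your argument has no separate treatment of this regime. In short, the Buchstab decomposition is not an optional simplification here but the mechanism by which the $\Lambda(y,z)$ factor enters the error, and its omission is a genuine gap.
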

\begin{proof}
If the interval $(z,y]$ does not contain a prime, then $D(x,y,z)=1$ and $\pi(y,z)=\Lambda(y,z)=0$, so \eqref{corlam2eq} holds.
Assume $\pi(y,z)\ge 1$, so that $\Lambda(y,z)>0$. 

We first consider the case $y\le x^{1/4}$. 
As in \eqref{Dxyzp}, we write
$$D(x,y,z)=1+\sum_{z<p\le y} D(x/p,py,p-0)$$
 and apply Theorem \ref{AF} to estimate each occurrence of $D(x/p,py,p-0)$. 
Note that $x/p \ge py$, since $p\le y$ and $y\le x^{1/4}$. The contribution from $y/\log y - z/\log z$ in Theorem \ref{AF} is 
$$
< \sum_{z<p\le y} \frac{py}{\log py} \le \sum_{z<p\le y} \frac{p^2y}{p \log p} \le   \sum_{z<p\le y} \frac{y^3}{p \log p} \ll x^{3/4}\Lambda(y,z).
$$
The contribution from the term $x \beta_{y,z}/\log xy$ in Theorem \ref{AF} is
$$
= \frac{x}{\log xy}  \sum_{z<p\le y} \frac{\beta_{py,p-0}}{p} =\frac{x}{\log xy}  \cdot C \Lambda(y,z) \xi_{y,z},
$$
where we define
$$
\xi_{y,z} :=\frac{1}{C \Lambda(y,z)}  \sum_{z<p\le y} \frac{\beta_{py,p-0}}{p}.
$$
The estimate \eqref{thmAFeq2} yields 
$|\xi_{y,z}| \le \mathcal{E}(z)+\mathcal{E}(y)$. With the second estimate for $d(u,v)$ in Theorem \ref{duv}, 
the contribution from the term $x d(u,v) e^\gamma \Pi(z)$ in Theorem \ref{AF} is
$$
\frac{Cx  \Lambda(y,z) \log y}{\log xy}
\left(1+O\left(\frac{\log^2 y}{\log^2 x}\right)\right).
$$
The contribution from the error term in Theorem \ref{AF} is 
$$\ll \frac{x \Lambda(y,z)\log y }{ \log^2 x}. $$ 
We have
$$
\frac{x \log y \Lambda(y,z)}{(\log x)^2} \gg \frac{x \log y}{\log^2 x}\cdot \frac{\pi(y,z)}{y \log y} \gg \pi(y,z) \gg 1+\pi(y,z).
$$
The result now follows if $y>y_0$, since $\log y+\xi_{y,z} = \log y +O(1) \asymp \log y$.  
If $y\le y_0$, we also have $\log y+\xi_{y,z} \asymp \log y$, since the preceding shows that
$$
D(x,y,z) = O\left(\frac{x}{\log^2 x}\right) + \frac{C x  \Lambda(y,z)\bigl(\log y+\xi_{y,z}\bigr)}{\log xy},
$$
and $D(x,y,z) \asymp_{y,z} \frac{x}{\log x}$ when $\frac{3}{2}\le z\le y \le y_0$ and $\pi(y,z)\ge 1$, by \cite[Lemma 3]{PPN}.
This concludes the proof of the case $y\le x^{1/4}$.

If $y>x^{1/4}$, we use the estimate
$$
D(x,y,z)-1 \le \Phi(x,z)-1 \ll \frac{x}{\log z} \qquad (x\ge 1, y\ge 1, z>1),
$$
which imples
$$
D(x,y,z)-1-\pi(y,z)=\sum_{z<p\le y} (D(x/p,py,p-0)-1) \ll \sum_{z<p\le y} \frac{x}{p\log p} \asymp x \Lambda(y,z).
$$
The result now follows since $\log x \asymp \log y$ in this case. 
\end{proof}

When  $y \ll x/\log x$, the term $\pi(y,z)$ is absorbed by the error terms. 

\begin{corollary}\label{corlambda}
For $x\ge y \ge z \ge  3/2$ and $y \ll \frac{x}{\log x}$ we have
\begin{equation*}
D(x,y,z) =1+\frac{C x \Lambda(y,z)\bigl(\log y +\xi_{y,z}\bigr)}{\log xy}
\left(1+O\left(\frac{1}{\log x}+\frac{\log^2 y}{\log^2 x}  \right)\right),
\end{equation*}
where
$$ |\xi_{y,z}| \le \mathcal{E}(z)+\mathcal{E}(y), \quad \log y+\xi_{y,z} \asymp \log y.$$
\end{corollary}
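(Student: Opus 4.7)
The plan is to derive Corollary \ref{corlambda} directly from Corollary \ref{corlambda2} by absorbing the summand $\pi(y,z)$ into the error term, using the hypothesis $y\ll x/\log x$. The bounds $|\xi_{y,z}|\le \mathcal{E}(z)+\mathcal{E}(y)$ and $\log y+\xi_{y,z}\asymp \log y$ pass through unchanged from Corollary \ref{corlambda2}, so only the absorption step needs argument.

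First I would bound $\pi(y,z)$ in terms of $\Lambda(y,z)$. Since every prime $p\in(z,y]$ satisfies $\Pi(p-1)/p \ge \Pi(y-1)/y \gg 1/(y\log y)$, one has
$$
\Lambda(y,z)=\sum_{z<p\le y}\frac{\Pi(p-1)}{p} \gg \frac{\pi(y,z)}{y\log y},
$$
so $\pi(y,z) \ll y\log y\cdot \Lambda(y,z)$. Denoting the main term of Corollary \ref{corlambda2} by $M := Cx\Lambda(y,z)(\log y+\xi_{y,z})/\log xy$, and using $\log y+\xi_{y,z}\asymp \log y$ together with $\log xy\asymp \log x$ (a consequence of $y\ll x/\log x$), this rearranges to $\pi(y,z) \ll (y\log x/x)\,M$. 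It therefore suffices to show that $y\log x/x$ is dominated by the error factor $\varepsilon := 1/\log x + \log^2 y/\log^2 x$.

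To finish, I would split on the size of $y$. If $y\le x/\log^2 x$, then $y\log x/x\le 1/\log x$, absorbed by the first piece of $\varepsilon$. Otherwise $x/\log^2 x<y\ll x/\log x$ forces $\log y = \log x - O(\log\log x)$, hence $\log^2 y\asymp \log^2 x$; combined with $y\log x\ll x$, this yields $y\log^3 x\ll x\log^2 x\asymp x\log^2 y$, equivalently $y\log x/x\ll \log^2 y/\log^2 x$, absorbed by the second piece of $\varepsilon$. The main (modest) obstacle is the borderline regime $y\asymp x/\log x$, where both pieces of $\varepsilon$ contribute simultaneously and the constants in $\log^2 y\asymp \log^2 x$ must be tracked carefully; everywhere else, one of the two pieces dominates trivially.
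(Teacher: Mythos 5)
Your argument is correct and follows essentially the same approach as the paper: both derive the corollary from Corollary \ref{corlambda2} by absorbing $\pi(y,z)$ into the error term via the same key bound $\pi(y,z)\ll y\log y\cdot\Lambda(y,z)$, differing only in the split point ($x/\log^2 x$ versus the paper's $x^{1/4}$) and in that the paper cites the internals of Corollary \ref{corlambda2}'s proof for the small-$y$ case while you re-derive that case directly. The borderline regime $y\asymp x/\log x$ that you flag as a potential obstacle is in fact unproblematic, since $y>x/\log^2 x$ forces $\log y\ge\tfrac12\log x$ for $x$ large, so $\log^2 y/\log^2 x\ge\tfrac14$ with no delicate constant-tracking needed.
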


\begin{proof}
When $y\le x^{1/4}$, the proof of Corollary \ref{corlambda2} shows that $\pi(y,z)$ is absorbed by the first error term.
If $x^{1/4} < y \ll \frac{x}{\log x}$, we have
$$
\pi(y,z) \ll x \frac{\pi(y,z)}{y\log y} \le x \sum_{z<p\le y} \frac{1}{p\log p} \ll x \Lambda(y,z)
$$
and the result follows from Corollary \ref{corlambda2}.
\end{proof}

\begin{proof}[Proof of Theorem \ref{T2}]
For $y\ge z \ge 3/2$ fixed and $x \to \infty$, comparing \eqref{Dsmally} and Corollary \ref{corlambda} shows that
$$
c_{y,z}= C \Lambda(y,z)(\log y +\xi_{y,z}). 
$$
The first estimate in Theorem \ref{T2} is Corollary \ref{corlambda} and the second one is Corollary \ref{corlambda2}.
The formula for $c_{y,z}$ in \eqref{tdensecon} and the estimates \eqref{cyz1} and \eqref{cyz1b} were established in the proof of Theorem \ref{AF}, while 
\eqref{cyz2} and \eqref{cyz3} follow from the estimates for $\xi_{y,z}$ in Corollary \ref{corlambda}.
In Section \ref{secacomp} we describe the calculation of the numerical values of $c_{y,z}$ in Table \ref{tab1}.
\end{proof}

\section{Proof of Theorems \ref{thmAD} and \ref{thmA}}\label{secA}

Recall that $\chi_{y,z}(n)$ denotes the characteristic function of the set $\mathcal{D}_{y,z}$ in \eqref{Dsetdef}. 

\begin{lemma}\label{lem1}
Let $f:[1,\infty)\to \mathbb{C}$ be integrable. For $x\ge 1$, $y\ge 1$, $z\ge 1$, we have
\begin{equation}\label{lem1eq1}
 \int_1^{x} \sum_{n\le x/t} \chi_{yt,z}(n) f(nyt) \frac{dt}{t} = \int_1^{x} A(t,y,z) f(yt) \frac{dt}{t}.
\end{equation}
If $f(t)\ll 1/(t\log 2t)$, then 
\begin{equation}\label{lem1eq2}
\int_1^\infty \sum_{n\ge 1} \chi_{yt,z}(n) f(nyt) \frac{dt}{t} = \int_1^\infty A(t,y,z) f(yt) \frac{dt}{t}.
\end{equation}
\end{lemma}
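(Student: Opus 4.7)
The plan is to evaluate both sides via the substitution $s = nt$ inside each term. The key algebraic observation is that setting $t = s/n$ transforms the condition $\chi_{yt,z}(n) = 1$, i.e., $F(n)/n \le yt$ and $P^-(n)>z$, into $F(n) \le ys$ and $P^-(n)>z$---precisely the condition for $n$ to contribute to $A(s,y,z)$ (the constraint $n \le s$ will be built into the new limits of integration). Moreover $dt/t = ds/s$ and $f(nyt) = f(ys)$, so the measure and integrand reduce cleanly.

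For \eqref{lem1eq1}, first I would swap the sum and integral---legitimate since for each fixed $t$ only finitely many $n \le x/t$ contribute---writing the LHS as
$$
\sum_{n\le x} \int_1^{x/n} \chi_{yt,z}(n)\, f(nyt)\, \frac{dt}{t}.
$$
The substitution $s = nt$ maps $t \in [1, x/n]$ to $s \in [n, x]$ and yields
$$
\sum_{n\le x} \int_n^x \mathbf{1}[F(n) \le ys,\ P^-(n) > z]\, f(ys)\, \frac{ds}{s}.
$$
Swapping sum and integral once more, the inner count over $n \le s$ is precisely $A(s,y,z)$, producing the RHS of \eqref{lem1eq1}.

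For \eqref{lem1eq2}, the same substitution applies once Fubini's theorem is justified. Using the inclusion $A(s,y,z) \le A(ys)$ together with Saias's bound $A(t) \ll t/\log(2t)$, one obtains $A(s,y,z) \ll ys/\log(2ys)$; combined with the hypothesis $|f(ys)| \ll 1/(ys\log(2ys))$, the iterated absolute integral is dominated by $\int_1^\infty ds/(s\log^2(2ys))$, which converges. This legitimizes both interchanges of sum and integral, and the algebraic identity of the substitution then delivers \eqref{lem1eq2}.

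The main point---and the only subtlety---is the absolute convergence check in \eqref{lem1eq2}; once that is in hand, the lemma reduces to the substitution identity, which is immediate from the definitions of $\chi_{y,z}$ and $A(x,y,z)$.
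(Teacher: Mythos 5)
Your proof is correct and uses essentially the same argument as the paper: interchange the sum and integral, substitute $s=nt$ to turn $\chi_{yt,z}(n)$ into the condition $F(n)\le ys$, $P^-(n)>z$, then interchange again to recognize $A(s,y,z)$. The one minor divergence is in justifying \eqref{lem1eq2}: you invoke Tonelli directly on the doubly-infinite sum/integral using $A(s,y,z)\le A(ys)\ll ys/\log 2ys$, whereas the paper instead takes $x\to\infty$ in \eqref{lem1eq1} and controls the tail $\sum_{n>x/t}\chi_{yt,z}(n)f(nyt)$ via Lemma \ref{UB} and partial summation. Both routes are valid and rest on the same $A(t)\ll t/\log 2t$ input; your absolute-convergence argument is arguably a bit cleaner.
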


\begin{proof}
We have
\begin{equation*}
\begin{split}
 \int_1^{x} \sum_{n\le x/t} \chi_{yt,z}(n) f(nyt) \frac{dt}{t} 
& = \sum_{n \le x \atop {F(n)\le xy \atop P^-(n)>z}} \int_{\max(1,F(n)/ny)}^{x/n}  f(nyt) \frac{dt}{t}\\
& = \sum_{n \le x \atop { F(n)\le xy  \atop P^-(n)>z}}\int_{\max(n,F(n)/y)}^{x}  f(yt) \frac{dt}{t}\\
& =  \int_{1}^{x} \sum_{n \le t \atop {F(n)\le yt  \atop P^-(n)>z}} f(yt) \frac{dt}{t}
=  \int_{1}^{x} A(t,y,z) f(yt) \frac{dt}{t},
\end{split}
\end{equation*}
which establishes \eqref{lem1eq1}. 
If $f(t)\ll 1/(t\log 2t)$, then  $\lim_{x\to \infty} \int_1^{x} A(t,y,z) f(yt) \frac{dt}{t}$ exists, since $A(t,y,z)\le A(yt) \ll yt /\log 2 yt $.
Lemma \ref{UB} and partial summation yield 
$$  \sum_{n> x/t} \chi_{yt,z}(n) f(nyt) \ll \frac{\log yt}{yt \log xy},$$ 
so that 
$$  \int_1^{x} \sum_{n\le x/t} \chi_{yt,z}(n) f(nyt) \frac{dt}{t} =  \int_1^{x} \sum_{n\ge 1} \chi_{yt,z}(n) f(nyt) \frac{dt}{t} + O\left(\frac{1}{\log xy}\right).$$
Equation \eqref{lem1eq2} now follows from \eqref{lem1eq1} by taking the limit as $x\to \infty$. 
\end{proof}

\begin{corollary}\label{Acor1}
For $y\ge z$, we have
$$
\int_1^\infty A(t,y,z)\Pi(yt) \frac{dt}{t^2}
=y\int_y^\infty A(t/y,y,z)\Pi(t) \frac{dt}{t^2} = \Pi(z).
$$
\end{corollary}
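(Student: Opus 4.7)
The first equality is merely a change of variables: setting $u = yt$ in the left-hand integral sends $dt/t^2$ to $y\,du/u^2$ and the argument of $A$ to $u/y$, while changing the range of integration to $[y,\infty)$, which after relabeling gives the middle expression. For the main equality, the strategy is to invoke equation \eqref{lem1eq2} of Lemma \ref{lem1} with the test function $f(s) = y\Pi(s)/s$. Mertens' theorem gives $\Pi(s) \ll 1/\log 2s$, so $f$ satisfies the required growth bound $f(s) \ll 1/(s\log 2s)$.

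With this choice one has $f(yt)\,dt/t = \Pi(yt)\,dt/t^2$, so the right-hand side of \eqref{lem1eq2} is exactly $\int_1^\infty A(t,y,z)\Pi(yt)\,dt/t^2$, which is the integral we want to evaluate. On the left-hand side, the inner sum at fixed $t\ge 1$ becomes
\begin{equation*}
\sum_{n\ge 1}\chi_{yt,z}(n)\frac{y\Pi(nyt)}{nyt} = \frac{1}{t}\sum_{n\ge 1}\frac{\chi_{yt,z}(n)\Pi(nyt)}{n}.
\end{equation*}
Since $y \ge z$ and $t\ge 1$, we have $yt \ge z$, so Lemma \ref{A2} applied with $y$ replaced by $yt$ shows the sum on the right equals $\Pi(z)$, a constant independent of $t$. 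Pulling this constant out of the outer integral and evaluating $\int_1^\infty dt/t^2 = 1$ yields the desired value $\Pi(z)$.

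The only mildly delicate point is that Lemma \ref{A2} is formally stated under $y\ge z\ge 3/2$, whereas the corollary permits any $y\ge z$. The proof of Lemma \ref{A2} extends essentially verbatim to the boundary $z=1$ (where $\Pi(1)=1$ and $\chi_{yt,1}(n)$ is just the indicator of the condition $F(n)/n \le yt$), so I do not anticipate any serious obstacle. The main idea, and the only conceptual ingredient, is recognizing that $f(s) = y\Pi(s)/s$ is the right test function to feed into Lemma \ref{lem1} in order to transfer the arithmetic identity of Lemma \ref{A2} from a fixed truncation $y$ to the moving truncation $yt$ under the integral.
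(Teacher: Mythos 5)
Your proof is correct and follows essentially the same route as the paper: the paper also applies equation \eqref{lem1eq2} of Lemma \ref{lem1} with $f(t)=\Pi(t)/t$ (your extra factor of $y$ is a harmless constant scaling) and then invokes Lemma \ref{A2} with $y$ replaced by $yt$ to evaluate the inner sum as $\Pi(z)$. The boundary extension of Lemma \ref{A2} to $z<3/2$ that you flag is also left implicit in the paper.
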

\begin{proof}
The first and second expression are equal by a change of variables. That the first expression equals $\Pi(z)$
follows from  \eqref{lem1eq2} with $f(t)= \frac{1}{t}\Pi(t)$ and Lemma \ref{A2} with $y$ replaced by $yt$, i.e.
$
\sum_{n\ge 1} \frac{\chi_{yt,z}(n)}{n}\Pi(nyt)=\Pi(z).
$
\end{proof}

\begin{proof}[Proof of Theorems \ref{thmAD} and \ref{thmA}]
Assume $x\ge y \ge z \ge 3/2$. 
We write $n=mpr$, where $F(n)=p^2 r$, $F(m)\le p$, $F(r)\le p^2 r$ and $P^-(r)\ge p$. 
Sorting the integers $n$ in $\mathcal{A}(x,y,z) \setminus \mathcal{D}(x,y,z)$ according to $p$, we have
$$
(A-D)(x,y,z) = \sum_{z<p\le \sqrt{xy}} \sum_{{m<p/y \atop F(m)\le p} \atop P^-(m)>z} \sum_{{r\le xy/p^2 \atop F(r)\le p^2 r} \atop P^-(r)\ge p} 1
= \sum_{y<p\le \sqrt{xy}} A'(p/y,y,z) D(xy/p^2,p^2,p-1),
$$
where, for $x> 1$, $y\ge z \ge 3/2$,
\begin{equation}\label{firstUB}
A'(x,y,z):= |\{n<x: F(n)\le xy, \ P^-(n)>z\}|= 1+O\left(\frac{x\log y}{ \log xy \log z}\right),
\end{equation}
by Lemma 5 of \cite{AEDD}, if $x\ge z$. When $z>x$, then $A'(x,y,z)=1$.   
Let $M:=\max(y,(xy)^{1/5})$. With $E(x,y,z)$ as in \eqref{Edef}, the contribution from primes $p > M$ to $A(x,y,z)-D(x,y,z)$ is,
by Lemma \ref{UB},
$$
 \sum_{M<p\le \sqrt{xy}} \left(1 + O  \left(\frac{p \log y}{y \log p \log z}\right) \right) \left(  1 + O\left(\frac{x y}{p^2 \log xy}\right)\right) 
= \pi(\sqrt{xy},M)+O(E(x,y,z)).
$$
The contribution from the term $1$ in the estimate \eqref{Dasymp} and from primes $p\le M$ is 
$$
 \sum_{y<p\le M} A'(p/y,y,z) =\sum_{y<p\le M}\left(1+ O\left(\frac{p \log y}{y \log p \log z}\right)\right)  
= \pi(M,y)+ O(E(x,y,z)).
$$
With \eqref{Dasymp} we obtain
$$
A(x,y,z)-D(x,y,z) =\pi(\sqrt{xy},y)+ \sum_{y<p\le M} A'(p/y,y,z) \frac{c_{p^2,p-1} xy}{p^2 \log xy} + O(E(x,y,z)).
$$
Since $c_{p^2,p-1} \asymp 1$ by \eqref{cyz3}, extending the last sum to all $p>y$ adds $\ll E(x,y,z)$. 
Thus
$$
A(x,y,z)-D(x,y,z) =\pi(\sqrt{xy},y)+  \frac{\alpha_{y,z} x}{\log xy}+O(E(x,y,z)),
$$ 
where
$$
\alpha_{y,z} := \sum_{p>y} A'(p/y,y,z) \frac{c_{p^2,p-1} y}{p^2 } \ll \frac{1}{\log z} .
$$
The estimate \eqref{thmADeq} now follows from Theorem \ref{AF} with $\tau_{y,z}=\beta_{y,z}+ \alpha_{y,z}$. 
We will establish \eqref{taueq2} below. 

Define
$a_{y,z}:= \alpha_{y,z} + c_{y,z}$. 
Together with \eqref{Dasymp2} we conclude that, for $x\ge y \ge z \ge 3/2$, 
\begin{equation}\label{Aabserror}
A(x,y,z) =  1+ \pi(\sqrt{xy},z)+\frac{a_{y,z} x}{\log xy}+ O\left(E(x,y,z)+\frac{x\log (1+y/z) \log^2 y}{\log^3 xy \log z}\right).
\end{equation}
We have $a_{y,z}\asymp \frac{\log(1+y/z)}{\log z}$, which is \eqref{ateq4}, to be established below as a consequence of \eqref{Aabserror}.
This implies that, for $x\ge 1$, $y\ge z\ge 3/2$, 
\begin{equation}\label{secondUB}
A(x,y,z) =1+ \min(\pi(x,z), \pi(\sqrt{xy},z))+ O\left(\frac{x\log (1+y/z)}{\log xy \log z}\right).
\end{equation}
Note that \eqref{Aasymp1} is implied by \eqref{secondUB} if $y>x^{1/10}$, say. Thus we may assume $y\le x^{1/10}$.
Repeating the above argument with \eqref{secondUB} in place of \eqref{firstUB}, and assuming $y\le x^{1/10}$,  we find that
$$
A(x,y,z) =  1+ \pi(\sqrt{xy},z)+ \frac{a_{y,z} x}{\log xy}+ O\left(\frac{x\log(1+ y/z)}{\log^2 xy \log z}+\frac{x\log (1+y/z) \log^2 y}{\log^3 xy \log z}\right),
$$
which yields \eqref{Aasymp1}, assuming \eqref{ateq4}, which will be established below from \eqref{Aabserror}.  

When $1\le y \le z\le x$, the result follows since in that case $A(x,y,z)= A(xy/z,z,z)$ and $a_{y,z}=a_{z,z} y/z$. 

The estimate \eqref{Aabserror} clearly implies that for $y\ge z$ fixed and $x\to \infty$,
$$ \int_1^x A(t,y,z)\frac{dt}{t} \sim \frac{a_{y,z} x}{\log x} \quad (x \to \infty).$$
On the other hand, Lemma \ref{lem1} with $f(t)=1$ yields
\begin{equation*}
 \int_1^x A(t,y,z)\frac{dt}{t} 
=\int_{1}^{x}D(x/t,yt,z) \frac{dt}{t}.
\end{equation*}
With the estimate \eqref{Dasymp}, we find that the last integral is
$$
\frac{x}{\log x} \int_1^\infty c_{yt,z} \frac{dt}{t^2} + O_{y,z}\left(\frac{x}{\log^2 x}\right).
$$
Thus, for $y\ge z$,
\begin{equation}\label{adef1}
 a_{y,z} =  \int_1^\infty c_{yt,z} \, \frac{dt}{t^2}.
\end{equation}
To show \eqref{adef2}, we substitute \eqref{tdensecon} into \eqref{adef1} to get
$$ a_{y,z}= C \int_1^\infty \sum_{n\ge 1} \frac{\chi_{yt,z}(n)}{n}  \bigl( \Sigma(nyt)-\Sigma(z) - \log n\bigr) \Pi(nyt) 
\frac{dt}{t^2}.
$$
Now replace $\Sigma(nyt)-\Sigma(z)-\log n$ by $(\Sigma(nyt)+\gamma-\log(nyt)) +(\log(yt)-\gamma - \Sigma(z))$, and observe that
\begin{multline*}
 \int_1^\infty \sum_{n\ge 1} \frac{\chi_{yt,z}(n)}{n} (\log yt-\gamma-\Sigma(z)) \Pi(nyt)
\frac{dt}{t^2} 
\\ =\Pi(z)\int_1^\infty (\log yt-\gamma-\Sigma(z))  \, \frac{dt}{t^2}
=\Pi(z)(1+\log y -\gamma - \Sigma(z)),
\end{multline*}
since
$
 \sum_{n\ge 1} \frac{\chi_{yt,z}(n)}{n}  \Pi(nyt)=\Pi(z),
$
by Lemma \ref{A2}.
Thus,
\begin{equation*}
a_{y,z}  = C\Pi(z)(1+ \log y-\gamma-\Sigma(z))+ C \int_1^\infty \sum_{n\ge 1} \frac{\chi_{yt,z}(n)}{n}g(nyt) \frac{dt}{t^2}.
\end{equation*}
By Lemma \ref{lem1}, with $f(u) = \frac{g(u)}{u}$, the last integral equals
$$
  \int_1^\infty A(t,y,z) g(yt) \frac{dt}{t^2}.
$$ 
This concludes the proof of \eqref{adef2}. 

The estimates \eqref{ateq1} and \eqref{ateq2} follow from \eqref{adef2}, Corollary \ref{Acor1} and \eqref{etadef}.

The estimate \eqref{ateq4} follows from  \eqref{adef2} and the fact that, for $t\ge 1$, 
$$
-0.12 <\gamma-\log 2 <\Sigma(t)+\gamma-\log t \le \Sigma(3)+\gamma-\log 3 < 0.73.
$$
Indeed, from \eqref{adef2} we have
$$
\frac{  a_{y,z}}{C \Pi(z)}   =1+ \log(y/z) - (\Sigma(z)+\gamma-\log z)+  \frac{1}{\Pi(z)}\int_1^\infty A(t,y,z) g(yt) \frac{dt}{t^2}.
$$
The bounds on $\Sigma(t)+\gamma-\log t$ imply that
$$
1+\log(y/z) - 0.73 -0.12 < \frac{  a_{y,z}}{C \Pi(z)}  < 1 + \log(y/z) +0.12 + 0.73 .
$$
Thus, for $y\ge z \ge 1$, 
$$
 C \Pi(z)(0.15+\log(y/z)) < a_{y,z} < C \Pi(z) (1.85+\log(y/z)).
$$
When $y<z$, \eqref{ateq4} follows from $a_{y,z}=a_{z,z}y/z$. 

To deduce a relationship between $\tau_{y,z}$ and $a_{y,z}$, we equate the estimates for $A(x,y,z)$ of Theorems \ref{thmAD} and \ref{thmA}
and use Theorem \ref{duv} to estimate $d(u,v)$. This shows that
$C\Pi(z) \log(y/z) +\tau_{y,z} = a_{y,z}$.
The estimate \eqref{taueq2} now follows from \eqref{ateq2}.
\end{proof}

\section{Algorithms for computing $c_{y,z}$ and $a_{y,z}$}\label{secacomp}

\subsection{The numerical computation of $c_{y,z}$}
From \eqref{tdensecon} we have
$$
\frac{c_{y,z}}{C} = \sum_{n\ge 1}\frac{\chi_{y,z}(n)}{n} \bigl(\Sigma(yn)-\Sigma(z) -\log n\bigr) \Pi(yn) = \sum_{n\le N} + \sum_{n>N} =: S_1+S_2,
$$
say.
We calculate $S_1$ on a computer. Let
$$
\varepsilon(N) : =  \sum_{n>N}\frac{\chi_{y,z}(n)}{n} \Pi(yn) = \Pi(z) - \sum_{n\le N}\frac{\chi_{y,z}(n)}{n} \Pi(yn),
$$
by Lemma \ref{A2}. The last expression allows us to calculate $\varepsilon(N)$ on a computer. 
It follows that the contribution from $n>N$ satisfies
$$
 (\log y - \gamma - \Sigma(z)  - \mathcal{E}(yN)) \varepsilon(N) \le S_2 \le  (\log y - \gamma - \Sigma(z)+ \mathcal{E}(yN)) \varepsilon(N)  ,
$$
where $\mathcal{E}(x)$ is defined in \eqref{etadef}.
The values in Table \ref{tab1} now follow from a table of values of $\mathcal{E}(2^k)$ for $24\le k \le 38$ in \cite[Lemma 4]{CFP}.

\subsection{The numerical computation of $a_{y,z}$}
Since $a_{y,z}= a_{z,z}y/z$ if $y<z$, we may assume $y\ge z \ge 1$. To estimate the tail of the integral in \eqref{adef2}, we write
$$ \Biggl| \int_N^\infty A(t,y,z) g(yt)   \frac{dt}{t^2}  \Biggr| \le  \mathcal{E}(yN) \, \varepsilon(N),
$$
where $\mathcal{E}(x)$ is as in \eqref{etadef} and 
$$
 \varepsilon(N) :=  \int_N^\infty A(t,y,z) \Pi(yt)    \frac{dt}{t^2}
=\Pi(z)- \int_1^N A(t,y,z)  \Pi(yt)  \frac{dt}{t^2},
$$
by Corollary \ref{Acor1}. The last expression allows us to find $\varepsilon(N)$ on a computer.  Let
$$ J(N)= \int_1^N A(t,y,z)   g(yt)    \frac{dt}{t^2},
$$
which we can also find on a computer.
Thus
$$        a_{y,z} =C\Pi(z)(1-\gamma+ \log y -\Sigma(z))+C(J(N)+ Q(N)) ,$$
where
$|Q(N)| \le   \mathcal{E}(yN) \, \varepsilon(N).$
The values in Table \ref{tab2} now follow from a table of values of $\mathcal{E}(2^k)$ for $24\le k \le 38$ in \cite[Lemma 4]{CFP}.
With $N=2^{33}$ and $y=z=1$, we find that $ a=a_{1,1}= 1.53796...$, as claimed in Theorem \ref{thm1}.

\section{Proof of Theorem \ref{corBrec}}\label{secproofcorBrec}

De la Vall\'ee Poussin \cite{Poussin} showed that $\sum_{p \le x}\{\frac{x}{p }\} =(1-\gamma +o(1)) \frac{x}{\log x}$.
His proof can easily be adapted to obtain an explicit error term.
\begin{lemma}\label{pflem}
For $x\ge 2$, we have
$$
\sum_{p \le x}\left\{\frac{x}{p }\right\} =  (1-\gamma) \frac{x}{\log x} +  O\left(\frac{x}{(\log x)^2}\right).
$$
\end{lemma}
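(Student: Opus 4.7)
The plan is to invoke the prime number theorem to replace the prime sum by a tractable integral and a small error, then compute the integral by a change of variables. Writing $\{x/p\} = x/p - \lfloor x/p\rfloor$ and using Abel summation together with the identity $\sum_{p\le x}\lfloor x/p\rfloor = \sum_{m\ge 1}\pi(x/m)$ yields the exact formula
$$
S(x) := \sum_{p\le x}\{x/p\} = x\int_2^x \frac{\pi(t)}{t^2}\,dt - \sum_{m=2}^{\lfloor x/2\rfloor}\pi(x/m) = \sum_{m\ge 1}A_m,
$$
where $A_m := x\int_{x/(m+1)}^{x/m}\pi(t)/t^2\,dt - \pi(x/(m+1))\in [0,1)$ collects the contribution from primes $p$ with $\lfloor x/p\rfloor = m$. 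Decomposing via $\pi = \mathrm{Li} + R$ with the PNT error $R(t)\ll t\exp(-c\sqrt{\log t})$ splits $A_m = A_m^{\mathrm{Li}} + A_m^R$.

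For the $\mathrm{Li}$-part, an analogous Stieltjes integration by parts applied to $\int_2^x \{x/t\}\,d\mathrm{Li}(t)$, accounting for the upward unit jumps of $\{x/t\}$ at $t = x/n$ for $n = 2, \dots, \lfloor x/2\rfloor$, shows that $\sum_m A_m^{\mathrm{Li}} = \int_2^x \{x/t\}/\log t\,dt + O(1)$. The integral evaluates via the substitution $u = x/t$ to $x\int_1^{x/2}\{u\}/(u^2\log(x/u))\,du$; the range $u > \sqrt{x}$ contributes only $O(\sqrt{x}/\log x)$, while for $u \le \sqrt{x}$ the Taylor expansion $1/\log(x/u) = 1/\log x + O(\log u/\log^2 x)$ combined with the classical identity $\int_1^\infty\{u\}/u^2\,du = 1-\gamma$ produces $(1-\gamma)x/\log x + O(x/\log^2 x)$.

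It remains to bound $\sum_m A_m^R$. I would split at $m = \sqrt{x}$. For $m\le\sqrt{x}$, the argument $x/m\ge\sqrt{x}$ is large, so the PNT-strength decay $|R(x/m)| \ll (x/m)\exp(-c'\sqrt{\log x})$ controls both constituents of $A_m^R$, giving total $O(x\exp(-c''\sqrt{\log x}))$. For $m>\sqrt{x}$, each interval $(x/(m+1), x/m]$ has length $<1/\sqrt{x}$ and thus contains at most one prime; at most $\pi(\sqrt{x}) = O(\sqrt{x}/\log x)$ values of $m$ actually do (each contributing $|A_m^R|\le 1 + |A_m^{\mathrm{Li}}|$), while for the rest $A_m = 0$ and $|A_m^R| = |A_m^{\mathrm{Li}}|$, with $|A_m^{\mathrm{Li}}| \ll x/(m^2\log(x/m))$ by an elementary calculation; summing via $v = x/m$ yields $O(\sqrt{x}/\log x)$. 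Overall $\sum_m|A_m^R| = O(\sqrt{x}/\log x)$, comfortably inside the target error. The principal technical obstacle is establishing the $|A_m^{\mathrm{Li}}|$ bound for large $m$: the two explicit terms in the formula for $A_m^{\mathrm{Li}}$, namely $-m[\mathrm{Li}(x/m) - \mathrm{Li}(x/(m+1))]$ and $x\log(\log(x/m)/\log(x/(m+1)))$, each have a first-order leading piece that cancels, and the residual second-order term must be extracted correctly through careful Taylor expansion in $1/m$.
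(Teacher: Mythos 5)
The paper gives no explicit proof of this lemma, only a pointer to De la Vall\'ee Poussin (1898) with the remark that an explicit error term is easily extracted; your argument is a correct and complete way of supplying those details, along what is surely the intended route (sort primes by $\lfloor x/p\rfloor$, decompose $\pi=\mathrm{Li}+R$, evaluate the $\mathrm{Li}$-part via $u=x/t$ and $\int_1^\infty\{u\}u^{-2}\,du=1-\gamma$, bound the $R$-part in two regimes). A few small inaccuracies worth tidying, none of which affect validity: the parenthetical $A_m\in[0,1)$ is false for small $m$, where $(x/(m+1),x/m]$ can contain many primes --- it holds only once $m>\sqrt x$, which is the only place you use it; the interval length for $m>\sqrt x$ is $x/(m(m+1))<1$, not $<1/\sqrt x$, but the conclusion you need (at most one prime per interval) is unaffected; and the combined bound for $\sum_m|A_m^R|$ should be stated as $O\bigl(x\exp(-c\sqrt{\log x})\bigr)$, since the $m\le\sqrt x$ contribution you computed dominates the $O(\sqrt x/\log x)$ tail for large $x$ --- still comfortably $o(x/(\log x)^2)$. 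Finally, the ``principal technical obstacle'' you flag at the end is avoidable: the same Stieltjes integration by parts that gives $A_m=\int_{x/(m+1)}^{x/m}(x/t-m)\,d\pi(t)$ gives $A_m^{\mathrm{Li}}=\int_{x/(m+1)}^{x/m}\{x/t\}(\log t)^{-1}\,dt$, from which $|A_m^{\mathrm{Li}}|\le x/\bigl(m(m+1)\log(x/(m+1))\bigr)$ follows immediately with no cancellation or Taylor expansion required.
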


\begin{lemma}\label{BpAp}
For every prime $p$,
$$
|\{n\in \mathcal{B}(x): P^{-}(n)=p \}| = A(x/p,p,p) \qquad (x\ge p).
$$
\end{lemma}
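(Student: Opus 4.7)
The plan is to establish a bijection between the two sets via the map $n \mapsto m := n/p^{\nu_p(n)}$, i.e., stripping off the full $p$-part of $n$. Note that by the definition of $A$, the right-hand side $A(x/p,p,p)$ counts exactly those $m \le x/p$ with $F(m) \le (x/p)\cdot p = x$ and $P^-(m) > p$, so the task is to match this set bijectively with $\{n\in \mathcal{B}(x) : P^-(n)=p\}$.

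For the forward direction, suppose $n \in \mathcal{B}(x)$ with $P^-(n) = p$, and set $a = \nu_p(n) \ge 1$, $m = n/p^a$. Then $P^-(m) > p$ by the maximality of $a$; $m$ is a proper divisor of $n$ (since $p^a \ge p > 1$), so $F(m) \le x$ by the defining property of $\mathcal{B}(x)$; and $m \le n/p \le x/p$. For the inverse direction, given $m$ with $m \le x/p$, $F(m) \le x$, $P^-(m) > p$, define $a$ by $p^a m \le x < p^{a+1} m$; the hypothesis $m \le x/p$ forces $a \ge 1$. Set $n = p^a m$, so $P^-(n) = p$ and $n \le x$. The key computational input is the identity
\[
F(p^b m) = \max\bigl(p^{b+1} m,\ F(m)\bigr) \qquad (b \ge 1,\ \gcd(p,m) = 1,\ P^-(m) > p),
\]
obtained by writing a divisor $d$ of $p^b m$ as $d = p^c m'$ with $0 \le c \le b$, $m' \mid m$, and splitting into the cases $c \ge 1$ (where $P^-(d) = p$ and $dP^-(d) = p^{c+1}m'$ is maximized at $c=b$, $m'=m$) and $c=0$ (where $dP^-(d) = m'P^-(m') \le F(m)$). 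This yields $F(n) \ge p^{a+1} m > x$, so $F(n) > x$. By monotonicity of $F$ on divisors, the minimality condition $F(d) \le x$ for all $d \mid n$, $d < n$, reduces to checking $F(n/q) \le x$ for each prime $q \mid n$. For $q = p$ the identity gives $F(n/p) = \max(n, F(m))$ (or $F(m)$ when $a=1$), and both are $\le x$. For $q \mid m$ one has $F(n/q) = \max(np/q, F(m/q))$, with $F(m/q) \le F(m) \le x$ automatic.

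The one subtle step—and where the strict inequality $P^-(m) > p$ does essential work—is bounding $np/q$ for $q \mid m$. A priori one only has $np > x$, so the required bound $np/q \le x$ is not immediate. But $q \ge P^-(m) \ge p+1$, and the maximality of $a$ gives $np = p^{a+1} m \le p\cdot x$; hence $np/q \le xp/(p+1) < x$. This completes the verification that $n \in \mathcal{B}(x)$, and the two maps are manifestly inverse to one another because $a$ is determined on each side by the same double inequality $p^a m \le x < p^{a+1} m$.
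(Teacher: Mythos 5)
Your proof is correct, but it takes a genuinely different route from the paper's. The paper factors $n=pm$ (stripping a \emph{single} factor of $p$) and relies on the set identity $p\,\mathcal{A}(x/p,p,p-1)=\{n\in\mathcal{A}(x):P^-(n)=p\}\cup\{n\in\mathcal{B}(x):P^-(n)=p\}$, a disjoint union; it then subtracts, using the fact that $\{n\in\mathcal{A}(x):P^-(n)=p\}$ is counted by $A(x/p,p,p-1)-A(x/p,p,p)$ because $F(n)\le x$ and $P^-(n)=p$ already force $n\le x/p$. You instead strip the \emph{entire} $p$-part, $n\mapsto m=n/p^{\nu_p(n)}$, and build an explicit bijection directly onto $\{m\le x/p: F(m)\le x,\ P^-(m)>p\}$, with $a$ recovered on each side from $p^a m\le x<p^{a+1}m$. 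Your approach is more self-contained (it never routes through $\mathcal{A}(x)$ or the intermediate parameter $p-1$) at the cost of the explicit computation $F(p^b m)=\max(p^{b+1}m,F(m))$ and the case analysis over the prime $q$ removed; the paper's route is shorter once the union identity is observed, but leaves that identity and the last equality to the reader. The details of your argument check out, including the crucial bound $np/q\le px/(p+1)<x$ coming from $q\ge P^-(m)>p$. One small misattribution: the inequality $np=p^{a+1}m\le px$ is just $n\le x$ scaled by $p$ (equivalently, the lower half $p^a m\le x$ of the defining double inequality), not a consequence of the \emph{maximality} of $a$; the maximality gives the opposite-direction bound $p^{a+1}m>x$, which you use separately to get $F(n)>x$.
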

\begin{proof}
For $x\ge p$, 
$$
p \mathcal{A}(x/p,p,p-1) = \{n\in \mathcal{A}(x): P^-(n)=p\} \cup \{n \in \mathcal{B}(x): P^-(n)=p\}.
$$
Thus,
\begin{multline*}
|\{n \in \mathcal{B}(x): P^-(n)=p\}| =A(x/p,p,p-1)-|\{n\in \mathcal{A}(x): P^-(n)=p\} | \\= A(x/p,p,p).
\end{multline*}
\end{proof}

Since 
$ \lfloor x \rfloor = A(x) + \sum_{n\in \mathcal{B}(x)} \left\lfloor \frac{x}{n} \right\rfloor,$
Theorem \ref{corBrec} follows from Theorems \ref{thm1} and \ref{thmBfrac}.

\begin{theorem}\label{thmBfrac}
For $x\ge 2$, we have
$$
\sum_{n\in \mathcal{B}(x)} \left\{\frac{x}{n}\right\} =\frac{(1-\gamma + \beta ) x}{\log x} + O\left(\frac{x}{(\log x)^{3/2}}\right),
$$
where 
\begin{equation}\label{betadef}
\beta = \sum_{p\ge 2} \beta_p := \sum_{p\ge 2} \left( \int_{p}^{p^2} \frac{a_{y,p-1}}{yp} dy -\sum_{2\le k \le p}  \frac{a_{kp,p-1}}{kp} \right) 
= 0.554... 
\end{equation}
and $\beta_p \ll \frac{1}{p \log p}$. More precisely, $0.554604<\beta<0.554806$. 
\end{theorem}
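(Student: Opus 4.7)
My plan is to decompose $\sum_{n \in \mathcal{B}(x)} \{x/n\}$ by the smallest prime factor $p = P^-(n)$. Since $F(p) = p^2$ and the only proper divisor of $p$ is $1$, a prime $p$ belongs to $\mathcal{B}(x)$ iff $p > \sqrt{x}$. Thus
$$
\sum_{n \in \mathcal{B}(x)} \left\{\frac{x}{n}\right\}
= \sum_{\sqrt{x} < p \le x} \left\{\frac{x}{p}\right\} + \sum_{p \le \sqrt{x}} T_p(x),
\quad
T_p(x) := \sum_{\substack{n \in \mathcal{B}_p(x) \\ n > p}} \left\{\frac{x}{n}\right\},
$$
with $\mathcal{B}_p(x) := \{n \in \mathcal{B}(x): P^-(n) = p\}$. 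Lemma~\ref{pflem} applied to $\sum_{p \le x}\{x/p\}$, together with the trivial bound $\sum_{p \le \sqrt{x}}\{x/p\} \ll \pi(\sqrt{x}) \ll \sqrt{x}/\log x$, gives the prime part as $(1-\gamma)x/\log x + O(x/\log^2 x)$. This already accounts for the entire main term of Theorem~\ref{thmBfrac}, so the correction $\beta x/\log x$ must come from $\sum_p T_p(x)$.

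\textbf{The composite contribution $T_p(x)$.} For fixed $p \le \sqrt{x}$ the composite elements of $\mathcal{B}_p(x)$ all lie in $[p^2, x]$, and $k := \lfloor x/n\rfloor$ ranges over $[1, x/p^2]$. The central ingredient is the exact expression, obtained from the proof of Lemma~\ref{BpAp}, for the counting function $C_p(Y) := |\mathcal{B}_p(x) \cap [1, Y]|$ on $Y \in [p^2, x]$:
$$
C_p(Y) = A(Y/p,\, xp/Y,\, p-1) \,-\, |\{n \in \mathcal{A}(x): P^-(n) = p,\, n \le Y\}|,
$$
which uses the identity $\{m \le Y/p : F(m) \le x,\, P^-(m) \ge p\} = \mathcal{A}(Y/p,\, xp/Y,\, p-1)$. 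Theorem~\ref{thmA} applied to the $A$-term (and analogously to the $\mathcal{A}_p$-term) supplies a sharp asymptotic with leading coefficient $a_{xp/Y,\,p-1}$. Breaking $T_p(x) = \sum_{k} \sum_{x/(k+1) < n \le x/k,\, n \in \mathcal{B}_p(x)} (x/n - k)$ by $k$, applying partial summation against $C_p$, and then substituting $y := xp/Y$ (so $y$ traverses $[p, p^2]$ as $Y$ traverses $[x/p, x]$, with $y = kp$ corresponding to $Y = x/k$) identifies the continuous part of the density of $C_p$ with the integral $(x/\log x) \int_p^{p^2} a_{y, p-1}/(yp)\,dy$, and the boundary contributions at $Y = x/k$ for $k = 2, \ldots, p$ with the discrete sum $-(x/\log x)\sum_{2 \le k \le p} a_{kp, p-1}/(kp)$. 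Combined, these produce $T_p(x) = \beta_p \cdot x/\log x + (\mathrm{error})_p$ with $\beta_p$ as in \eqref{betadef}.

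\textbf{Summation over $p$ and main obstacle.} Convergence of $\beta = \sum_p \beta_p$ follows from the bound $|\beta_p| \ll 1/(p \log p)$, which is essentially a trapezoidal-rule estimate for the sum-versus-integral difference in $\beta_p$, using the smoothness and the size bound $a_{y, p-1} \asymp \Pi(p-1)\log(1+y/p)$ from \eqref{ateq4}. The numerical bounds $0.554604 < \beta < 0.554806$ then come from a finite numerical computation combined with an effective tail estimate. The principal difficulty is controlling $\sum_{p \le \sqrt{x}} (\mathrm{error})_p$: a naive application of Theorem~\ref{thmA} loses a factor of $\log x$ when $p$ approaches $\sqrt{x}$, because there $C_p(Y)$ is comparable in size to its own error term. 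I expect to resolve this by a dyadic split of the range of $p$, invoking the sharp estimate of Theorem~\ref{thmA} for small and moderate $p$ and the cruder bound \eqref{secondUB} in the transition regime $p \asymp \sqrt{x}$, and using the near-cancellation between the integral and the discrete parts of $\beta_p$ to absorb residual errors and keep the total within $O(x/(\log x)^{3/2})$.
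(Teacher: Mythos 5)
Your decomposition by $P^-(n)=p$, the evaluation of the prime part via Lemma~\ref{pflem}, the partial summation of the composite part against the counting function $A(x/y,y,p-1)$ over $y\in[kp,(k+1)p]$, the identification of the coefficient $\beta_p=\int_p^{p^2}a_{y,p-1}\,dy/(yp)-\sum_{2\le k\le p}a_{kp,p-1}/(kp)$, the bound $\beta_p\ll 1/(p\log p)$, and the use of a crude bound (via Lemma~\ref{BpAp} and \eqref{secondUB}) for large $p$ together with the sharp Theorem~\ref{thmA} estimate for small $p$ --- all of this is precisely the paper's argument, and your outline is correct.

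One caution on the error analysis you flagged as the principal difficulty. The crossover where the sharp asymptotic for $A(x/y,y,p-1)$ stops beating the trivial bound is not at $p\asymp\sqrt{x}$ but at $\log p\asymp\sqrt{\log x}$, i.e.\ $p\asymp\exp(\sqrt{\log x})=x^{o(1)}$, far below $\sqrt{x}$. The paper introduces a single cutoff $U$ with $\log U=\sqrt{\log x}$: for $p\le U$ the sharp estimate errors accumulate to $\ll x\log U/(\log x)^2+x(\log U)^3/(\log x)^3$; for $U<p\le\sqrt{x}$ the entire block is bounded trivially by $\{x/n\}<1$ and $A(x/p,p,p)\ll\pi(\sqrt{x})+x/(p\log p\log x)$, giving $\ll x/(\log x\log U)$; and the main-term tail $\sum_{p>U}\beta_p\ll 1/\log U$ contributes another $x/(\log x\log U)$. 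Balancing forces $\log U=\sqrt{\log x}$, which is where the exponent $3/2$ comes from. A dyadic split over $\log p\asymp j$ would rediscover this crossover, so your plan is salvageable, but two of your stated heuristics would mislead if followed literally: the transition occurs at $\log p\asymp\sqrt{\log x}$ rather than $p\asymp\sqrt{x}$, and the near-cancellation between the integral and the discrete sum inside $\beta_p$ only controls the size of $\beta_p$ itself (hence the tail $\sum_{p>U}\beta_p$) --- it does not absorb the residual errors coming from Theorem~\ref{thmA}, which are handled by the cutoff alone.
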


\begin{proof}
 We write
\begin{equation*}
\sum_{n\in \mathcal{B}(x)} \left\{\frac{x}{n}\right\} 
= \sum_{p\ge 2} \sum_{n\in \mathcal{B}(x) \atop P^-(n)=p} \left\{\frac{x}{n}\right\} =S_1+S_2+S_3,
\end{equation*}
where 
$$
S_3 :=  \sum_{\sqrt{x}<p \le x}\left\{\frac{x}{p }\right\} =  (1-\gamma) \frac{x}{\log x} +  O\left(\frac{x}{(\log x)^2}\right),
$$
by Lemma \ref{pflem}. With the trivial estimate $\{x/n\}<1$ and Lemma \ref{BpAp}, we have
\begin{equation}\label{S2UB}
S_2 :=  \sum_{U< p\le \sqrt{x}} \sum_{n\in \mathcal{B}(x) \atop P^-(n)=p} \left\{\frac{x}{n}\right\}
\le  \sum_{U< p\le \sqrt{x}} A(x/p,p,p) \ll \frac{x}{(\log x) (\log U)},
\end{equation}
since \eqref{secondUB} implies
\begin{equation*}
A(x/p,p,p)  \ll \pi(\sqrt{x}) + \frac{x}{p (\log p) (\log x)}.
\end{equation*}
It remains to estimate 
\begin{equation*}
S_1:=  \sum_{p \le U} \sum_{n\in \mathcal{B}(x) \atop P^-(n)=p} \left\{\frac{x}{n}\right\}
= \sum_{p \le U} \sum_{1\le k < p} 
\sum_{\frac{x}{(k+1)p} < m \le \frac{x}{kp} \atop F(m)\le x, \ P^-(m)\ge p} \left( \frac{x}{mp} - k \right).
\end{equation*}
The innermost sum equals
\begin{multline*}
-\int_{kp}^{(k+1)p} \left(\frac{y}{p}-k\right) dA(x/y,y,p-1)  \\
= -A(x/y,y,p-1) \left(\frac{y}{p}-k\right) \Bigl|_{kp}^{(k+1)p}+\frac{1}{p}\int_{kp}^{(k+1)p} A(x/y,y,p-1) dy,
\end{multline*}
as a result of integration by parts. Theorem \ref{thmA} yields
\begin{equation}\label{Axyyp}
A(x/y,y,p-1) = \frac{a_{y,p-1} x}{y \log x} + O\left(\frac{x \log (1+\frac{y}{p}) }{y(\log x)^2 \log p}+\frac{x \log y \log (1+\frac{y}{p})}{y (\log x)^3}\right),
\end{equation}
for $ p\le y \le p^2 \le x^{1/3}$.
We find that the last sum over $m$ equals
$$
\frac{\beta_{p,k} x}{\log x} +  O\left(\frac{x \log(k+1)}{p(\log p) k (\log x)^2}+\frac{x \log p \log (k+1)}{pk (\log x)^3}\right),
$$
where
$$
\beta_{p,k} := \int_{kp}^{(k+1)p} \frac{a_{y,p-1}}{yp} dy -\frac{a_{(k+1)p,p-1}}{(k+1)p},
$$
provided $U\le x^{1/6}$.
Note that $\beta_p:=\sum_{1\le k <p} \beta_{p,k} \ll \frac{1}{p\log p}$ for each prime $p$, 
as the two innermost sums in the definition of $S_1$ amount to $<A(x/p,p,p)$, by the trivial estimate $\{x/n\}<1$, as in the case of $S_2$.
Summing over $1\le k <p$ and then over $p\le U$, the contribution from the error term is 
$$\ll \frac{x \log U}{(\log x)^2} + \frac{x (\log U)^3}{(\log x)^3},$$ 
while the main term contributes
$$
\frac{\beta x}{\log x} + O\left( \sum_{p>U} \frac{\beta_p x}{\log x}\right) = \frac{\beta x}{\log x} + O\left(  \frac{x}{(\log x)(\log U)}\right).
$$
The result now follows with $\log U = \sqrt{\log x}$. 

The numerical calculation of $\beta$ is based on formulas  \eqref{betadef} and \eqref{adef2}. The details are given in Section \ref{secnumbeta}.
\end{proof}

\section{Proof of Theorem \ref{Rcor}}\label{secRcor}

To derive Theorem \ref{Rcor} from Theorem \ref{corBrec}, we will need the following estimate. 
\begin{proposition}\label{Rprop}
Let $q\ge 1$ be a fixed integer. For $x\ge 2$, we have
$$
\sum_{n\in \mathcal{B}(x) \atop \frac{x}{q+1} < n \le \frac{x}{q}} \frac{1}{n} = \frac{\mu_q}{\log x} + O_q\left(\frac{1}{\log^2 x}\right),
$$
where 
\begin{equation}\label{muqdef}
\mu_q = \log(1+1/q) + \sum_{p > q} \frac{1}{p} \left( a_{qp,p-1}-a_{(q+1)p,p-1} + \int_{qp}^{(q+1)p} a_{y,p-1} \frac{dy}{y} \right).
\end{equation}
We have $0.401720 <\mu_5 < 0.401815$ and $0.197932 < \mu_{11} < 0.197989$.
\end{proposition}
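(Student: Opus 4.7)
The plan is to mirror the proof of Theorem \ref{thmBfrac}, restricting the range of $n$ to $(x/(q+1), x/q]$ and replacing the weight $\{x/n\}$ by $1/n$. I would decompose
\[
T := \sum_{n\in \mathcal{B}(x),\, x/(q+1) < n \le x/q} \frac{1}{n}
\]
according to $p = P^-(n)$, writing $T = \sum_p T_p$. For any $n \in \mathcal{B}(x)$ each proper divisor $d$ satisfies $dP^-(d) \le F(d) \le x$, so $F(n) > x$ reduces to $nP^-(n) = pn > x$. When $p \le q$ this forces $n > x/p \ge x/q$, contradicting $n \le x/q$; hence $T_p = 0$ for $p \le q$ (for $x$ large enough in terms of $q$).

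For $p > q$, the bijection used in the proof of Theorem \ref{thmBfrac} between $\{n \in \mathcal{B}(x): P^-(n) = p\}$ and $\bigcup_{k=1}^{p-1}\{m: x/((k+1)p) < m \le x/(kp),\ F(m)\le x,\ P^-(m)\ge p\}$, via $n = pm$ and $k = \lfloor x/n \rfloor$, singles out the index $k = q$ under our range restriction, so
\[
T_p = \frac{1}{p}\sum_{\substack{x/((q+1)p) < m \le x/(qp) \\ F(m)\le x,\ P^-(m)\ge p}} \frac{1}{m}.
\]
Substituting $m = x/y$ and integrating by parts (as in Theorem \ref{thmBfrac}, but now with weight $1/m = y/x$) gives
\[
T_p = \frac{qA(x/(qp), qp, p-1) - (q+1)A(x/((q+1)p), (q+1)p, p-1)}{x} + \frac{1}{xp}\int_{qp}^{(q+1)p} A(x/y, y, p-1)\,dy.
\]
Inserting the estimate \eqref{Axyyp} for $A(x/y, y, p-1)$ with $y\in[qp,(q+1)p]$ transforms this into
\[
T_p = \frac{1}{p\log x}\!\left( a_{qp, p-1} - a_{(q+1)p, p-1} + \int_{qp}^{(q+1)p}\!\frac{a_{y, p-1}}{y}\,dy \right) + O_q\!\left(\frac{1}{p\log^2 x\,\log p}\right).
\]

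For $p > \sqrt{x/q}$, a composite $n = pm$ with $P^-(m) \ge p$ would satisfy $n \ge p^2 > x/q$; so the only remaining contribution comes from primes $p \in (x/(q+1), x/q]$ themselves (each of which lies in $\mathcal{B}(x)$, since $p > \sqrt{x}$), and by an effective form of Mertens' theorem these give $\sum_{x/(q+1) < p \le x/q} 1/p = \log(1+1/q)/\log x + O_q(1/\log^2 x)$, supplying the $\log(1+1/q)$ term in \eqref{muqdef}. Summing the formula for $T_p$ over primes $q < p \le \sqrt{x/q}$ produces the series in \eqref{muqdef}; the tail $p > \sqrt{x/q}$ of that series is $O_q(1/\log x)$ (hence $O_q(1/\log^2 x)$ after dividing by $\log x$), since the general term is $O_q(1/(p\log p))$ by \eqref{ateq2} and $\sum_p 1/(p\log p)$ converges.

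The main technical point is verifying that the cumulative error arising from \eqref{Axyyp} remains within $O_q(1/\log^2 x)$ as $p$ varies over $(q,\sqrt{x/q}]$: the factor $1/p$ in each $T_p$, combined with the convergence of $\sum_p 1/(p\log p)$ and the bound $a_{y,p-1} \ll_q 1/\log p$ furnished by \eqref{ateq2}, keeps the total error under control. The numerical bounds $0.401720 < \mu_5 < 0.401815$ and $0.197932 < \mu_{11} < 0.197989$ are established by the algorithm detailed in Section \ref{secnummuq}.
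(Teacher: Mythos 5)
Your decomposition by $p=P^-(n)$, the observation that $T_p=0$ for $p\le q$ (since $nP^-(n)>x$ forces $P^-(n)>q$ in the range), the integration-by-parts identity for $T_p$, and the treatment of the primes $p\in(x/(q+1),x/q]$ themselves all match the paper's proof. However, there is a genuine gap in the range over which you apply \eqref{Axyyp}.

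You sum the formula coming from \eqref{Axyyp} over all primes $q<p\le\sqrt{x/q}$. But \eqref{Axyyp} is only stated ``for $p\le y\le p^2\le x^{1/3}$,'' i.e.\ it requires $p\le x^{1/6}$; beyond that range its main term $\frac{a_{y,p-1}x}{y\log x}$ no longer dominates the absorbed contribution $1+\pi(\sqrt{x},p-1)$, and the error estimate as written is not justified. Since $\sqrt{x/q}$ is far larger than $x^{1/6}$, this leaves an intermediate regime $x^{1/6}<p\le\sqrt{x/q}$ that your argument does not cover. The paper handles exactly this gap with a separate trivial bound: it introduces a cutoff $U\le x^{1/6}$, uses \eqref{Axyyp} only for $p\le U$, and bounds the contribution from $U<p\le\sqrt{x}$ by $(q+1)/x$ times $A(x/p,p,p)$ together with Lemma~\ref{BpAp} and \eqref{secondUB}, giving $O\bigl((q+1)/(\log x\,\log U)\bigr)$; the choice $U=x^{1/6}$ then makes every piece $O_q(1/\log^2 x)$. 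To repair your proof you need to insert precisely this middle-range estimate (your $S_2$ analogue) and restrict the application of \eqref{Axyyp} to $p\le x^{1/6}$.
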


We first show how Theorem \ref{Rcor} follows from Proposition \ref{Rprop} and Theorem \ref{corBrec}.
\begin{proof}[Proof of Theorem \ref{Rcor}]
Let $\mathcal{B}'(x)$ be the set obtained from $\mathcal{B}(x)$, by replacing every $n\in (\frac{x}{6},\frac{x}{5}]\cap \mathcal{B}(x)$
by $\{2n,3n,5n\}$, and every $n\in (\frac{x}{12},\frac{x}{11}]\cap \mathcal{B}(x)$ by $\{3n,4n,5n,7n,11n\}$.
Let $n_1\in (x/6,x/5] \cap \mathcal{B}(x)$, $m_1\in \{2,3,5\}$, $n_2 \in (x/12,x/11]\cap \mathcal{B}(x)$ and $m_2 \in \{3,4,5,7,11\}$. 
Then, $m_i n_i \notin \mathcal{B}(x)$ for $i=1,2$. 
Also $n_1 m_1 \not= n_2 m_2$. Indeed, if $n_1 m_1 = n_2 m_2$ then $m_2>m_1$ and $m_1|m_2 n_2$. 
If $(m_1,m_2)=(2,4)$ then $2|n_1$, otherwise $m_1|n_2$.
Either of these is impossible since $n_i P^-(n_i)>x$ implies  $P^-(n_1) >5$ and $P^-(n_2) >11$.

Since $\mathcal{B}(x)$ has the $\lcm$ property \eqref{lcmprop}, so does $\mathcal{B}'(x)$. 
Theorem \ref{corBrec} and Proposition \ref{Rprop} yield
$$
\sum_{n\in \mathcal{B}'(x)} \frac{1}{n} = \sum_{n\in \mathcal{B}(x)} \frac{1}{n} 
+ \frac{1}{30}\sum_{n\in \mathcal{B}(x) \atop \frac{x}{6} < n \le \frac{x}{5}} \frac{1}{n} 
+ \frac{79}{4620}\sum_{n\in \mathcal{B}(x) \atop \frac{x}{12} < n \le \frac{x}{11}} \frac{1}{n} =1 -\frac{\kappa}{\log x} +O\left(\frac{1}{(\log x)^{3/2}}\right),
$$
where 
\begin{equation}\label{kappadef}
\kappa = \delta - \mu_5 \frac{1}{30}-\mu_{11}\frac{79}{4620} =0.543...
\end{equation}
More precisely, $0.543595<\kappa<0.543804$.
\end{proof}

\begin{proof}[Proof of Proposition \ref{Rprop}]
This proof is similar to that of Theorem \ref{thmBfrac}. 
Let $q\ge 1$ be a fixed integer. Since $nP^-(n)>x$ for all $n\in \mathcal{B}(x)$, $n\in \mathcal{B}(x) \cap (x/(q+1),x/q]$ implies $P^-(n)>q$. 
We write
\begin{equation*}
\sum_{n\in \mathcal{B}(x) \atop \frac{x}{q+1} < n \le \frac{x}{q}} \frac{1}{n}
= \sum_{p>q} \sum_{{n\in \mathcal{B}(x) \atop   P^-(n)=p} \atop \frac{x}{q+1} < n \le \frac{x}{q}} \frac{1}{n} =S_1+S_2+S_3,
\end{equation*}
where 
$$
S_3 :=  \sum_{ \frac{x}{q+1} < p \le \frac{x}{q}}\frac{1}{p} =  \frac{\log(1+1/q)}{\log x} +  O\left(\frac{1}{(\log x)^2}\right),
$$
by the prime number theorem. With Lemma \ref{BpAp}, we have
$$
S_2 :=  \sum_{U< p\le \sqrt{x}} \sum_{{n\in \mathcal{B}(x) \atop   P^-(n)=p} \atop \frac{x}{q+1} < n \le \frac{x}{q}} \frac{1}{n}
\le  \sum_{U< p\le \sqrt{x}}\frac{q+1}{x} A(x/p,p,p) \ll \frac{q+1}{(\log x) (\log U)},
$$
as in \eqref{S2UB}.
It remains to estimate 
\begin{equation*}
S_1:=  \sum_{q<p \le U} \sum_{{n\in \mathcal{B}(x) \atop   P^-(n)=p} \atop \frac{x}{q+1} < n \le \frac{x}{q}} \frac{1}{n}
= \sum_{q<p \le U} 
\sum_{\frac{x}{(q+1)p} < m \le \frac{x}{qp} \atop F(m)\le x, \ P^-(m)\ge p}\frac{1}{pm}.
\end{equation*}
The innermost sum equals
\begin{multline*}
-\int_{qp}^{(q+1)p} \frac{y}{px} dA(x/y,y,p-1)  \\
= -  \frac{y}{px} A(x/y,y,p-1)  \Bigl|_{qp}^{(q+1)p}+\frac{1}{px}\int_{qp}^{(q+1)p} A(x/y,y,p-1) dy,
\end{multline*}
as a result of integration by parts. By \eqref{Axyyp}, this equals
$$
\frac{\mu_{q,p} }{\log x} +  O_q\left(\frac{1}{p(\log p) (\log x)^2}+\frac{ \log p }{p (\log x)^3}\right),
$$
where
$$
\mu_{q,p} := \frac{1}{p}\left( a_{qp,p-1} - a_{(q+1)p,p-1} + \int_{qp}^{(q+1)p} \frac{a_{y,p-1}}{y} dy \right),
$$
provided $U\le x^{1/6}$.
Note that $\mu_{q,p}\ll_q \frac{1}{p\log p}$ for each prime $p$, 
as the innermost sum in the definition of $S_1$ amounts to $<\frac{(q+1)}{x}A(x/p,p,p)$, as in the case of $S_2$.
Summing over primes $p$ with $q<p\le U$, the contribution from the error term is 
$$\ll_q \frac{1 }{(\log x)^2} + \frac{ \log U}{(\log x)^3},$$ 
while the main term contributes
$$
\frac{\mu_q - \log(1+1/q)}{\log x} + O_q\left( \sum_{p>U} \frac{\mu_{q,p} }{\log x}\right) =\frac{\mu_q - \log(1+1/q)}{\log x} + O_q\left(  \frac{1}{(\log x)(\log U)}\right).
$$
The result now follows with $U=x^{1/6}$. 

The numerical calculation of $\mu_q$ is based on formulas  \eqref{muqdef} and \eqref{adef2}. The details are given in Section \ref{secnummuq}.
\end{proof}

\section{Proof of Theorems \ref{thmHasy} and \ref{thmHasy2} }\label{secproofHasy}

\begin{proof}[Proof of Theorem \ref{thmHasy}]
Let $ \tau <1$. It is easy to see that each $n\le x$ is either a multiple of a member of $\mathcal{B}_\tau(x)$, or a member of $\mathcal{A}(\tau x)$, but not both.
We write
 $$
\sum_{n\in \mathcal{B}_\tau(x)} \frac{1}{n} = \sum_{n\in \mathcal{B}(\tau x)} \frac{1}{n} + \sum_{\tau x< p \le x} \frac{1}{p}=S_1+S_2,
$$
say.
Note that $\tau \in \mathcal{T}(x)$ implies $\tau x\to \infty$, or else $S_2>1$. 
Thus $S_1 \sim 1$, by Theorem \ref{corBrec}. 
It follows that $S_2 = o(1)$ and $\log \tau = o(\log x)$. 
By Theorem \ref{corBrec} and the prime number theorem,
$$
\sum_{n\in \mathcal{B}_\tau(x)} \frac{1}{n}
= 1-\frac{\delta}{\log \tau x} + \frac{\log 1/\tau}{\log x} + O\left(\frac{1}{(\log x)^{3/2}}+ \frac{\log^2 1/\tau}{\log^2 x} \right).
$$
This implies that
$$
\tau_0(x) := \min \mathcal{T}(x) = e^{-\delta} +O(1/\sqrt{\log x})
$$
and, by Theorem \ref{thm1},
$$H^*(x) = A(x\tau_0(x))= \frac{a e^{-\delta} x}{\log x} + O\left(\frac{x}{(\log x)^{3/2}}\right). $$
\end{proof}

\begin{proof}[Proof of Theorem \ref{thmHasy2}]
As in the proof of Theorem \ref{thmHasy}, we find that, for fixed $\mu$, we have
$\tau_0(x,\mu):=\min \mathcal{T}(x,1+\mu/\log x) = e^{-\delta - \mu} + O(1/\sqrt{\log x})$. 

For the second claim we follow the proof of Ruzsa \cite[Thm. I]{Ru}.
For the upper bound we use again $H(x,z)\le H^*(x,z)$.
Since $S_2 \le z\le Z$,  any $\tau \in \mathcal{T}(x,z)$ must satisfy $x\tau \ge x^\eta$ for a suitable $\eta>0$ depending on $Z$ only. 
Thus $S_1=1+O(1/\log x)$ and we want to choose $\tau$ as small as possible such that 
$S_2 \le z - S_1$, that is 
$$
S_2 = \log \frac{\log x}{\log x\tau} +O(1/\log x) \le z-1 +O(1/\log x).
$$
Denoting this value of $\tau$ by $\tau_0$, it follows that $x\tau_0 \asymp  x^{e^{1-z}}$ and 
$$
H(x,z) \le A(x\tau_0) \ll \frac{x \tau_0}{\log x \tau_0} \asymp \frac{x^{e^{1-z}}}{\log x}.
$$ 
For the lower bound, we let $y:= H(x,z) \log x$ and we may assume that $y<x$, or else there is nothing to prove. 
Let $\mathcal{S}$ be the optimal set in the definition of $H(x,z)$. 
The primes in $(y,x]$ must belong to $\mathcal{S}$, with at most $y/\log x$ exceptions. Thus
$$
\sum_{n\in \mathcal S \atop n>y} \frac{1}{n} \ge \sum_{y<p\le x} \frac{1}{p} - \frac{y}{\log x} \cdot \frac{1}{y} 
= \log \frac{\log x}{\log y} + O\left(\frac{1}{\log x}\right). 
$$
As before, we must have $y\ge x^\eta$ for a suitable $\eta>0$, or else the last sum is too large. 
Let $U(y,\mathcal{S}):=|\{n\le y: s\in \mathcal{S} \Rightarrow s \nmid n\}|.$ Then 
$U(y,\mathcal{S})+ \sum_{n\in \mathcal{S}} \lfloor y/n \rfloor \ge \lfloor y \rfloor$. Thus
$$
\sum_{n \in \mathcal{S} \atop n\le y} \frac{1}{n} \ge 1 - \frac{1+U(y,\mathcal{S})}{y} \ge 1-\frac{1+H(x,z)}{y} \ge 1-O\left(\frac{1}{\log x}\right).
$$
Thus,
$$
z \ge \sum_{n\in \mathcal{S}} \frac{1}{n} \ge 1 +  \log \frac{\log x}{\log y} + O\left(\frac{1}{\log x}\right),
$$
that is
$
y \gg x^{e^{1-z}},
$
which is the desired lower bound.  
\end{proof}

\section{The numerical calculation of $\beta$}\label{secnumbeta}

Let 
$$
\eta(t) =  \Sigma(t) + \gamma - \log t,
$$
\begin{equation}\label{epmxdef}
\mathcal{E}^+(x)=\sup_{t\ge x} \eta(t), \quad 
\mathcal{E}^-(x)= \inf_{t\ge x}\eta(t), \quad
\mathcal{E}(x)=\sup_{t\ge x} |\eta(t)|.
\end{equation}

From \eqref{betadef} and \eqref{adef2} we have
$$
\beta_p    =\frac{C\Pi(p-1)}{p} Q_p + \frac{C}{p} R_p,
$$
where
$$
Q_p = \int_{p}^{p^2} \frac{ 1 -\gamma +\log y -\Sigma(p-1)  }{y}dy  - \sum_{k=2}^p   \frac{ 1 -\gamma +\log kp -\Sigma(p-1)}{k}
$$
and $R_p$ is given by \eqref{Rpdef}. We evaluate the integral and write the result as
$$
Q_p =  (1 - \gamma +\log p -\Sigma(p-1))  \left(\log p - \sum_{k=2}^p \frac{1}{k}\right)
+\frac{\log^2 p}{2} - \sum_{k=2}^p \frac{\log k}{k}.
$$
When $p$ is large, we estimate $Q_p$ as 
$$
Q_p =(1-\eta(p-0))(1-\gamma  -\varepsilon_p) -\gamma_1  -\xi_p, 
$$
where $0\le \varepsilon_p \le \frac{1}{2p}$, $\gamma_1$ is the Stieltjes constant
$$
\gamma_1 := \lim_{x\to \infty} \left(\sum_{k=2}^x \frac{\log k}{k} - \frac{\log^2 x}{2} \right) =-0.072815845483676724860...
$$
and, by Euler summation,
$$
0\le \xi_p := \int_p^\infty \{x\} \frac{\log x -1}{x^2} dx \le \frac{\log p}{p}.
$$
We want to estimate
$$
Q:=\sum_{p\ge 2} \frac{C Q_p \Pi(p-1)}{ p } = \sum_{2 \le p\le N}+ \sum_{p>N}.
$$
We calculate the first sum directly. For the second sum, 
since
$
\sum_{p>N} \frac{\Pi(p-1)}{p} = \Pi(N),
$
we have 
$$
\sum_{p>N} \frac{C Q_p \Pi(p-1)}{ p } = C \Pi(N)\Bigl((1-\delta^*_N) (1-\gamma  -\varepsilon^*_N) -\gamma_1 -\xi^*_N\Bigr),
$$
where $ \mathcal{E}^-(N)\le \delta^*_N \le \mathcal{E}^+(N)$, $0 \le \varepsilon^*_N \le \frac{1}{2N}$, $0\le \xi^*_N \le \frac{\log N}{N}$. 
With $N=2^{36}$ and \cite[Table 1]{CFP} we obtain
 $$0.4232907784<Q<0.4232910253.$$

It remains to estimate 
$$
R:=\sum_{p\ge 2} \frac{C R_p}{ p } = \sum_{2 \le p\le N}+ \sum_{p>N},
$$
where $R_p$ equals
\begin{equation}\label{Rpdef}
R_p=\int_p^\infty \sum_{k=2}^p  \left(\int_{(k-1)p}^{kp}\Bigl( A(t/y,y,p-1)  - A(t/kp,kp,p-1)\Bigr)dy \right) g(t) \frac{dt}{t^2} .
\end{equation}
Thus,
$$
R_p=\int_p^\infty \sum_{k=2}^p  \left(\int_{(k-1)p}^{kp} 
\sum_{\frac{t}{kp} < n \le \frac{t}{y} \atop F(n)\le t, \ P^-(n) \ge p}  1 \ dy \right) g(t) \frac{dt}{t^2} .
$$
Switching the order of the inner sum and integral, we get
$$
R_p=\int_p^\infty \sum_{k=2}^p  \left(
\sum_{\frac{t}{kp} < n \le \frac{t}{(k-1)p} \atop F(n)\le t, \ P^-(n) \ge p}\left( \frac{t}{n}-(k-1)p\right) \right) g(t) \frac{dt}{t^2} .
$$
Note that for $n$ in the given range, $k-1 = \lfloor \frac{t}{np} \rfloor$, and hence $\frac{t}{n} - (k-1)p = p\{ \frac{t}{np} \}$.
Thus,
$$
R_p= p \int_p^\infty 
\sum_{\frac{t}{p^2} < n \le \frac{t}{p} \atop F(n)\le t, \ P^-(n) \ge p}\left\{ \frac{t}{np}\right\} g(t) \frac{dt}{t^2} .
$$
Since $\{u\}<1$, Corollary \ref{Acor1} yields
$$
R_p = \eta^*(p) \, p \int_p^\infty (A(t/p,p,p-1)-A(t/p^2,p^2,p-1)) \Pi(t) \frac{dt}{t^2} = \eta^*(p)(1-1/p)\Pi(p-1),
$$
where $\mathcal{E}^-(p)\le  \eta^*(p) \le \mathcal{E}^+(p)$. 
This shows that the contribution from $p>N$ satisfies
$$
C  \mathcal{E}^-(N) \Pi(N)  \le \sum_{p>N} \frac{C R_p }{ p } \le C  \mathcal{E}^+(N) \Pi(N) .
$$
When $p\le N$, the contribution to $R_p$ from $t>N$ is
$$
=\delta^*(N) \,  p \int_N^\infty (A(t/p,p,p-1)-A(t/p^2,p^2,p-1))  \Pi(t) \frac{dt}{t^2} = \delta^*(N) \varepsilon(N,p),
$$
where $  \mathcal{E}^-(N)  \le  \delta^*(N)\le   \mathcal{E}^+(N)$ and, by Corollary \ref{Acor1},
$$
\varepsilon(N,p) =(1-1/p) \Pi(p-1) - p \int_p^N ( A(t/p,p,p-1)-A(t/p^2,p^2,p-1))  \Pi(t) \frac{dt}{t^2} ,
$$
the exact value of which can be found on a computer. When $p\le N$, the contribution to $R_p$ from $t\le N$ is 
$$
\tilde{R}_p:= p \int_p^N 
\sum_{\frac{t}{p^2} < n \le \frac{t}{p} \atop F(n)\le t, \ P^-(n) \ge p}\left\{ \frac{t}{np}\right\} g(t) \frac{dt}{t^2} .
$$
We switch the order of the sum and integral and break up the integral into unit intervals to get
$$
\tilde{R}_p=p\sum_{n\le N/p \atop P^-(n)\ge p} \sum_{j=\max(F(n),np)}^{\min(N-1,np^2-1)} \int_j^{j+1}\left(\frac{t}{np} - \left\lfloor \frac{j}{np}\right\rfloor \right)
\left(\Sigma(j)+\gamma-\log t\right) \Pi(j) \frac{dt}{t^2},
$$
which can be evaluated with antiderivatives to obtain exact values.
Thus, for $p\le N$, 
$$
\tilde{R}_p +  \mathcal{E}^-(N) \varepsilon(N,p) < R_p < \tilde{R}_p +   \mathcal{E}^+(N) \varepsilon(N,p).
$$
Combining everything we get  
$$
 R< C \sum_{p\le N} \frac{\tilde{R}_p}{p}+  C \mathcal{E}^+(N) \left(  \sum_{p\le N} \frac{\varepsilon(N,p)}{p}  + \Pi(N)\right)
$$
and
$$
 R> C \sum_{p\le N} \frac{\tilde{R}_p}{p}+  C \mathcal{E}^-(N) \left(  \sum_{p\le N} \frac{\varepsilon(N,p)}{p}  + \Pi(N)\right)
$$

We use the fact that for $2\le t \le 2^{38}$, $\eta(t)>0$. As a result, we have 
\begin{equation}\label{Eminus}
\mathcal{E}^-(x)\ge -\mathcal{E}(2^{38}) \ge -0.00000305 \qquad (x \ge 2),
\end{equation}
by \cite[Table 1]{CFP}.
With $N=2^{21}$, $\mathcal{E}^+(N)\le 0.00105$ and \eqref{Eminus} we obtain 
$$
0.131313700<R<0.131514383.
$$
With the estimate for $Q$ we obtain
$$
0.554604 <\beta = Q+R < 0.554806.
$$

\section{The numerical calculation of $\mu_q$}\label{secnummuq}

Recall the notation \eqref{epmxdef}.
From \eqref{muqdef} we have 
\begin{equation*}
\mu_q = \log(1+1/q) + \sum_{p > q} \mu_{q,p},
\end{equation*}
where
\begin{equation}\label{muqp}
\mu_{q,p} = \frac{1}{p} \left( a_{qp,p-1}-a_{(q+1)p,p-1} + \int_{qp}^{(q+1)p} a_{y,p-1} \frac{dy}{y} \right).
\end{equation}
We insert the formula \eqref{adef2}, that is, 
\begin{equation}\label{ayzrem}
 a_{y,z}=C\Pi(z) \left( 1 -\gamma +\log y -\Sigma(z)\right)+  C\int_1^\infty A(t,y,z) g(yt) \frac{dt}{t^2}  .
\end{equation}
The contribution from the term $C\Pi(z) \left( 1 -\gamma +\log y -\Sigma(z)\right)$ in \eqref{ayzrem} to $\mu_{q,p}$ is $\frac{C \Pi(p-1)}{ p} Q_{q,p}$, where
\begin{multline*}
Q_{q,p}= \log \frac{q}{q+1} + \int_{qp}^{(q+1)p} (1-\gamma+\log y -\Sigma(p-1))\frac{dy}{y}\\
= \frac{1}{2}\left( \log^2(q+1)-\log^2 q\right)+\log(1+1/q) (\log p - \gamma -\Sigma(p-1)).
\end{multline*}
Thus, the contribution from the first term in \eqref{ayzrem} to $\mu_q$ is
$$
\sum_{p>q} \frac{C \Pi(p-1)}{ p} Q_{q,p} = \frac{C}{2}\left( \log^2(q+1)-\log^2 q\right)\Pi(q)+ C\log(1+1/q)S_q
$$
where
$$
S_q= \sum_{p>q} \frac{\Pi(p-1)}{p} (\log p -\gamma -\Sigma(p-1)) = \sum_{q< p \le N} \frac{\Pi(p-1)}{p} (\log p -\gamma -\Sigma(p-0)) + E(N).
$$
The last sum can be calculated on a computer and the error term satisfies
$$
- \Pi(N) \mathcal{E}^+(N) \le E(N)\le -\Pi(N) \mathcal{E}^-(N).
$$
Multiplying by $C \log (1+1/q)$ we obtain
\begin{equation}\label{muqe1}
 -C \log (1+1/q)  \Pi(N) \mathcal{E}^+(N) \le C \log (1+1/q)E(N) \le -C \log (1+1/q)  \Pi(N) \mathcal{E}^-(N).
\end{equation}
It remains to estimate the contribution form the integral term in \eqref{ayzrem} to $\mu_q$.  We write its contribution to $\mu_{q,p}$ as $\frac{C}{p}R_{q,p}$, where
\begin{multline*}
R_{q,p} = \int_1^\infty  A(t, qp,p-1) g(qpt)\frac{dt}{t^2} -\int_1^\infty A(t,(q+1)p,p-1) g((q+1)pt)\frac{dt}{t^2}\\
+ \int_{qp}^{(q+1)p} \int_1^\infty A(t,y,p-1) g(yt) \frac{dt}{t^2} \frac{dy}{y}.
\end{multline*}
After a change of variables this is
\begin{multline*}
R_{q,p} = pq \int_1^\infty  A(t/(pq), qp,p-1) g(t)\frac{dt}{t^2} \\-p(q+1)\int_1^\infty A(t/(p(q+1)),(q+1)p,p-1) g(t)\frac{dt}{t^2}
\\+ \int_{qp}^{(q+1)p} \int_1^\infty A(t/y,y,p-1) g(t) \frac{dt}{t^2}dy.
\end{multline*}
We rewrite this as
\begin{multline*}
R_{q,p} = pq \int_1^\infty  \Bigl[A(t/(pq), qp,p-1) -A(t/(p(q+1)),(q+1)p,p-1)  \Bigr] g(t)\frac{dt}{t^2} \\
+ \int_1^\infty \int_{qp}^{(q+1)p} \Bigl[ A(t/y,y,p-1)-  A(t/(p(q+1)),(q+1)p,p-1)  \Bigr] dy \,  g(t) \frac{dt}{t^2}.
\end{multline*}
The integral over $y$ is
$$
\int_{qp}^{(q+1)p} \sum_{\frac{t}{p(q+1)}<m\le \frac{t}{y} \atop F(m)\le t, \ P^-(m)\ge p} 1 \  dy
= \sum_{\frac{t}{p(q+1)}<m\le \frac{t}{pq} \atop F(m)\le t, \ P^-(m)\ge p} (t/m - pq) .
$$
It follows that
$$
R_{q,p} = \int_{qp}^\infty \sum_{\frac{t}{(q+1)p} < m \le \frac{t}{qp} \atop F(m) \le t, \ P^-(m) \ge p} \frac{t}{m} g(t) \frac{dt}{t^2} =\int_{qp}^N + \int_N^\infty.
$$
For $p\le N/q$, we write
$$
R^*_{q,p}:=\int_{qp}^N = \sum_{P^-(m) \ge p \atop m \le \frac{N}{qp}} \frac{1}{m} \int_{\max(mqp,F(m))}^{\min(m(q+1)p,N)} g(t) \frac{dt}{t},
$$
which we calculate on a computer more efficiently by first storing values of $\int_1^n g(t) \frac{dt}{t}$ in a table. 
The contribution of this main term from $p\le N/q$ to $\mu_q$ is
$$
\sum_{p\le N/q} \frac{C R^*_{q,p}}{p}.
$$
Since $\frac{t}{m}\le (q+1)p$,  the tail of the integral satisfies
\begin{multline*}
 \int_N^\infty
= p (q+1)\eta^*_{q,p}(N) \int_N^\infty (A(t/pq,pq,p-1)-A(t/p(q+1),p(q+1),p-1)) \Pi(t) \frac{dt}{t^2}\\ 
=: p (q+1) \eta^*_{q,p}(N)\varepsilon_{q,p}(N),
\end{multline*}
where $ \mathcal{E}^-(N) \le  \eta^*_{q,p}(N) \le  \mathcal{E}^+(N)$.
By Corollary \ref{Acor1}, $\varepsilon_{q,p}(N)$ equals
$$
\frac{\Pi(p-1)}{pq}-\frac{\Pi(p-1)}{p(q+1)} -  \int_1^N (A(t/pq,pq,p-1)-A(t/p(q+1),p(q+1),p-1)) \Pi(t) \frac{dt}{t^2},
$$
which we can find on a computer, similar to the calculation of $R^*_{q,p}$ above.
 Thus the error $E_{\text{tail}}$ for $\mu_q$ from the tail of the integral and $p\le N/q$ amounts to 
\begin{equation}\label{muqe2}
 C (q+1) \mathcal{E}^-(N) \sum_{p\le N/q} \varepsilon_{q,p}(N) \le E_{\text{tail}} \le  C (q+1) \mathcal{E}^+(N) \sum_{p\le N/q} \varepsilon_{q,p}(N).
\end{equation}

For $p>N/q$ we have the estimate (again by Cor. \ref{Acor1} and since $t/m \le (q+1)p$),
$$
R_{q,p}\le \mathcal{E}^+(N) p(q+1) \left(\frac{\Pi(p-1)}{pq}-\frac{\Pi(p-1)}{p(q+1)} \right)=\frac{\mathcal{E}^+(N)\Pi(p-1)}{q},
$$
and, similarly,
$$
R_{q,p}\ge \frac{\mathcal{E}^-(N)\Pi(p-1)}{q}.
$$
Thus, the contribution from $p>N/q$ to $\mu_q$ is
\begin{equation}\label{muqe3}
 \frac{C \Pi(N/q) \mathcal{E}^-(N)}{q} \le  \sum_{p>N/q} \frac{C R_{q,p}}{p} \le \frac{C \Pi(N/q) \mathcal{E}^+(N)}{q}.
\end{equation}
The total error can be bounded by combining \eqref{muqe1}, \eqref{muqe2}, and \eqref{muqe3}.

With \cite[Table 1]{CFP}, the estimates for $\mu_5$ and $\mu_{11}$ in Proposition \ref{Rprop}  
now follow with $N=2^{36}$ in \eqref{muqe1}, while $N=2^{21}$ in  \eqref{muqe2} and \eqref{muqe3}.

\section*{Acknowledgments}
The author is grateful for many helpful suggestions provided by Eric Saias and by the anonymous referee.

\end{document}